\newtheorem{dummy}{Dummy}[section]
\newtheorem{proposition}[dummy]{Proposition}
\newtheorem{lemma}[dummy]{Lemma}
\newtheorem{theorem}[dummy]{Theorem}
\newtheorem*{theorem*}{Theorem}
\newtheorem{corollary}[dummy]{Corollary}
\newtheorem{definition}[dummy]{Definition}
\newtheorem{remark}[dummy]{Remark}
\newdefinition{example}[dummy]{Example}
\newdefinition{examples}[dummy]{Examples}
\newcolumntype{C}{>{\centering\arraybackslash}X}
\newcolumntype{P}[1]{>{\centering\arraybackslash}p{#1}}
\tikzset{my loop/.style =  {to path={
  \pgfextra{}
  [looseness=12,min distance=10mm]
  \tikz@to@curve@path},font=\sffamily\small
  }}  
\def\ps@pprintTitle{%
  \let\@oddhead\@empty
  \let\@evenhead\@empty
  \def\@oddfoot{\reset@font\hfil\thepage\hfil}
  \let\@evenfoot\@oddfoot
}
\begin{document}

\begin{frontmatter}

\title{On the lattice of fuzzy rough sets \tnoteref{fr_title}}

\author[1]{Dávid Gégény\corref{cor_gd}\fnref{fund}}
\ead{david.gegeny@uni-miskolc.hu}
\ead[url]{http://www.uni-miskolc.hu/~matgd/}

\author[1]{Sándor Radeleczki\fnref{fund}}
\ead{sandor.radeleczki@uni-miskolc.hu}
\ead[url]{http://www.uni-miskolc.hu/~matradi/}

\address[1]{Institute of Mathematics, University of Miskolc, 3515 Miskolc-Egyetemváros, Hungary}

\cortext[cor_gd]{Corresponding author}

\fntext[fund]{\textsc{Supported by the ÚNKP-22-4 New National Excellence Program of the Ministry for Innovation and Technology from the source of the National Research, Development and Innovation Fund.}}

\begin{abstract} By the means of lower and upper fuzzy
approximations we define quasiorders. Their properties are used to prove our main results. First, 
we characterize those pairs of fuzzy sets which form fuzzy rough sets w.r.t. a t-similarity relation $\theta$ on $U$, for certain t-norms and implicators. Then we establish conditions under which fuzzy rough sets form lattices. We show that for the $\min$ t-norm and any
S-implicator defined by the $\max$ co-norm with an involutive negator, the fuzzy rough
sets form a complete lattice, whenever $U$ is finite or the range of
$\theta$ and of the fuzzy sets is a fixed finite chain.  
\end{abstract}






\begin{keyword}
Fuzzy rough sets; Fuzzy relations; Lower and upper approximation; Self-dual poset 
\end{keyword}

\end{frontmatter}

\section{Introduction}
\label{sec:intro}

Rough sets were introduced by Zdzis{\l}aw Pawlak \cite{Pawlak1}, by defining the lower and upper approximations of a (crisp) set based on a so-called indiscernibility relation of the elements. 
 Originally, Pawlak assumed that this relation is an
equivalence, but later several other types of relations were also examined
(see e.g. \cite{JR1}, \cite{JRV09}, or \cite{JPR}, \cite{YY}).
For a relation $\varrho \subseteq U\times U$ and any element $u\in
U$, denote $\varrho(u):=\{x\in
U\mid(u,x)\in R\}$. Now, for any subset $A\subseteq U$, the \emph{lower
approximation} of $A$ is defined as
\[
A_{\varrho}:=\{x\in U\mid \varrho(x)\subseteq A\},
\]
and the \emph{upper approximation} of $A$ is given by
\[
A^{\varrho}:=\{x\in U\mid \varrho(x)\cap A\neq\emptyset\}.
\]

\noindent If $\varrho$ is reflexive and transitive, i.e. it is a
\emph{quasiorder}, then the properties $A_{\varrho}\subseteq A\subseteq A^{\varrho}$ and
$(A_{\varrho})_{\varrho}=A_{\varrho}$, $\left(  A^{\varrho}\right)  ^{\varrho}=A^{\varrho}$ hold for all
$A\subseteq U$.
\medskip
\noindent  The rough sets induced by $ \varrho $ can be ordered w.r.t. the component-wise inclusion, and for an equivalence, or more generally, for a quasiorder $ \varrho $, they form a complete distributive lattice  with several particular properties, see e.g. \cite{JRV09} or \cite{Po88}.

The notion of a fuzzy set was introduced by Lotfi Zadeh \cite{Zh}.  A fuzzy set is defined by a mapping $ f: U \rightarrow [0, 1]$. We say that $f$ has a \emph{finite range}, whenever
the (crisp) set $\{f(x)\mid x\in U\}$ is finite. The collection of all fuzzy
sets on $U$ is denoted by $\mathcal{F}(U)$. Ordering any elements
$f,g\in\mathcal{F}(U)$ as follows%
\[
f\leq g\Leftrightarrow f(x)\leq g(x)\text{, for all }x\in U\text{,}%
\]

\noindent we obtain a completely distributive (complete) lattice
$\mathcal{F}(U)$. For any system $f_{i}\in\mathcal{F}(U)$, $i\in I$, its
infimum and supremum are given by the formulas%
\begin{equation}
\left(  \bigwedge_{i\in I}f_{i}\right)  (x)=\bigwedge_{i\in I}f_{i}(x)\text{;
}\left(  \bigvee_{i\in I}f_{i}\right)  (x)=\bigvee_{i\in I}f_{i}(x)\text{,}
\tag{1}%
\end{equation}

\noindent where $\bigwedge$ and $\bigvee$ denote the infimum and the supremum, respectively, in the complete lattice $([0,1],\leq)$.

The first step to integrate the two main theories relates to the works of 
del Cerro and Prade \cite{CP}, Nakamura \cite{NK} and Dubois and Prade \cite{DP}. In \cite{DP} the fuzzy rough sets are defined as pairs $(\underline{f},\overline{f}) \in\mathcal{F}(U)\times
\mathcal{F}(U)$ of lower and upper approximations of the fuzzy sets $f \in \mathcal{F}(U)$.  These fuzzy approximations were defined by using a similarity relation, the t-norm min and conorm max.  Their approach was generalized in several papers, like \cite{CCK, GMS, HYPC, In, Pei, RK, WLM, WMZ} and \cite{DVCG}, where fuzzy rough sets are defined on the basis of different t-norms (or conjunctors) and related implicators.
A detailed study of these approximation operators was developed in \cite{DVCG}, \cite{RK} and in \cite{MH}, \cite{RKL}, where the structure of the lower and upper approximations of $L$-fuzzy sets is also investigated. An axiomatic approach of these properties was elaborated e.g. in \cite{BJR}, \cite{Liu}, \cite{MZ},  and \cite{MY}. In \cite{GKR} it was shown that for crisp reference sets (i.e. for
$f(x)\in\{0,1\}$, $\forall x\in U$) the fuzzy rough sets defined by a
t-similarity relation $\theta$ with a well-ordered spectrum form a completely distributive lattice. 

The goal of the present paper is to find conditions under which fuzzy rough sets form
lattices. With this purpose, by the means of lower and upper fuzzy
approximations we define (crisp) quasiorders on $U$. The properties of these
quasiorders and of the equivalences determined by them are discussed
in Sections \ref{sec:quasiorders}, \ref{sec:eqbyqo} and \ref{sec:propofeqc}. These properties will be used to prove our main results, Theorems \ref{thm:frs} and \ref{thm:lattice}. Section \ref{sec:prelim} contains the essential prerequisites of our study. In Section \ref{sec:frs} by using singleton equivalence classes,
we characterize those pairs of fuzzy sets which form a fuzzy rough set with
respect to a t-similarity relation $\theta$ for certain t-norms and related implicators. In Section \ref{sec:latticefrs}, we
establish conditions under which fuzzy rough sets with a finite range form
lattices. For instance, we show that for the min t-norm and any
S-implicator defined by the max co-norm with an involutive negator, the fuzzy rough
sets form a complete lattice, whenever $U$ is finite, or whenever the range of
$\theta$ and of the fuzzy reference sets is a fixed finite chain $L\subseteq
\lbrack0,1]$.
\section{Preliminaries}
\label{sec:prelim}

\subsection{T-norms, implicators and fuzzy relations}
\medskip

\emph{A triangular norm} $\odot$ (\emph{t-norm} for short) is a commutative,
associative and monotone increasing binary operation $\odot$
defined on $[0,1]$ satisfying $1\odot x=x\odot1=x$, for all $x\in\lbrack0,1]$.
The t-norm $\odot$ is called \emph{(left) continuous,} if it is (left)
continuous as a function $\odot\colon\lbrack0,1]^{2}\rightarrow\lbrack0,1]$
in the usual interval topology on $[0,1]^{2}$. Every t-norm $\odot$ satisfies
$x\odot0=0\odot x=0$, for all $x\in L$. The most known t-norms are:

\noindent - the \emph{standard min operator}: $x\odot y:= \min(x,y)$; \\
- the \emph{arithmetical product}: $x\odot y:= x\cdot y$; \\
- the \emph{\L ukasiewicz t-norm} $x\odot y:= \max(0,x+y-1)$. \\
A \emph{negator} is a decreasing map $n\colon\lbrack0,1]\rightarrow
\lbrack0,1]$ with $n(0)=1$ and $n(1)=0$. $n$ is called
\emph{involutive} if $n(n(x))=x$, for all $x\in\lbrack0,1]$ (see e.g. \cite{Fodor}). The so-called
\emph{standard negator} $n(x):=1-x$, $x\in\lbrack0,1]$ is an involutive negator. 

\emph{A triangular conorm} $\oplus$ (shortly \emph{t-conorm}) is a
commutative, associative and monotone increasing binary operation $\oplus$
defined on $[0,1]$, that satisfies $0\oplus x=x\oplus0=x$, for all
$x\in\lbrack0,1]$. The t-conorm $\oplus$ is \emph{(left) continuous,} if it is
(left) continuous as a function $\oplus\colon\lbrack0,1]^{2}\rightarrow
\lbrack0,1]$ in the usual topology.

Given an involutive negator $n$, a t-norm $\odot$ and a t-conorm $\oplus$, we
say that $\odot$ and $\oplus$ form an $n$\emph{-dual pair} if for all $x,y \in [0,1]$%
\[
n(x\oplus y)=n(x)\odot n(y)\text{.}%
\]

\noindent Clearly, this identity also implies the identity%
\[
n(x\odot y)=n(x)\oplus n(y)\text{.}%
\]

\noindent For instance, min$(x,y)$, max$(x,y)$ form a well-known $n$-dual pair
w.r.t. any involutive negator on $[0,1]$.

An \emph{implicator} is a binary operation (mapping) $\vartriangleright
\colon\lbrack0,1]^{2}\rightarrow\lbrack0,1]$ that is decreasing in the
first and increasing in the second argument and that satisfies the boundary conditions%

\[
0\vartriangleright0=0\vartriangleright1=1\vartriangleright1=1\text{ and
}1\vartriangleright0=0\text{.}%
\]

\noindent$\vartriangleright$ is called a \emph{border implicator} if
$1\vartriangleright x=x$ holds for all $x\in\lbrack0,1]$. There are two
important classes of border implicators. The \emph{R-implicato}r based on a
t-norm $\odot$ is defined by
\[
x\vartriangleright y:=\bigvee\{z\in\lbrack0,1]\mid x\odot z\leq y\}\text{, for
all }x,y\in\lbrack0,1].
\]

\noindent If $\odot$ is a
continuous t-norm, then the algebra $([0,1],\vee,\wedge,\odot,\vartriangleright
,0,1)$ is a so-called \emph{commutative (integral) residuated lattice} (see \cite{GJ}).
Let $\oplus$ be a t-conorm and $n$ a negator on $[0,1]$, then the
\emph{S-implicato}r based on them is defined by
\[
x\vartriangleright y:=n(x)\oplus y
\]

\noindent The\emph{ \L ukasiewicz implicator} $\vartriangleright
_{L}$ is both an R-implicator and S-implicator defined by $x$
$\vartriangleright_{L}y:=$ min$(1,1-x+y)$, $ \forall x\in\lbrack0,1]$. The
\emph{Kleene-Dienes (KD) implicator} $\vartriangleright_{KD}$\ is an
S-implicator given by $x\vartriangleright_{KD}y$ $:=\ $max$(1-x,y)$,
$ \forall x\in\lbrack0,1]$. If $\vartriangleright$ is an implicator, then a
corresponding negator is defined by $n(x)=x\vartriangleright0$.
If $n$ is an involutive negator and $\vartriangleright$ is an
R-implicator defined by a left-continuous t-norm, then
$\vartriangleright$ is called an \emph{ITML-implicator.}

\medskip

\noindent A \emph{fuzzy binary relation} on $U$ is a fuzzy set $\theta\colon
U\times U\rightarrow\lbrack0,1]$. The pair $(U,\theta)$ is usually called a
\emph{fuzzy approximation space}. $\theta$\ is called \emph{reflexive} if
$\theta(x,x)=1$ for all $x\in U$, and it is called \emph{symmetric} if for all
$x,y\in U$, $\theta(x,y)=\theta(y,x)$. Given a t-norm $\odot$, the relation
$\theta$ is called $\odot$\emph{-transitive} if
\[
\theta(x,y)\odot\theta(y,z)\leq\theta(x,z)
\]

\noindent holds for every $x,y,z\in U$. If a relation $\theta$ is reflexive
and $\odot$-transitive, then it is called a (fuzzy)\emph{ }$\odot
$\emph{-quasiorder.} A symmetric $\odot$-quasiorder\emph{ }$\theta$ is called
a (fuzzy) $\odot$\emph{-similarity relation}. When $x\odot y=$ min$(x,y)$,
then $\theta$ is simply named a \emph{similarity relation}. Since the minimum
t-norm is the largest t-norm, a similarity relation is always $\odot
$-transitive for any t-norm $\odot$. We say that $\theta$ is of a \emph{finite
range}, if the (crisp) set $\{\theta(x,y)\mid x,y\in U\}$ of its values is finite.

\subsection{Fuzzy rough sets}

\noindent Let $(U,\theta)$ be a fuzzy approximation space with a relation $\theta\colon U\times U\rightarrow\lbrack0,1]$. The precise notion of a fuzzy
rough set was introduced by D. Dubois and H. Prade in  \cite{DP}. They defined for any fuzzy set $f\in\mathcal{F}(U)$ its \emph{lower}
\emph{approximation} $\underline{\theta}(f)$ and its \emph{upper
approximations }$\overline{\theta}(f)$ \emph{relative to }$\theta$ by the formulas%

\begin{center}

$\underline{\theta}(f)(x):=\bigwedge\{\text{max}(1-\theta(x,y),f(y))\mid y\in
U\}\text{, for all }x\in U\text{;} $%

$\overline{\theta}(f)(x):=\bigvee\{\text{min}(\theta(x,y),f(y))\mid y\in
U\}\text{, for all }x\in U\text{.}$

\end{center}

\noindent The \emph{fuzzy rough set of} $f$ is identified by the pair
$(\underline{\theta}(f),\overline{\theta}(f))\in\mathcal{F}(U)\times
\mathcal{F}(U)$ (see \cite{DP}). This definition was generalized in several papers.
Here we will use the approach based on implicators and t-norms from \cite{DVCG} and \cite{RK}. Hence, in what follows, let $\odot$ be a t-norm and
$\vartriangleright$ a border implicator on $[0,1]$.

\begin{definition}
If $(U,\theta)$ is a fuzzy approximation
space, then for any fuzzy set $f\in\mathcal{F}(U)$ its \emph{fuzzy lower
approximation} $\underline{\theta}(f)$ and its \emph{fuzzy upper
approximation} $\overline{\theta}(f)$ are defined as follows:%

\begin{equation}
\underline{\theta}(f)(x):=\bigwedge\{\theta(x,y)\vartriangleright f(y)\mid
y\in U\}\text{, for all }x\in U\text{.}\tag{2}%
\end{equation}

\begin{equation}
\overline{\theta}(f)(x):=\bigvee\{\theta(x,y)\odot f(y)\mid y\in U\}\text{,
for all }x\in U\text{.}\tag{2'}%
\end{equation}

\noindent The pair $(\underline{\theta}(f),\overline{\theta}(f))\in
\mathcal{F}(U)\times\mathcal{F}(U)$ is called a \emph{fuzzy rough set in}
$(U,\theta)$.
\end{definition}

\noindent This definition also includes the one of Dubois
and Prade, where $\odot$ is the min t-norm and $\vartriangleright$ is
the Kleene-Dienes implicator $x\vartriangleright_{KD}y$ $=\ $max$(1-x,y)$.

Notice that $\underline{\theta}$ and $\overline{\theta}$ are
\emph{order-preserving} operators, i.e. $ f\leq
g$ implies $\underline{\theta}(f)\leq\underline{\theta}(g)$ and $\overline
{\theta}(f)\leq\overline{\theta}(g)$. In addition, if $\theta$ is a reflexive
fuzzy relation, then $\underline{\theta}(f)\leq f\leq\overline{\theta}(f)$
holds for all $f\in\mathcal{F}(U)$ (see e.g. \cite{DVCG} or \cite{RK}) The following
properties will have a special importance in our proofs:

\medskip

(D) Let $\odot$ be a left-continuous t-norm such that its induced R-implicator
$\vartriangleright$ is an ITML implicator, i.e. $n(x):=x\vartriangleright0$,
$x\in U$ is an involutive negator, or let $n$ be an involutive negator, $\oplus$
a t-conorm $n$-dual to $\odot$ and $\vartriangleright$ the S-implicator
defined by them (i.e. $x\vartriangleright y=n(x)\oplus y$). Then
$n(\overline{\theta}(f))=\underline{\theta}(n(f))$ and $n(\underline{\theta
}(f))=\overline{\theta}(n(f))$ (see e.g. \cite{DVCG}, \cite{MH} or \cite{RK}).

(ID) Let $\odot$ be a left-continuous t-norm and $\vartriangleright$
the R-implicator induced by it, or $n$ an involutive negator, $\oplus$ a
t-conorm $n$-dual to $\odot$ and $\vartriangleright$ the S-implicator
corresponding to them. If $\theta$ is $\odot$-transitive, then for any
$f,g\in\mathcal{F}(U)$ we have $\overline{\theta}(\overline{\theta
}(f))=\overline{\theta}(f)$ and $\underline{\theta}(\underline{\theta
}(g))=\underline{\theta}(g)$ (see \cite{DVCG}, \cite{MH}, \cite{RK}). In other words, for
$F=\overline{\theta}(f)$ and $G=\underline{\theta}(g)$ we have $F=\overline
{\theta}(F)$ and $G=\underline{\theta}(G)$.

\begin{lemma}
\label{lem:frange}
Let $(U,\theta)$ be a fuzzy
approximation space such that the relation $\theta$ is of a finite
range. If $f\in\mathcal{F}(U)$ has a finite range, then the fuzzy sets
$\underline{\theta}(f)$ and $\overline{\theta}(f)$  are also of a
finite range.
\end{lemma}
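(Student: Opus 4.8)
The plan is to reduce both ranges to subsets of two finite sets built from the finite value sets of $\theta$ and of $f$. First I would set $S := \{\theta(x,y)\mid x,y\in U\}$ and $T := \{f(y)\mid y\in U\}$; by hypothesis both are finite. Since $\vartriangleright$ and $\odot$ are binary operations on $[0,1]$, the sets
\[
S\vartriangleright T := \{a\vartriangleright b\mid a\in S,\ b\in T\}\quad\text{and}\quad S\odot T := \{a\odot b\mid a\in S,\ b\in T\}
\]
are finite as well, with at most $|S|\cdot|T|$ elements each. Note that no special property of $\vartriangleright$ or $\odot$ is used here beyond their being maps $[0,1]^{2}\rightarrow[0,1]$.

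Next I would observe that for every fixed $x\in U$ each term $\theta(x,y)\vartriangleright f(y)$ occurring in (2) lies in $S\vartriangleright T$, and each term $\theta(x,y)\odot f(y)$ occurring in (2') lies in $S\odot T$. Hence the value sets $\{\theta(x,y)\vartriangleright f(y)\mid y\in U\}$ and $\{\theta(x,y)\odot f(y)\mid y\in U\}$, although indexed by the possibly infinite set $U$, are themselves finite, being subsets of $S\vartriangleright T$ and $S\odot T$ respectively.

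The key point is that the infimum of a nonempty finite subset of the chain $([0,1],\leq)$ is its least element and the supremum its greatest element; in particular these are attained and belong to the subset. Therefore $\underline{\theta}(f)(x)=\bigwedge\{\theta(x,y)\vartriangleright f(y)\mid y\in U\}\in S\vartriangleright T$ and $\overline{\theta}(f)(x)=\bigvee\{\theta(x,y)\odot f(y)\mid y\in U\}\in S\odot T$ for every $x\in U$. Consequently the range of $\underline{\theta}(f)$ is contained in the finite set $S\vartriangleright T$ and that of $\overline{\theta}(f)$ in the finite set $S\odot T$, so both have finite range.

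The only genuinely subtle step is recognizing that taking an infimum or supremum over the infinite index set $U$ does not create new values: since the underlying value set is finite, the inf/sup is realized as a minimum/maximum among the finitely many values that actually occur. I expect no real obstacle beyond stating this carefully, and tacitly assuming $U\neq\emptyset$, the empty case being trivial.
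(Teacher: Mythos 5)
Your proof is correct and takes essentially the same route as the paper: both arguments reduce everything to the finiteness of the sets of term values $\{\theta(x,y)\odot f(y)\mid x,y\in U\}$ and $\{\theta(x,y)\vartriangleright f(y)\mid x,y\in U\}$, which the paper calls $\mathcal{C}$ and $\mathcal{I}$ and you call $S\odot T$ and $S\vartriangleright T$. The only cosmetic difference is the final step: you observe that the infimum/supremum of a nonempty finite subset of $[0,1]$ is attained, so each value of $\underline{\theta}(f)$ and $\overline{\theta}(f)$ lies in the corresponding finite set, whereas the paper notes there are only finitely many possible subsets of the form $\{\theta(x,y)\vartriangleright f(y)\mid y\in U\}$ and $\{\theta(x,y)\odot f(y)\mid y\in U\}$, hence finitely many possible inf/sup values; both variants close the argument immediately.
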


\begin{proof}
Since $\{\theta(x,y)\mid x,y\in U\}$ and
$\{f(y)\mid y\in U\}$ are finite sets, their Cartesian product $\{(\theta
(x,y),f(y))\mid x,y\in U\}$ is finite, hence the sets $\mathcal{C}%
=\{\theta(x,y)\odot f(y)\mid x,y\in U\}$ and $\mathcal{I}=\{\theta
(x,y)\vartriangleright f(y)\mid x,y\in U\}$ are also finite. In particular,
this means that the sets $\mathcal{C}$ and $\mathcal{I}$ have finitely many
(different) subsets of the form $\{\theta(x,y)\vartriangleright f(y)\mid y\in
U\}$ and $\{\theta(x,y)\odot f(y)\mid y\in U\}$, and this immediately implies
that both $\underline{\theta}(f)$ and $\overline{\theta}(f)$ have finitely
many different values, i.e. they have finite ranges.
\end{proof}

\section{Quasiorders induced by lower and upper approximations}
\label{sec:quasiorders}

In what follows, suppose that conditions in (ID) hold and $n(x):=x\vartriangleright0$. For any $f,g\in\mathcal{F}(U)$, denote $F=\overline{\theta}(f)$ and $G=\underline
{\theta}(g)$.  Using $F$ and $G$ we define two binary relations $R(F)$ and
$\varrho(G)$ on $U$ as follows:

\begin{definition}
Let $(U,\theta)$ be a fuzzy approximation
space, $a,b\in U$ and $F=\overline{\theta}(f)$, $G=\underline{\theta}(g)$. Then

\noindent(i) $(a,b)\in R(F)\Leftrightarrow F(a)=$ $\theta(a,b)\odot F(b)$;

\noindent(ii) $(a,b)\in\varrho(G)\Leftrightarrow G(a)=$ $\theta
(a,b)\vartriangleright G(b)$.
\end{definition} 

\begin{proposition}
\label{prop:prop1}
(i) $(a,b)\in R(F)$ implies $F(a)\leq
\theta(a,b)$, and $(a,b)\in\varrho(G)$ implies $G(a)\geq n(\theta(a,b))$.

\noindent(ii) If $\theta$ is reflexive, then for any $f,g\in\mathcal{F}(U)$,
$R(F)$ and $\varrho(G)$ are reflexive.

\noindent(iii) If $\theta$ is a $\odot$-quasiorder, then $R(F)$, $\varrho(G)$ are
crisp quasiorders, and \\
\noindent $F(a)\geq\theta(a,y)\odot F(y)$, $G(a)\leq
\theta(a,y)\vartriangleright G(y)$, for any $a,y\in U$.

\noindent(iv) If $n(x)$ is involutive, then $R(F)=\varrho
(n(F))$, and $\varrho(G)=R(n(G))$.

\noindent(v) Let $\odot$ be the minimum t-norm, $n$ an involutive negator, and
$x\vartriangleright y:=$ max$(n(x),y)$. If $\theta$ is a similarity relation,
then $(a,b)\in R(F)\Leftrightarrow F(a)\leq\theta(a,b)$ and $(a,b)\in
\varrho(G)\Leftrightarrow G(a)\geq n(\theta(a,b))$.
\end{proposition}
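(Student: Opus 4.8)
The plan is to observe first that the forward implications of both equivalences are already the content of part (i): $(a,b)\in R(F)$ implies $F(a)\leq\theta(a,b)$, and $(a,b)\in\varrho(G)$ implies $G(a)\geq n(\theta(a,b))$. Hence only the two converses require work. Throughout I use that a similarity relation is in particular a symmetric $\odot$-quasiorder, so part (iii) applies and supplies the pointwise inequalities $F(a)\geq\theta(a,y)\odot F(y)$ and $G(a)\leq\theta(a,y)\vartriangleright G(y)$ for all $a,y\in U$; with $\odot=\min$ and $x\vartriangleright y=\max(n(x),y)$ these read $F(a)\geq\min(\theta(a,y),F(y))$ and $G(a)\leq\max(n(\theta(a,y)),G(y))$.

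For the first equivalence, suppose $F(a)\leq\theta(a,b)$; I must produce $F(a)=\min(\theta(a,b),F(b))$, which is precisely $(a,b)\in R(F)$. Taking $y=b$ in (iii) already gives $F(a)\geq\min(\theta(a,b),F(b))$, so it remains to establish the reverse. The decisive step is to apply (iii) to the reversed pair: with the roles of $a$ and $b$ interchanged, $F(b)\geq\min(\theta(b,a),F(a))$, and since $\theta$ is symmetric and $F(a)\leq\theta(a,b)=\theta(b,a)$, the right-hand side collapses to $F(a)$; hence $F(a)\leq F(b)$. Combining $F(a)\leq\theta(a,b)$ with $F(a)\leq F(b)$ yields $F(a)\leq\min(\theta(a,b),F(b))$, and equality follows.

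For the second equivalence I would rather not repeat the dual computation but reduce to the case just settled. By part (iv), $\varrho(G)=R(n(G))$, and by property (D), $n(G)=n(\underline{\theta}(g))=\overline{\theta}(n(g))$ is itself an upper approximation, so the first equivalence applies to $F':=n(G)$, giving $(a,b)\in R(n(G))\Leftrightarrow n(G(a))\leq\theta(a,b)$. Since $n$ is an involutive, hence order-reversing, negator, $n(G(a))\leq\theta(a,b)$ is equivalent to $G(a)\geq n(\theta(a,b))$; chaining these with (iv) yields $(a,b)\in\varrho(G)\Leftrightarrow G(a)\geq n(\theta(a,b))$. (Alternatively one can mimic the first argument verbatim, using the $y=b$ and reversed-pair instances of $G(a)\leq\max(n(\theta(a,y)),G(y))$ together with symmetry to extract $G(b)\leq G(a)$.)

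The main obstacle — indeed the only step that is not automatic — is obtaining the comparison $F(a)\leq F(b)$ (respectively $G(b)\leq G(a)$) between the two approximation values. This is exactly where symmetry of $\theta$ is indispensable: it is the reversed-pair instance of the inequality in (iii), combined with the standing hypothesis, that collapses $\min(\theta(a,b),F(a))$ to $F(a)$. Without symmetry these converse implications would break down, which explains why part (v) is stated specifically for similarity relations rather than for arbitrary $\odot$-quasiorders.
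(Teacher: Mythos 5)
Your proposal addresses only part (v) of the five-part proposition; parts (i)--(iv) are invoked as known facts but never proved. Part (i) you use for the forward implications, part (iii) for the pointwise inequalities $F(a)\geq\theta(a,b)\odot F(b)$ and $G(a)\leq\theta(a,b)\vartriangleright G(b)$, and part (iv) together with property (D) for your reduction in the second equivalence. This is where the genuine gap lies: (iii) and (iv) carry real content in the paper. Proving (iii) requires (ID), i.e.\ $F=\overline{\theta}(F)$ and $G=\underline{\theta}(G)$, to extract the pointwise bounds, plus a transitivity computation that must be run separately for the R-implicator case (using $x\vartriangleright(y\vartriangleright z)=(x\odot y)\vartriangleright z$) and the S-implicator case (using $n$-duality of $\odot$ and $\oplus$); proving (iv) requires property (D) and the identity $n(\theta(a,b)\odot F(b))=\theta(a,b)\vartriangleright n(F(b))$, again checked in both cases. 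There is no circularity --- (v) may legitimately rest on (i)--(iv), and the paper's own proof of (v) cites (i) and (iii) --- but as a proof of the stated proposition yours is incomplete.

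For part (v) itself your argument is correct. The first equivalence is established exactly as in the paper: $y=b$ in (iii) gives $F(a)\geq\min(\theta(a,b),F(b))$, while the reversed-pair instance $F(b)\geq\min(\theta(b,a),F(a))$, symmetry of $\theta$, and the hypothesis $F(a)\leq\theta(a,b)$ collapse the right-hand side to $F(a)$, yielding $F(a)\leq F(b)$ and hence equality. For the second equivalence you diverge from the paper: it runs the dual computation directly ($G(b)\leq\theta(b,a)\vartriangleright G(a)=\max(n(\theta(a,b)),G(a))=G(a)$, then sandwiching $G(a)$ between $\max(n(\theta(a,b)),G(b))$ from both sides), whereas you reduce to the first equivalence via (iv) ($\varrho(G)=R(n(G))$) and (D) ($n(G)=\overline{\theta}(n(g))$ is itself an upper approximation), using that an involutive negator is an order anti-automorphism of $[0,1]$. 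That reduction is valid and arguably more economical, though it makes your (v) depend on the unproved (iv), while the paper's dual computation needs only (i) and (iii); your parenthetical alternative is precisely the paper's route. Your closing diagnosis of where symmetry is indispensable is accurate. To repair the proposal, supply proofs of (i)--(iv) --- in particular the two-case transitivity argument in (iii) and the two-case negation identity in (iv).
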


\begin{proof}
(i) By definition $(a,b)\in R(F)$ implies
$F(a)=\theta(a,b)\odot F(b)\leq\theta(a,b)$ and $(a,b)\in\varrho(G)$ yields
$G(a)=\theta(a,b)\vartriangleright G(b)\geq\theta(a,b)\vartriangleright
0=n(\theta(a,b))$.

\smallskip

(ii) If $\theta$ is reflexive, then $\theta(a,a)=1$ implies $F(a)=$
$\theta(a,a)\odot F(a)$ and $G(a)=$ $\theta(a,a)\vartriangleright G(a)$, i.e.
$(a,a)\in R(F)$ and $(a,a)\in\varrho(G)$ hold for all $a\in U$. Thus $R(F)$ and
$\varrho(G)$ are reflexive.

\smallskip
(iii) Let $\theta$ be a $\odot$-quasiorder. Then $R(F)$, $\varrho(G)$
are reflexive, property (ID) holds, and hence $ F(a)=\overline{\theta}(F)(a)$, $G(a)=\underline{\theta
}(G)(a)$ imply $F(a)=\bigvee\{\theta(x,y)\odot F(y)\mid y\in U\}\geq
\theta(a,y)\odot F(y)$ and $G(a)=\bigwedge\{\theta(x,y)\vartriangleright
G(y)\mid y\in U\}\leq\theta(a,y)\vartriangleright G(y)$, $\forall y\in U$.
Take $a,b,c\in U$ with $(a,b),(b,c)\in R(F)$. Then $F(a)=\theta
(a,b)\odot F(b)$ and $F(b)=\theta(b,c)\odot F(c)$ imply $F(a)=(\theta
(a,b)\odot\theta(b,c))\odot F(c)\leq\theta(a,c)\odot F(c)$, because $\theta$
is $\odot$-transitive. Now $F(a)\geq\ \theta(a,c)\odot F(c)$ yields
$F(a)=\theta(a,c)\odot F(c)$, i.e. $(a,c)\in R(F)$. Thus $R(F)$ is also transitive, hence it is a quasiorder.

\noindent Let $(a,b),(b,c)\in\varrho(G)$. Then $G(a)=\theta(a,b)\vartriangleright
G(b)$ and $G(b)=\theta(b,c)\vartriangleright G(c)$ imply $G(a)=\theta
(a,b)\vartriangleright(\theta(b,c)\vartriangleright G(c))$. If
$\vartriangleright$ is an R-implicator, then $\theta(a,b)\vartriangleright
(\theta(b,c)\vartriangleright G(c))=(\theta(a,b)\odot\theta
(b,c))\vartriangleright G(c)$. If $\vartriangleright$ is an S-implicator
$x\vartriangleright y=n(x)\oplus y$, then $\theta(a,b)\vartriangleright
(\theta(b,c)\vartriangleright G(c))=n(\theta(a,b))\oplus(n(\theta(b,c))\oplus
G(c))=(n(\theta(a,b))\oplus n(\theta(b,c))\oplus
G(c)=n(\theta(a,b)\odot\theta(b,c))\oplus G(c)=(\theta(a,b)\odot
\theta(b,c))\vartriangleright G(c)$. Hence in both cases $G(a)=(\theta
(a,b)\odot\theta(b,c))\vartriangleright G(c)$. Because $\theta$ is $\odot
$-transitive (and $\vartriangleright$ is decreasing in the first variable) we
get $G(a)\geq\theta(a,c)\vartriangleright G(c)$. Then $G(a)\leq\theta
(a,c)\vartriangleright G(c)$ yields $G(a)=\theta(a,c)\vartriangleright G(c)$,
i.e. $(a,c)\in\varrho(G)$. Thus $\varrho(G)$ is a $\odot$-quasiorder.

\smallskip
(iv) Observe, that in this case property (D) holds, i.e.,
$n(\overline{\theta}(f))=\underline{\theta}(n(f))$. This yields
$n(F)=\underline{\theta}(n(f))$, and $(a,b)\in R(F)$ means
$F(a)=\theta(a,b)\odot F(b)$. As $n$ is involutive, this is equivalent to
$n(F(a))=n(\theta(a,b)\odot F(b))$. If $\vartriangleright$ is an ITML
implicator, then $n(\theta(a,b)\odot F(b))=(\theta(a,b)\odot
F(b))\vartriangleright0=\theta(a,b)\vartriangleright(F(b)\vartriangleright0)$
$=\theta(a,b)\vartriangleright n(F(b))$. If $\vartriangleright$ is an
S-implicator, then $n(\theta(a,b)\odot F(b))=n(\theta(a,b))\oplus
n(F(b))=\theta(a,b)\vartriangleright n(F(b))$. Hence in both cases
$n(F(a))=n(\theta(a,b)\odot F(b))\Leftrightarrow n(F)(a)=\theta
(a,b)\vartriangleright n(F)(b)$. The right side means $(a,b)\in\varrho(n(F))$.
Thus we get $(a,b)\in R(F)\Leftrightarrow(a,b)\in\varrho(n(F))$, proving
$R(F)=\varrho(n(F))$.

Let $g=n(h)$ for some $h\in\mathcal{F}(U)$. Then $n(G)=n(\underline{\theta
}(g))=\overline{\theta}(n(g))=\overline{\theta}(h)$, and $G=n(\overline
{\theta}(h))$. Hence $(a,b)\in\varrho(G)\Leftrightarrow$ $(a,b)\in\varrho
(n(\overline{\theta}(h)))\Leftrightarrow(a,b)\in R(\overline{\theta
}(h))=R(n(G))$, and this proves $\varrho(G)=R(n(G))$.

\smallskip
(v) In view of (i) $(a,b)\in R(F)$ yields $F(a)\leq\theta(a,b)$ and
$(a,b)\in\varrho(G)$ implies $G(a)\geq n(\theta(a,b))$. We need only to prove the
converse implications. Let $F(a)\leq\theta(a,b)$. Since $\theta$ is also a $\odot
$-quasiorder, in view of (iii) $F(b)\geq\theta(b,a)\odot F(a)=\ $%
min$(\theta(b,a),F(a))=\ $min$(\theta(a,b),F(a))=F(a)$. Hence $F(a)\leq
\ $min$(\theta(a,b),F(b))$. As $F(a)\geq\theta(a,b)\odot F(b)=\ $%
min$(\theta(a,b),F(b))$ also holds, we get $F(a)=\ $min$(\theta(a,b),F(b))$,
i.e. $(a,b)\in R(F)$.

\noindent Now let $G(a)\geq n(\theta(a,b))$. As $\theta(b,a)=\theta(a,b)$, and
by (iii), $G(b)\leq\theta(b,a)\vartriangleright G(a)=\ $%
max$(n(\theta(a,b)),G(a))=G(a)$, we get $G(a)\geq\ $max$(n(\theta(a,b),G(b))$.
Since $G(a)\leq\theta(a,b)\vartriangleright G(b)=\ $max$(n(\theta(a,b)),G(b))$
also holds, we obtain $G(a)=\ $max$(n(\theta(a,b)),G(b))=\theta
(a,b)\vartriangleright G(b)$, i.e. $(a,b)\in\varrho(G)$.
\end{proof}

\begin{corollary}
\label{cor:cor1}
If the conditions in Proposition \ref{prop:prop1}(v) are
satisfied, then $(a,b)\notin R(F)\Leftrightarrow F(a)>\theta(a,b)$ and
$(a,b)\notin\varrho(G)\Leftrightarrow G(a)<n(\theta(a,b))$.
\end{corollary}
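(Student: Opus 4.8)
The plan is to obtain this corollary as an immediate consequence of Proposition \ref{prop:prop1}(v) by negating each of the two biconditionals stated there. Since the hypotheses of the corollary are exactly the conditions assumed in part (v), I may invoke its conclusions directly: under these assumptions one has $(a,b)\in R(F)\Leftrightarrow F(a)\leq\theta(a,b)$ and $(a,b)\in\varrho(G)\Leftrightarrow G(a)\geq n(\theta(a,b))$.

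First I would handle the relation $R(F)$. Taking the contrapositive of the equivalence $(a,b)\in R(F)\Leftrightarrow F(a)\leq\theta(a,b)$ gives $(a,b)\notin R(F)\Leftrightarrow\neg\bigl(F(a)\leq\theta(a,b)\bigr)$. The key point is that $F(a)$ and $\theta(a,b)$ both lie in the totally ordered interval $[0,1]$, so the negation of $F(a)\leq\theta(a,b)$ is precisely $F(a)>\theta(a,b)$. This yields the first claimed equivalence $(a,b)\notin R(F)\Leftrightarrow F(a)>\theta(a,b)$. The argument for $\varrho(G)$ is entirely analogous: negating $(a,b)\in\varrho(G)\Leftrightarrow G(a)\geq n(\theta(a,b))$ and using that the failure of $G(a)\geq n(\theta(a,b))$ in a chain is exactly $G(a)<n(\theta(a,b))$ gives $(a,b)\notin\varrho(G)\Leftrightarrow G(a)<n(\theta(a,b))$.

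I do not anticipate any genuine obstacle here, since the statement is a direct restatement of Proposition \ref{prop:prop1}(v) in negated form; the only substantive ingredient beyond (v) is the totality of the order on $[0,1]$, which guarantees that $\neg(\leq)$ coincides with $>$ and $\neg(\geq)$ with $<$. Accordingly, the proof amounts to citing part (v) and observing this dichotomy, with no further computation required.
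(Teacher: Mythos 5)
Your proposal is correct and matches the paper's intent exactly: the paper states Corollary~\ref{cor:cor1} without proof, treating it as the immediate contrapositive of the two biconditionals in Proposition~\ref{prop:prop1}(v), which is precisely your argument. Your added remark that totality of the order on $[0,1]$ is what turns $\neg(\leq)$ into $>$ and $\neg(\geq)$ into $<$ is the right (and only) observation needed.
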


\begin{proposition}
\label{prop:prop2}
Let $\theta$ be a $\odot$-quasiorder and
$F=\overline{\theta}(f)$, $G=\underline{\theta}(g)$, for some $f,g\in
\mathcal{F}(U)$, and $a,b\in U$. The following hold true:

\noindent(i) If $(a,b)\in R(F)$, then for any $h\in\mathcal{F}(U)$ with $h\leq
F$, $h(b)=F(b)$ implies $\overline{\theta}(h)(a)=F(a)$.

\noindent(ii) If $(a,b)\in\varrho(G)$, then for any $h\in\mathcal{F}(U)$ with
$h\geq G$, $h(b)=G(b)$ implies $\underline{\theta}(h)(a)=G(a)$.
\end{proposition}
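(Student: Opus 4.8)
The plan is to prove each equality by squeezing $\overline{\theta}(h)(a)$ (respectively $\underline{\theta}(h)(a)$) between two bounds that both turn out to equal $F(a)$ (respectively $G(a)$): one bound coming from monotonicity together with the idempotence property (ID), and the other from evaluating the defining join (respectively meet) at the single index $y=b$.

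For part (i), I would first note that $h\le F$ and the order-preservation of $\overline{\theta}$ give $\overline{\theta}(h)\le\overline{\theta}(F)$; since $\theta$ is a $\odot$-quasiorder and $F=\overline{\theta}(f)$, property (ID) yields $\overline{\theta}(F)=F$, whence $\overline{\theta}(h)(a)\le F(a)$. For the reverse inequality I would drop all terms but $y=b$ in the supremum defining $\overline{\theta}(h)(a)$, obtaining $\overline{\theta}(h)(a)\ge\theta(a,b)\odot h(b)$. The hypothesis $h(b)=F(b)$ rewrites the right-hand side as $\theta(a,b)\odot F(b)$, and $(a,b)\in R(F)$ says precisely that $\theta(a,b)\odot F(b)=F(a)$. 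Combining the two inequalities gives $\overline{\theta}(h)(a)=F(a)$.

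Part (ii) is entirely dual, and I would carry it out by the same template with the order relation reversed: from $h\ge G$ and order-preservation, $\underline{\theta}(h)\ge\underline{\theta}(G)=G$ by (ID), so $\underline{\theta}(h)(a)\ge G(a)$; conversely, retaining only $y=b$ in the infimum gives $\underline{\theta}(h)(a)\le\theta(a,b)\vartriangleright h(b)=\theta(a,b)\vartriangleright G(b)=G(a)$, where the last step is the defining equation of $(a,b)\in\varrho(G)$. I would not route (ii) through the duality of Proposition \ref{prop:prop1}(iv), since under the R-implicator hypothesis in (ID) the negator $n$ need not be involutive, so the direct argument is cleaner and more general.

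There is no genuine obstacle here: the two nontrivial ingredients, namely the idempotence of $\overline{\theta}$ and $\underline{\theta}$ on their own images and the single-index estimate for a join or meet, are both already available, the former as (ID) and the latter immediately from the definitions (2) and (2$'$). The only point requiring a little care is applying the monotone bound in the correct direction, that is, an upper bound for $\overline{\theta}$ and a lower bound for $\underline{\theta}$, so that it cooperates with rather than opposes the single-term bound; once the two directions are matched, the desired equalities follow at once.
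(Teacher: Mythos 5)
Your proposal is correct and matches the paper's own proof essentially verbatim: both parts combine the monotonicity of the approximation operators with the idempotence from (ID) for one bound (e.g.\ $\overline{\theta}(h)\leq\overline{\theta}(F)=F$), and the single-index estimate at $y=b$ together with the defining equation of $R(F)$ (resp.\ $\varrho(G)$) for the other. The paper likewise proves (ii) directly rather than via the duality of Proposition~\ref{prop:prop1}(iv), so there is nothing to add.
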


\begin{proof} (i) By definition, we have $F(a)=$ $\theta(a,b)\odot
F(b)$. Hence we get:

\begin{center}
$\overline{\theta}(h)(a)=\bigvee\{\theta(a,y)\odot h(y)\mid y\in U\}\geq\ $

$\theta(a,b)\odot h(b)=\ \theta(a,b)\odot F(b)=F(a)$
\end{center}

\noindent On the other hand, $\overline{\theta}(h)\leq\overline{\theta}(F)=F$
implies $\overline{\theta}(h)(a)\leq F(a)$. Thus we obtain $\overline{\theta
}(h)(a)=F(a)$.

\noindent(ii) Now, analogously we have $G(a)=$ $\theta(a,b)\vartriangleright
G(b)$. Therefore, we get:

\begin{center}
$\underline{\theta}(h)(a)=\bigwedge\{\theta(a,y)\vartriangleright h(y)\mid
y\in U\}\leq\ $

$\theta(a,b)\vartriangleright h(b)=\theta(a,b)\vartriangleright G(b)=G(a)$.
\end{center}

\noindent Now $\underline{\theta}(h)\geq\underline{\theta}(G)=G$
yields $\underline{\theta}(h)(a)\geq G(a)$, whence $\underline{\theta
}(h)(a)=G(a)$. \end{proof}

\section{The equivalences induced by the quasiorders $R(F)$\textbf{ and
}$\varrho(G)$}
\label{sec:eqbyqo}

In this section we assume that conditions in (ID) are
satisfied, i.e. that $\odot$ is a left-continuous t-norm, $\vartriangleright
$ is the R-implicator induced by it and $n(x)=x\vartriangleright0$, or $n$ is an involutive negator, $\oplus$ is the
t-conorm $n$-dual to $\odot$ and $\vartriangleright$ is the S-implicator
defined by them. We also suppose that $(U,\theta)$ is an approximation space with a $\odot$-similarity relation $\theta$ and
$F=\overline{\theta}(f)$, $G=\underline{\theta}(g)$, for some $f,g\in
\mathcal{F}(U)$. Now, by Proposition \ref{prop:prop1}(iii), $R(F)$, $\varrho
(G)\subseteq U\times U$ are (crisp) quasiorders. It is known that
for any quasiorder $q\subseteq U\times U$ the relation $\varepsilon
_{q}:=q\cap q^{-1}$ is an equivalence and $q$ induces a \emph{natural
partial order }$\leq_{q}$ on the factor-set $U/\varepsilon_{q}$ as follows: for
any equivalence classes $A,B\in U/\varepsilon_{q}$ we say that $A\leq_{q}B$, whenever there exists $a\in A$ and $b\in B$ with $(a,b)\in q$. This is equivalent to the fact that $(x,y)\in q$ holds for all $x\in A$ and
$y\in B$.

\noindent Thus we can introduce two equivalence relations $E(F)$
and $\varepsilon (G)$ as follows:%
\[
E(F):=R(F)\cap R(F)^{-1}\text{and }\varepsilon%
(G):=\varrho(G)\cap\varrho(G)^{-1}\text{.}%
\]

\noindent The corresponding (natural) partial orders on the factor-sets
$U/E(F)$ and $U/\varepsilon (G)$ can be defined as follows:

For any $E_{1},E_{2}\in U/E(F)$, we have $E_{1}\leq_{R(F)}%
E_{2}\Leftrightarrow(a_{1},a_{2})\in R(F)$ for some $a_{1}\in E_{1}$ and
$a_{2}\in E_{2}$, and for any $\mathcal{E}_{1},\mathcal{E}_{2}\in
U/\varepsilon (G)$ we have $\mathcal{E}_{1}\leq_{\varrho(G)}%
\mathcal{E}_{2}\Leftrightarrow(b_{1},b_{2})\in\varrho(G)$ for some $b_{1}%
\in\mathcal{E}_{1}$ and $b_{2}\in\mathcal{E}_{2}$.

\noindent $E$ is called a \emph{maximal }$E(F)$\emph{\ class}, if it is a
maximal element of the poset $(U/E(F),\leq_{R(F)})$ and $\mathcal{E}$ is a \emph{maximal }$\varepsilon (G)$\emph{\ class} if it is 
maximal in $(U/\varepsilon (G),\leq_{\varrho(G)})$. The
$E(F)$ and $\varepsilon (G)$ class of an $a\in U$ is
denoted by $[a]_{E(F)}$ and $[a]_{\varepsilon (G)} $, respectively.
In this section we prove several properties of these classes used to characterize
pairs of fuzzy sets which together form fuzzy rough sets. 
\medskip

\begin{lemma}
\label{lem:assertions}
The following assertions hold true:

\noindent(i) If $E\subseteq U$ is an $E(F)$ class, then
$F(a)=F(b)\leq\theta(a,b)$, for all $a,b\in E$;

\noindent(ii) If $\mathcal{E}\subseteq U$ is an $\varepsilon (G)$
class, then $G(a)=G(b)\geq n(\theta(a,b))$, for all $a,b\in\mathcal{E}$;

\noindent(iii) If $E\subseteq U$ is a maximal $E(F)$ class, then
$\theta(a,z)\odot F(z)<F(a)=F(b)\leq\theta(a,b)$ and $\theta(a,z)<\theta
(a,b)$, for all $a,b\in E$ and $z\notin E$;

\noindent(iv) If $\mathcal{E}\subseteq U$ is a maximal $\varepsilon (G)$ class, then $n(\theta(a,b))\leq G(a)=G(b)<\theta(a,z)\vartriangleright
G(z)$ and $\theta(a,z)<\theta(a,b)$, for all $a,b\in\mathcal{E}$ and
$z\notin\mathcal{E}$;

\noindent(v) Assume that $n(x)=x\vartriangleright0$ is involutive. Then the
$E(F)$ classes and the $\varepsilon (n(F))$ classes are the
same, and $E\subseteq U$ is a maximal $E(F)$ class if and only if it
is also a maximal $\varepsilon (n(F))$ class.
\end{lemma}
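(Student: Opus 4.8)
The plan is to treat the five parts as two order-dual pairs, (i)/(ii) and (iii)/(iv), together with the short corollary (v). Throughout I would rely only on the facts already recorded in Proposition~\ref{prop:prop1}: part~(i) supplies $F(a)\le\theta(a,b)$ whenever $(a,b)\in R(F)$ and $G(a)\ge n(\theta(a,b))$ whenever $(a,b)\in\varrho(G)$, while part~(iii) (available since $\theta$ is in particular a $\odot$-quasiorder) supplies the one-sided estimates $F(a)\ge\theta(a,y)\odot F(y)$ and $G(a)\le\theta(a,y)\vartriangleright G(y)$ for every $y\in U$. I would also use that both admissible implicators are border implicators, so that $1\vartriangleright x=x$, and that $\odot$ is monotone with $x\odot y\le y$.

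For (i), if $a,b$ lie in the same $E(F)$ class then $(a,b),(b,a)\in R(F)$, so $F(a)=\theta(a,b)\odot F(b)\le F(b)$ and symmetrically $F(b)\le F(a)$, forcing $F(a)=F(b)$; the bound $F(a)\le\theta(a,b)$ is then immediate from Proposition~\ref{prop:prop1}(i). Part~(ii) is the exact order dual: from $(a,b),(b,a)\in\varrho(G)$ and the fact that $\vartriangleright$ is decreasing in its first argument with $1\vartriangleright x=x$, I get $G(a)=\theta(a,b)\vartriangleright G(b)\ge 1\vartriangleright G(b)=G(b)$ and symmetrically, hence $G(a)=G(b)$, while $G(a)\ge n(\theta(a,b))$ again comes from Proposition~\ref{prop:prop1}(i).

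For (iii) I would fix a maximal class $E$, points $a,b\in E$ and $z\notin E$. The inequality $\theta(a,z)\odot F(z)<F(a)$ I prove by contradiction: equality would read $F(a)=\theta(a,z)\odot F(z)$, i.e. $(a,z)\in R(F)$, whence $[a]_{E(F)}\le_{R(F)}[z]_{E(F)}$; maximality of $E$ then forces $[z]_{E(F)}=E$, contradicting $z\notin E$. The chain $F(a)=F(b)\le\theta(a,b)$ is just (i). The genuinely delicate step, and the one I expect to be the main obstacle, is $\theta(a,z)<\theta(a,b)$, since here one must pass from the crisp relation back to the fuzzy data. The clean route is to note that $(a,b)\in R(F)$ with $F(a)=F(b)$ gives $F(a)=\theta(a,b)\odot F(a)$, so $\theta(a,b)$ acts as a left identity on $F(a)$. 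If one assumes $\theta(a,z)\ge\theta(a,b)$, then monotonicity of $\odot$ yields $\theta(a,z)\odot F(a)\ge\theta(a,b)\odot F(a)=F(a)$, hence $\theta(a,z)\odot F(a)=F(a)$; combining this with symmetry of $\theta$ and Proposition~\ref{prop:prop1}(iii) gives $F(z)\ge\theta(z,a)\odot F(a)=\theta(a,z)\odot F(a)=F(a)$, and therefore $\theta(a,z)\odot F(z)\ge\theta(a,z)\odot F(a)=F(a)$, contradicting the strict inequality already established. Part~(iv) is obtained by dualising every step, replacing $\odot$ by $\vartriangleright$, lower estimates by upper ones, and $F$ by $G$: the analogue of the left-identity trick is $G(a)=\theta(a,b)\vartriangleright G(a)$, and $\theta(a,z)\ge\theta(a,b)$ forces $G(z)\le\theta(z,a)\vartriangleright G(a)=\theta(a,z)\vartriangleright G(a)\le G(a)$, whence $\theta(a,z)\vartriangleright G(z)\le\theta(a,z)\vartriangleright G(a)\le G(a)$, which contradicts $G(a)<\theta(a,z)\vartriangleright G(z)$.

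Finally, (v) is essentially a repackaging of Proposition~\ref{prop:prop1}(iv). When $n(x)=x\vartriangleright0$ is involutive, property~(D) gives $n(F)=\underline{\theta}(n(f))$, so $\varrho(n(F))$ is well defined, and Proposition~\ref{prop:prop1}(iv) yields $R(F)=\varrho(n(F))$. Intersecting with inverses gives $E(F)=R(F)\cap R(F)^{-1}=\varrho(n(F))\cap\varrho(n(F))^{-1}=\varepsilon(n(F))$, so the two families of classes coincide; and since the two quasiorders are literally equal, they induce one and the same natural order on the common factor set, so an $E(F)$ class is maximal for $\le_{R(F)}$ exactly when it is maximal for $\le_{\varrho(n(F))}$. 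The only real work lies in the second inequalities of (iii) and (iv); everything else is bookkeeping with the induced quasiorders and their natural orders.
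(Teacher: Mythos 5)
Your proposal is correct and follows essentially the same route as the paper's proof: (i)/(ii) directly from the definitions with the border-implicator property, (iii)/(iv) by combining the strict inequalities forced by maximality with a contradiction argument using symmetry of $\theta$, monotonicity, and Proposition~\ref{prop:prop1}(iii), and (v) by invoking $R(F)=\varrho(n(F))$ from Proposition~\ref{prop:prop1}(iv) under property~(D). Your variant of the key step $\theta(a,z)<\theta(a,b)$ (deriving $F(a)=\theta(a,b)\odot F(a)$ and contradicting the strict bound at $z$ directly, rather than at an auxiliary point $c$) is only a cosmetic reorganization of the paper's chain of inequalities.
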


\begin{proof} (i) If $E\subseteq U$ is an $E(F)$ class,
then $(a,b),(b,a)\in R(F)$ holds for any $a,b\in E$. Therefore, $F(a)=$
$\theta(a,b)\odot F(b)\leq\theta(a,b)$, $F(b)$ and $F(b)=\theta(b,a)\odot
F(a)\leq F(a)$. Thus we obtain $F(b)=F(a)\leq\theta(a,b)$.

(ii) If $\mathcal{E}\subseteq U$ is an $\varepsilon (G)$
class, then $(a,b),(b,a)\in\varrho(G)$ imply $G(a)=\theta(a,b)\vartriangleright
G(b)\geq1\vartriangleright G(b)=G(b)$ and $G(b)\geq\theta
(b,a)\vartriangleright G(a)\geq1\vartriangleright G(a)=G(a)$, for any
$a,b\in\mathcal{E}$. Hence $G(a)=G(b)$. By Proposition \ref{prop:prop1}(i) we obtain
$G(a)=G(b)\geq n(\theta(a,b))$, for all $a,b\in\mathcal{E}$.

(iii) Let $E$ be a maximal $E(F)$ class. Then $F(a)=\theta
(a,b)\odot F(b)$, $F(b)=\theta(a,b)\odot F(a)$, and in view of (i),
$F(a)=F(b)\leq\theta(a,b)$. We also have $F(a)\geq\ \theta(a,z)\odot F(z)$, according to Proposition \ref{prop:prop1}(iii). As $E\nleq\lbrack z]_{E(F)}$
implies $(a,z)\notin R(F)$ for each $z\notin E$, we obtain $F(a)>\theta
(a,z)\odot F(z)$, for all $z\notin E$. Now, suppose that $\theta
(a,c)\geq\theta(a,b)$, for some $c\notin E$. Then $F(c)\geq\ \theta(c,a)\odot
F(a)=\theta(a,c)\odot F(b)\geq\theta(a,b)\odot F(b)=F(a)$. This further yields
$F(a)>\theta(a,c)\odot F(c)\geq\theta(a,b)\odot F(a)=F(b)$, a contradiction.
Thus $\theta(a,z)<\theta(a,b)$, for each $z\notin E$.

(iv) If $\mathcal{E}\subseteq U$ is a maximal $\varepsilon
(G)$ class, then $G(a)=\theta(a,b)\vartriangleright G(b)$, $G(b)=\theta
(a,b)\vartriangleright G(a)$, and in view of (i), $n(\theta(a,b))\leq
G(a)=G(b)$, for all $a,b\in\mathcal{E}$ and $\mathcal{E}\nleq\lbrack
z]_{\varepsilon (G)}$, for any $z\notin\mathcal{E}$. Now, by Proposition \ref{prop:prop1}(iii), $G(a)\leq\theta
(a,z)\vartriangleright G(z)$, hence $(a,z)\notin\varrho(G)$ yields $G(a)<\theta
(a,z)\vartriangleright G(z)$. By way of contradiction, assume
$\theta(a,c)\geq\theta(a,b)$, for some $c\notin E$. Then $G(c)\leq
\theta(a,c)\vartriangleright G(a)\leq\theta(a,b)\vartriangleright G(b)=G(a)$.
This further yields $G(a)<\theta(a,c)\vartriangleright G(c)\leq\theta
(a,b)\vartriangleright G(a)=G(b)$, a contradiction again.

(v) If $n(x)=x\vartriangleright0$ is involutive, then property (D)
means that $n(F)=n(\overline{\theta}(f))=\underline{\theta}(n(f))$. Hence, relation $\varrho(n(F))=R(F)$ is well defined, and $E(F)=R(F)\cap
R(F)^{-1}=\varrho(n(F))\cap\varrho(n(F))^{-1}=\varepsilon (n(F))$. Thus
the equivalence classes of $E(F)$ and $\varepsilon (n(F))$
coincide. $R(F)=\varrho(n(F))$ also yields $\leq_{R(F)}=\leq_{\varrho(n(F))}$, i.e.
the posets $(U/E(F),\leq_{R(F)})$ and $(U/\varepsilon
(n(F)),\leq_{\varrho(n(F))})$ are the same. Therefore, the maximal $E%
(F)$ and $\varepsilon (n(F))$ classes coincide.
\end{proof}

\begin{corollary}
\label{cor:assertions}
Let $E$ be an $E(F)$\ class and
$\mathcal{E}$ be an $\varepsilon (G)$ class such that $E\cap
\mathcal{E}\neq\emptyset$. Then the following assertions hold:

\noindent(i) If $E\subseteq U$ is a maximal $E(F)$ class and
$\mathcal{E}\subseteq U$ is a maximal $\varepsilon (G)$ class, then
$E\subseteq\mathcal{E}$ or $\mathcal{E}\subseteq E$ holds.

\noindent(ii) If $\theta$ is a similarity relation, then $(x,y)\in R(F)$ or
$(y,x)\in\varrho(G)$ holds for all $x\in E$ and $y\in\mathcal{E}$.
\end{corollary}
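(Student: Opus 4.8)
The plan is to prove both parts by contradiction, exploiting a common element $w\in E\cap\mathcal{E}$ together with the sharp inequalities recorded in Lemma \ref{lem:assertions}. Part (i) will use the two maximality estimates of the form $\theta(a,z)<\theta(a,b)$, while part (ii) will route an estimate through $w$ using the min-transitivity of the similarity relation.

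For part (i), I would fix $w\in E\cap\mathcal{E}$ and suppose, towards a contradiction, that neither inclusion holds; then there exist $a\in E\setminus\mathcal{E}$ and $b\in\mathcal{E}\setminus E$. Since $w,a\in E$ and $b\notin E$, the strict inequality in Lemma \ref{lem:assertions}(iii), applied with the two in-class elements $w,a$ and the outside element $b$, gives $\theta(w,b)<\theta(w,a)$. Symmetrically, since $w,b\in\mathcal{E}$ and $a\notin\mathcal{E}$, Lemma \ref{lem:assertions}(iv), applied with the in-class elements $w,b$ and the outside element $a$, gives $\theta(w,a)<\theta(w,b)$. These two inequalities are incompatible, so one of $E\subseteq\mathcal{E}$ or $\mathcal{E}\subseteq E$ must hold. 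The only point requiring care is matching the quantified variables of Lemma \ref{lem:assertions}(iii)--(iv) to the correct elements; once that is done, the contradiction is immediate.

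For part (ii), I would first invoke Proposition \ref{prop:prop1}(v) to translate the two target memberships into numerical conditions: $(x,y)\in R(F)\Leftrightarrow F(x)\leq\theta(x,y)$, and, using symmetry of $\theta$, $(y,x)\in\varrho(G)\Leftrightarrow G(y)\geq n(\theta(x,y))$. Choosing $w\in E\cap\mathcal{E}$, Lemma \ref{lem:assertions}(i) yields $F(x)=F(w)\leq\theta(x,w)$, and Lemma \ref{lem:assertions}(ii) yields $G(y)=G(w)\geq n(\theta(w,y))$, which rewrites via the involutivity of $n$ as $\theta(w,y)\geq n(G(y))$. The min-transitivity of the similarity relation $\theta$ then gives $\theta(x,y)\geq\min(\theta(x,w),\theta(w,y))\geq\min(F(x),n(G(y)))$.

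Finally I would argue by contradiction: if both memberships failed, then $F(x)>\theta(x,y)$ and $G(y)<n(\theta(x,y))$, the latter giving $n(G(y))>\theta(x,y)$ by involutivity; hence $\min(F(x),n(G(y)))>\theta(x,y)$, contradicting the displayed inequality. The main obstacle is conceptual rather than computational: one must recognize that the similarity hypothesis is precisely what permits routing the estimate through the common point $w$ via min-transitivity, and that Proposition \ref{prop:prop1}(v) is available exactly in this ($\min$, $\max$-based S-implicator, involutive $n$) setting. The remaining steps are routine order arithmetic in $[0,1]$.
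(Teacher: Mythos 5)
Part (i) of your proposal is exactly the paper's argument: the paper also fixes $a\in E\cap\mathcal{E}$, picks $b\in E\setminus\mathcal{E}$ and $c\in\mathcal{E}\setminus E$, and derives the incompatible strict inequalities $\theta(a,c)<\theta(a,b)$ (from Lemma \ref{lem:assertions}(iii)) and $\theta(a,b)<\theta(a,c)$ (from Lemma \ref{lem:assertions}(iv)). Up to renaming, there is nothing to compare.

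Part (ii) is where you genuinely diverge, and the divergence cuts both ways. The paper proves (ii) by contradiction from the same strict maximality inequalities: assuming $(x,y)\notin R(F)$ and $(y,x)\notin\varrho(G)$, it gets $\theta(x,y)<\theta(x,a)$ and $\theta(y,x)<\theta(y,a)$ via Lemma \ref{lem:assertions}(iii)--(iv) and contradicts min-transitivity through the common element $a$. Note that those inequalities require $E$ and $\mathcal{E}$ to be \emph{maximal} classes --- a hypothesis the statement of (ii) does not literally contain --- whereas your route, through Lemma \ref{lem:assertions}(i)--(ii), Corollary \ref{cor:cor1} (equivalently Proposition \ref{prop:prop1}(v)) and min-transitivity, works for arbitrary classes; your appeals to involutivity are sound, since an involutive negator is a strictly decreasing bijection. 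That is a real gain: the paper later applies (ii) in the proof of Corollary \ref{cor:outer}(i) to a class $E$ that need not be maximal, and it is your version of the argument that actually licenses that application. The price is operator generality. Corollary \ref{cor:assertions} sits in Section \ref{sec:eqbyqo}, whose standing hypotheses are (ID): $\odot$ may be any left-continuous t-norm with its R-implicator, or any S-implicator from an $n$-dual pair; the extra clause in (ii), ``$\theta$ is a similarity relation,'' constrains only $\theta$ (min-transitivity), not $\odot$ and $\vartriangleright$. Proposition \ref{prop:prop1}(v) --- specifically its converse direction $F(x)\leq\theta(x,y)\Rightarrow(x,y)\in R(F)$, which your translation step needs --- is available only for $\odot=\min$ and $x\vartriangleright y=\max(n(x),y)$: for, say, the \L ukasiewicz t-norm with its R-implicator, $F(x)\leq\theta(x,y)$ is far weaker than $F(x)=\theta(x,y)\odot F(y)$, and your proof breaks at its first step. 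So under the section's stated (ID) assumptions your part (ii) has a scope gap (you tacitly strengthened the hypotheses to condition (C)); under condition (C) of Section \ref{sec:propofeqc}, where the corollary is in fact used, your proof is correct and strictly stronger than the paper's, since it dispenses with maximality.
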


\begin{proof} Let $a\in E\cap\mathcal{E}$. (i) Assume that neither
$E\subseteq\mathcal{E}$ nor $\mathcal{E}\subseteq E$ hold. Then there exist
elements $b\in E\setminus\mathcal{E}$, $c\in\mathcal{E}\setminus E$. As
$a,b\in E$ but $c\notin E$, in view of Lemma \ref{lem:assertions}(iii) we have $\theta
(a,c)<\theta(a,b)$. Similarly, $a,c\in\mathcal{E}$ and $b\notin\mathcal{E}$
imply $\theta(a,b)<\theta(a,c)$, a contradiction to the previous result.

(ii) If $\mathcal{E}\subseteq E$ or $E\subseteq\mathcal{E}$ then (ii) is
clearly satisfied. Hence we may assume $\mathcal{E}\setminus E\neq\emptyset$
and $E\setminus\mathcal{E}\neq\emptyset$. Suppose that there exist
$x\in E$ and $y\in\mathcal{E}$ with $(x,y)\notin R(F)$. We claim that
$(y,x)\in\varrho(G)$. Assume by contradiction $(y,x)\notin\varrho(G)$. Since $x,a\in
E$, $y\notin E$ and $y,a\in\mathcal{E}$, $x\notin\mathcal{E}$, in view of
Lemma \ref{lem:assertions}(iii) and (iv) we get $\theta(x,y)<\theta(x,a)$ and $\theta
(x,y)=\theta(y,x)<\theta(y,a)=\theta(a,y)$. Thus we obtain $\theta
(x,y)<\ $min$(\theta(x,a),\theta(a,y))\leq\theta(x,y)$, a contradiction. This
proves $(y,x)\in\varrho(G)$. \end{proof}

\begin{proposition}
\label{prop:twodiffeqclass}
(i) If $E_{1},E_{2}$ are different
$E(F)$ classes with $E_{1}\leq_{R(F)}E_{2}$, then for any $a_{1}\in
E_{1}$ and $a_{2}\in E_{2}$ we have $F(a_{1})<F(a_{2})$.

\noindent(ii) If $\mathcal{E}_{1},\mathcal{E}_{2}$ are different
$\varepsilon (G)$ classes with $\mathcal{E}_{1}\leq_{\varrho
(G)}\mathcal{E}_{2}$ then for any $b_{1}\in\mathcal{E}_{1}$ and $b_{2}%
\in\mathcal{E}_{2}$ we have $G(b_{1})>G(b_{2})$.
\end{proposition}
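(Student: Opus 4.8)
The plan is to prove (i) directly and then obtain (ii) by a completely parallel, order-dual argument, since $R(F)$ and $\varrho(G)$ play symmetric roles with $\odot$ replaced by $\vartriangleright$ and suprema by infima. For (i), first recall that $E_{1}\leq_{R(F)}E_{2}$ is equivalent to $(a_{1},a_{2})\in R(F)$ for \emph{all} $a_{1}\in E_{1}$, $a_{2}\in E_{2}$, as noted in Section~\ref{sec:eqbyqo}. So fix such $a_{1},a_{2}$. By the definition of $R(F)$ we have $F(a_{1})=\theta(a_{1},a_{2})\odot F(a_{2})$, and since $\theta(a_{1},a_{2})\leq 1$ and $\odot$ is a monotone t-norm with $1\odot x=x$, this gives at once the weak inequality $F(a_{1})\leq F(a_{2})$.

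The heart of the proof is upgrading this to a strict inequality, and this is where the symmetry of $\theta$ (the fact that $\theta$ is a \emph{similarity} relation, not merely a $\odot$-quasiorder) is essential. I would argue by contradiction: suppose $F(a_{1})=F(a_{2})$. Using $\theta(a_{2},a_{1})=\theta(a_{1},a_{2})$ together with $F(a_{1})=F(a_{2})$ and the relation $(a_{1},a_{2})\in R(F)$, one computes $\theta(a_{2},a_{1})\odot F(a_{1})=\theta(a_{1},a_{2})\odot F(a_{2})=F(a_{1})=F(a_{2})$, which is exactly the statement $(a_{2},a_{1})\in R(F)$. Hence $E_{2}\leq_{R(F)}E_{1}$. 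Since $\leq_{R(F)}$ is the natural partial order on $U/E(F)$ and is therefore antisymmetric, $E_{1}\leq_{R(F)}E_{2}$ together with $E_{2}\leq_{R(F)}E_{1}$ forces $E_{1}=E_{2}$, contradicting that the classes are different. Thus $F(a_{1})<F(a_{2})$.

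For (ii) I would run the mirror-image argument. From $\mathcal{E}_{1}\leq_{\varrho(G)}\mathcal{E}_{2}$ we get $(b_{1},b_{2})\in\varrho(G)$, i.e. $G(b_{1})=\theta(b_{1},b_{2})\vartriangleright G(b_{2})$; since $\vartriangleright$ is a border implicator ($1\vartriangleright x=x$) that is decreasing in its first argument and $\theta(b_{1},b_{2})\leq 1$, this yields $G(b_{1})\geq 1\vartriangleright G(b_{2})=G(b_{2})$. Assuming $G(b_{1})=G(b_{2})$, the symmetry $\theta(b_{2},b_{1})=\theta(b_{1},b_{2})$ gives $\theta(b_{2},b_{1})\vartriangleright G(b_{1})=\theta(b_{1},b_{2})\vartriangleright G(b_{2})=G(b_{1})=G(b_{2})$, so $(b_{2},b_{1})\in\varrho(G)$ and $\mathcal{E}_{2}\leq_{\varrho(G)}\mathcal{E}_{1}$; antisymmetry again contradicts $\mathcal{E}_{1}\neq\mathcal{E}_{2}$. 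The only real obstacle is the strict-inequality step, and the key observation making it work is that symmetry of $\theta$ lets one reverse the defining relation of $R(F)$ (resp. $\varrho(G)$) once the $F$- (resp. $G$-) values coincide; no properties of $\odot$ or $\vartriangleright$ beyond monotonicity, the unit law $1\odot x=x$, and the border condition $1\vartriangleright x=x$ are needed.
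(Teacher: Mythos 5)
Your proof is correct and follows essentially the same route as the paper's: establish the weak inequality from the defining identity of $R(F)$ (resp.\ $\varrho(G)$), then assume equality of values, use the symmetry of $\theta$ to reverse the relation, and invoke antisymmetry of the natural partial order on the factor-set to force $E_{1}=E_{2}$ (resp.\ $\mathcal{E}_{1}=\mathcal{E}_{2}$), a contradiction. Your explicit remarks on which properties are used (symmetry of $\theta$, $1\odot x=x$, the border condition $1\vartriangleright x=x$, monotonicity) only make transparent what the paper's proof uses implicitly.
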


\begin{proof} (i) Assume $E_{1}\leq_{R(F)}E_{2}$. Then for any
$a_{1}\in E_{1}$ and $a_{2}\in E_{2}$ we have $(a_{1},a_{2})\in R(F)$, i.e.
$F(a_{1})=\theta(a_{1},a_{2})\odot F(a_{2})\leq F(a_{2})$. Observe that
$F(a_{2})\neq F(a_{1})$. Indeed, $F(a_{2})=F(a_{1})$ would imply
$F(a_{2})=\theta(a_{1},a_{2})\odot F(a_{1})=\theta(a_{2},a_{1})\odot F(a_{1}%
)$, i.e. $(a_{2},a_{1})\in R(F)$, which means $E_{2}\leq_{R(F)}E_{1}$. As
$\leq_{R(F)}$ is a partial order, this would yield $E_{1}=E_{2}$, a
contradiction. Thus we deduce $F(a_{1})<F(a_{2})$.

\noindent(ii) Let $\mathcal{E}_{1}\leq_{\varrho(G)}\mathcal{E}_{2}$. Then for
any $b_{1}\in\mathcal{E}_{1}$, $b_{2}\in\mathcal{E}_{2}$ we have
$(b_{1},b_{2})\in\varrho(G)$, which gives $G(b_{1})=\theta(b_{1},b_{2}%
)\vartriangleright G(b_{2})\geq G(b_{2}).$ We claim
$G(b_{1})>G(b_{2})$. Indeed, $G(b_{2})=G(b_{1})$ would imply $G(b_{2}%
)=\theta(b_{1},b_{2})\vartriangleright G(b_{1})=\theta(b_{2},b_{1}%
)\vartriangleright G(b_{1})$, i.e. $(b_{2},b_{1})\in\varrho(G)$, which would yield $\mathcal{E}_{1}=\mathcal{E}_{2}$, a contradiction. \end{proof}

Clearly, if each chain in the posets $(U/E(F),\leq_{R(F)})$ and
$(U/\varepsilon (G),\leq_{\varrho(G)})$ is finite, then any element of
them is less than or equal to a maximal element in the corresponding poset. By
using this observation we deduce

\begin{corollary}
\label{cor:maxeqclexists}
Assume\textbf{ }that the relation $\theta$
and the fuzzy sets $f,g\in\mathcal{F}(U)$ have a finite range, and let
$F=\overline{\theta}(f)$, $G=\underline{\theta}(g)$. Then for any
$E(F)$ class $E$, there exists a maximal $E(F)$ class
$E_{M}$ such that $E\leq_{R(F)}E_{M}$, and for any $\varepsilon (G)$
class $\mathcal{E}$, there is a maximal $\varepsilon (G)$ class
$\mathcal{E}_{M}$ with $\mathcal{E}\leq_{\varrho(G)}\mathcal{E}_{M}$.
\end{corollary}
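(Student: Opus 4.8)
The plan is to reduce the statement to the observation quoted immediately before the corollary: in a poset in which every chain is finite, any element lies below some maximal element. Hence it suffices to show that, under the finite-range hypotheses, all chains in $(U/E(F),\leq_{R(F)})$ and in $(U/\varepsilon(G),\leq_{\varrho(G)})$ are finite.

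First I would invoke Lemma \ref{lem:frange}: since $\theta$ and $f$ have a finite range, so does $F=\overline{\theta}(f)$, and likewise $G=\underline{\theta}(g)$ has a finite range. Write $m$ for the number of distinct values of $F$. Now take any chain $E_1\leq_{R(F)}E_2\leq_{R(F)}\cdots\leq_{R(F)}E_k$ of pairwise distinct $E(F)$ classes and choose a representative $a_i\in E_i$. By Proposition \ref{prop:twodiffeqclass}(i), each step $E_i\leq_{R(F)}E_{i+1}$ with $E_i\neq E_{i+1}$ forces $F(a_i)<F(a_{i+1})$, so $F(a_1)<F(a_2)<\cdots<F(a_k)$ are $k$ distinct values of $F$; hence $k\leq m$. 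Therefore every chain in $(U/E(F),\leq_{R(F)})$ has at most $m$ elements and is finite. The quoted observation then yields, for each $E(F)$ class $E$, a maximal class $E_M$ with $E\leq_{R(F)}E_M$.

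The argument for $\varrho(G)$ is entirely symmetric: using the finite range of $G$ together with the strict monotonicity $G(b_1)>G(b_2)$ supplied by Proposition \ref{prop:twodiffeqclass}(ii), every chain in $(U/\varepsilon(G),\leq_{\varrho(G)})$ is bounded in length by the number of distinct values of $G$, hence is finite, and each $\varepsilon(G)$ class $\mathcal{E}$ lies below a maximal class $\mathcal{E}_M$ with $\mathcal{E}\leq_{\varrho(G)}\mathcal{E}_M$.

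I do not expect a genuine obstacle here once the two earlier results are available; the only point deserving a moment's care is that distinct classes occurring in a chain must receive \emph{distinct} $F$-values (respectively $G$-values). This is exactly the content of Proposition \ref{prop:twodiffeqclass}, and it is precisely what converts the finiteness of the range (Lemma \ref{lem:frange}) into a finite bound on chain length.
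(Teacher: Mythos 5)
Your proposal is correct and follows essentially the same route as the paper's own proof: both deduce from Lemma \ref{lem:frange} that $F$ and $G$ have finite ranges, use Proposition \ref{prop:twodiffeqclass} to show distinct classes in a chain carry strictly increasing $F$-values (resp.\ strictly decreasing $G$-values), conclude that every chain in the factor posets is finite, and then invoke the observation that in such posets every element lies below a maximal one. Your explicit bound $k\leq m$ on chain length is just a slightly more quantitative phrasing of the paper's order-isomorphism argument.
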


\begin{proof} If the above conditions hold, then the fuzzy sets $F$
and $G$ also have a finite range. Now let $\{E_{i}\mid i\in I\}$ be an arbitrary
(nonempty) chain of $E(F)$ classes. In view of Proposition \ref{prop:twodiffeqclass}, for
any $a_{i}\in E_{i}$, $i\in I$, the values $\{F(a_{i})\mid i\in I\}$ also form
a chain, and for $E_{i}\leq_{R(F)}E_{j}$, $E_{i}\neq E_{j}$ we have
$F(a_{i})<F(a_{j})$, and vice versa. This means that the chains $\{E_{i}\mid i\in I\}$ and
$\{F(a_{i})\mid i\in I\}$ are order-isomorphic. Since $F$ has a finite range,
the chain $\{F(a_{i})\mid i\in I\}$ has a finite length. Hence the chain
$\{E_{i}\mid i\in I\}$ is also finite. As every chain in the poset
$(U/E(F),\leq_{R(F)})$ is finite, any element $E$ of it is less than or
equal to a maximal element $E_{M}$ of it, i.e. $E\leq_{R(F)}E_{M}$. 
The second statement is proved analogously. \end{proof}

\noindent The importance of maximal classes in this case is shown by the following:

\begin{proposition}
\label{prop:hufuhvgv}
Suppose that $\theta$ and the fuzzy sets
$f,g\in\mathcal{F}(U)$ have a finite range, and let $F=\overline{\theta}(f)$,
$G=\underline{\theta}(g)$. Then the following assertions hold:

\noindent(i) If $h\leq F$ for some $h\in\mathcal{F}(U)$ and for any maximal
$E(F)$ class $E_{M}$ there exists an element $u\in E_{M}$ with
$h(u)=F(u)$, then $\overline{\theta}(h)=F$.

\noindent(ii) If $h\geq G$ for some $h\in\mathcal{F}(U)$ and for any maximal
$\varepsilon (G)$ class $\mathcal{E}_{M}$ there exists an element
$v\in\mathcal{E}_{M}$ with $h(v)=G(v)$, then $\underline{\theta}(h)=G$.
\end{proposition}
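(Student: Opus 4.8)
The plan is to prove (i); part (ii) then follows by the exact dual argument, replacing $\overline{\theta}$, $R(F)$, $\leq_{R(F)}$ and Proposition~\ref{prop:prop2}(i) by $\underline{\theta}$, $\varrho(G)$, $\leq_{\varrho(G)}$ and Proposition~\ref{prop:prop2}(ii), and reversing the inequalities. For (i), I would first dispose of one inequality for free: since $h\leq F$ and $\overline{\theta}$ is order-preserving, while $F=\overline{\theta}(F)$ by (ID), we obtain $\overline{\theta}(h)\leq\overline{\theta}(F)=F$. Hence it remains only to show $\overline{\theta}(h)(a)\geq F(a)$, i.e.\ $\overline{\theta}(h)(a)=F(a)$, for every $a\in U$.

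The key step is to reduce the pointwise equality at an arbitrary $a$ to a point of agreement between $h$ and $F$ lying in a \emph{maximal} class above $a$. Fix $a\in U$ and set $E:=[a]_{E(F)}$. Because $\theta$ and $f$ have finite range, so does $F$, and Corollary~\ref{cor:maxeqclexists} supplies a maximal $E(F)$ class $E_{M}$ with $E\leq_{R(F)}E_{M}$. By the standing hypothesis of (i) there is some $u\in E_{M}$ with $h(u)=F(u)$. Now I invoke the fact recorded in Section~\ref{sec:eqbyqo} that $E\leq_{R(F)}E_{M}$ is equivalent to $(x,y)\in R(F)$ for \emph{all} $x\in E$ and $y\in E_{M}$; in particular $(a,u)\in R(F)$. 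Since $\theta$ is a $\odot$-similarity relation (hence a $\odot$-quasiorder), $h\leq F$, and $h(u)=F(u)$, Proposition~\ref{prop:prop2}(i) applies with $b:=u$ and yields $\overline{\theta}(h)(a)=F(a)$. As $a\in U$ was arbitrary, this gives $\overline{\theta}(h)\geq F$, and together with the first paragraph, $\overline{\theta}(h)=F$.

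I do not expect a genuine obstacle here, since essentially all the work is carried by the auxiliary results. The two points that require care are the use of the finite-range assumption to guarantee, via Corollary~\ref{cor:maxeqclexists}, a maximal class $E_{M}$ above $[a]_{E(F)}$ (without finiteness one could not climb to a maximal element), and the careful passage from the order relation $E\leq_{R(F)}E_{M}$ between classes to the concrete membership $(a,u)\in R(F)$ for the chosen representatives $a$ and $u$. Once these are in place, verifying that the hypotheses of Proposition~\ref{prop:prop2}(i) are met is immediate, as $h\leq F$ and $h(u)=F(u)$ are exactly what has been arranged.
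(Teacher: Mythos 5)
Your proposal is correct and follows essentially the same route as the paper's own proof: fix $a\in U$, use the finite ranges via Corollary~\ref{cor:maxeqclexists} to find a maximal class $E_{M}$ above $[a]_{E(F)}$, pick $u\in E_{M}$ with $h(u)=F(u)$, and apply Proposition~\ref{prop:prop2}(i) to get $\overline{\theta}(h)(a)=F(a)$, with (ii) handled dually. Your preliminary inequality $\overline{\theta}(h)\leq\overline{\theta}(F)=F$ is harmless but redundant, since Proposition~\ref{prop:prop2}(i) already delivers the pointwise equality outright.
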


\begin{proof} (i) Let $x\in U$ be arbitrary. As $\theta$ and $f$
have finite ranges, in view of Corollary \ref{cor:maxeqclexists}, there exists a maximal
$E(F)$ class $E_{M}$ such that $[x]_{E(F)}\leq_{R(F)}E_{M}$. Then
$(x,y)\in R(F)$ for all $y\in E_{M}$. By assumption, there exists an element
$u\in E_{M}$ with $h(u)=F(u)$. Since $h\leq F$ and $(x,u)\in R(F)$, in view of
Proposition \ref{prop:prop2}(i) we obtain $\overline{\theta}(h)(x)=F(x)$. This proves
$\overline{\theta}(h)=F$.

\noindent(ii) is proved dually, by using Corollary \ref{cor:maxeqclexists} and Proposition \ref{prop:prop2}(ii).
\end{proof}

\begin{proposition}
\label{prop:famaxfy}
Suppose that the relation $\theta$ and the
fuzzy sets $f,g\in\mathcal{F}(U)$ have a finite range, and let $F=\overline
{\theta}(f)$, $G=\underline{\theta}(g)$.

\noindent(i) If $E$ is a maximal $E(F)$ class, then for any $a\in E$
we have

$F(a)=\ $max$\{f(y)\mid y\in E\}$.

\noindent(ii) If $\mathcal{E}$ is a maximal $\varepsilon (G)$ class,
then for any $a\in\mathcal{E}$ we have

$G(a)=\ $min$\{g(y)\mid y\in\mathcal{E}\}$.
\end{proposition}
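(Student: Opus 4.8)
The plan is to prove each equality by establishing the two inequalities separately, and to exploit the finite-range hypothesis in the essential way: since $\theta$ and $f$ (resp.\ $g$) take only finitely many values, the set $\{\theta(a,y)\odot f(y)\mid y\in U\}$ (resp.\ $\{\theta(a,y)\vartriangleright g(y)\mid y\in U\}$) is a finite subset of $[0,1]$, so the supremum defining $F(a)$ (resp.\ the infimum defining $G(a)$) is actually attained at some index $y_{0}\in U$. Part (ii) will be entirely dual to part (i), so I describe (i) in detail and then indicate the changes for (ii).

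For (i), fix a maximal $E(F)$ class $E$ and an element $a\in E$. The inequality $F(a)\geq\max\{f(y)\mid y\in E\}$ is the easy direction: since $\theta$ is reflexive we have $f\leq\overline{\theta}(f)=F$, and by Lemma \ref{lem:assertions}(i) the value $F$ is constant on $E$, equal to $F(a)$. Hence for every $y\in E$ we get $f(y)\leq F(y)=F(a)$, which yields $\max\{f(y)\mid y\in E\}\leq F(a)$. For the reverse inequality I use attainment: write $F(a)=\theta(a,y_{0})\odot f(y_{0})$ for some $y_{0}\in U$. I claim $y_{0}\in E$. Indeed, by Lemma \ref{lem:assertions}(iii), for every $z\notin E$ one has $\theta(a,z)\odot F(z)<F(a)$; combining this with $f\leq F$ and the monotonicity of $\odot$ gives $\theta(a,z)\odot f(z)\leq\theta(a,z)\odot F(z)<F(a)$ for all $z\notin E$. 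So if $y_{0}$ lay outside $E$ we would obtain $F(a)=\theta(a,y_{0})\odot f(y_{0})<F(a)$, a contradiction. Thus $y_{0}\in E$, and since $\theta(a,y_{0})\leq 1$ we conclude $F(a)=\theta(a,y_{0})\odot f(y_{0})\leq f(y_{0})\leq\max\{f(y)\mid y\in E\}$. The two inequalities give $F(a)=\max\{f(y)\mid y\in E\}$.

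For (ii), fix a maximal $\varepsilon(G)$ class $\mathcal{E}$ and $a\in\mathcal{E}$. Reflexivity of $\theta$ now gives $G=\underline{\theta}(g)\leq g$, and by Lemma \ref{lem:assertions}(ii) the value $G$ is constant on $\mathcal{E}$; hence for every $y\in\mathcal{E}$ we have $G(a)=G(y)\leq g(y)$, so $G(a)\leq\min\{g(y)\mid y\in\mathcal{E}\}$. For the opposite inequality, attainment provides $y_{0}$ with $G(a)=\theta(a,y_{0})\vartriangleright g(y_{0})$. By Lemma \ref{lem:assertions}(iv), for every $z\notin\mathcal{E}$ we have $G(a)<\theta(a,z)\vartriangleright G(z)$, and since $G\leq g$ and $\vartriangleright$ is increasing in the second argument this gives $G(a)<\theta(a,z)\vartriangleright g(z)$ for all $z\notin\mathcal{E}$; therefore $y_{0}\in\mathcal{E}$. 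Finally, because $\vartriangleright$ is a border implicator and decreasing in its first argument, $G(a)=\theta(a,y_{0})\vartriangleright g(y_{0})\geq 1\vartriangleright g(y_{0})=g(y_{0})\geq\min\{g(y)\mid y\in\mathcal{E}\}$, completing the proof.

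The only genuine subtlety, and the step I would treat most carefully, is the attainment of the supremum/infimum: it is what converts the \emph{strict} inequalities of Lemma \ref{lem:assertions}(iii),(iv) into the statement that the optimizing index must lie \emph{inside} the maximal class. Without a finite range the optimum might only be approached by a sequence of indices escaping the class, and the argument would break; here the finiteness of $\{\theta(a,y)\odot f(y)\mid y\in U\}$ (a subset of the finite set produced in the proof of Lemma \ref{lem:frange}) is precisely what rules this out. The remaining ingredients — $f\leq F\leq$ and $G\leq g$ from reflexivity, and the constancy of $F$ (resp.\ $G$) on the class — are routine.
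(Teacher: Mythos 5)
Your proof is correct and takes essentially the same route as the paper's: both rest on Lemma \ref{lem:assertions}(i)--(iv) together with $f\leq F$ and $G\leq g$, and both use the finite ranges of $\theta$, $f$, $g$ to convert the strict inequalities $\theta(a,z)\odot F(z)<F(a)$ and $G(a)<\theta(a,z)\vartriangleright G(z)$ for $z$ outside the maximal class into control of the supremum/infimum. The only difference is packaging: the paper splits the sup (resp.\ inf) over the class and its complement and discards the complement part, whereas you phrase the same finiteness step as attainment of the optimum at some $y_{0}$ which is then forced into the class.
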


\begin{proof} (i) By definition, $F(a)=\overline{\theta
}(f)(a)=\bigvee\{\theta(a,y)\odot f(y)\mid y\in U\}$. If $y\notin E$, then
$(a,y)\notin R(F)$, because $E$ is a maximal $E(F)$ class. This means
that $F(a)=\theta(a,y)\odot F(y)$ is not possible, and hence $F(a)>\theta
(a,y)\odot F(y)$, according to Proposition \ref{prop:prop1}(iii). Since $f\leq
\overline{\theta}(f)=F$, we obtain $F(a)>\theta(a,y)\odot f(y)$, for all $y\in
U\setminus E$. As $\theta$ and $f$ are of a finite range, the set
$\{\theta(a,y)\odot f(y)\mid y\in U\setminus E\}$ has finitely many different
elements, and hence $\bigvee\{\theta(a,y)\odot f(y)\mid y\in U\setminus
E\}<F(a)$. This implies \medskip

$F(a)=(\bigvee\{\theta(a,y)\odot f(y)\mid y\in E\})\vee\left(  \bigvee
\{\theta(a,y)\odot f(y)\mid y\in U\setminus E\}\right)  =$

$\bigvee\{\theta(a,y)\odot f(y)\mid y\in E\}$.\medskip

\noindent If $y\in E$, then $F(y)=F(a)$. As $\theta(a,y)\odot
f(y)\leq f(y)\leq F(y)=F(a)$, we obtain:

$F(a)=$ $\bigvee\{\theta(a,y)\odot f(y)\mid y\in E\}\leq\bigvee\{f(y)\mid y\in
E\}\leq F(a)$.

\noindent This implies $F(a)=$ $\bigvee\{f(y)\mid y\in E\}$. Because $f$ has a
finite range, the set $\{f(y)\mid y\in E\}$ is finite, and hence we can write $F(a)=\ $max$\{f(y)\mid y\in E\}$.

\noindent(ii) By definition $G(a)=\underline{\theta}(g)(a)=\bigwedge
\{\theta(a,y)\vartriangleright g(y)\mid y\in U\}$. If $y\notin\mathcal{E}$,
then $(a,y)\notin\varrho(G)$, because $\mathcal{E}$ is a maximal
$\varepsilon (G)$ class, and hence $G(a)\neq\theta
(a,y)\vartriangleright G(y)$. Thus we have $G(a)<\theta(a,y)\vartriangleright
G(y)$, according to Proposition \ref{prop:prop1}(iii). Since $G=\underline{\theta}(g)\leq
g$, we obtain $G(a)<\theta(a,y)\vartriangleright g(y)$, for all $y\in
U\setminus\mathcal{E}$. As $\theta$ and $g$ are of a finite range, the set
$\{\theta(a,y)\vartriangleright g(y)\mid y\in U\setminus E\}$ is finite,
whence we get $G(a)<\bigwedge\{\theta(a,y)\odot g(y)\mid y\in U\setminus E\}$.
This yields

$G(a)=(\bigwedge\{\theta(a,y)\vartriangleright g(y)\mid y\in\mathcal{E}%
\})\wedge(\bigwedge\{\theta(a,y)\vartriangleright g(y)\mid y\in U\setminus
\mathcal{E}\})=$

$\bigwedge\{\theta(a,y)\vartriangleright g(y)\mid y\in\mathcal{E}\}$.

\noindent If $y\in\mathcal{E}$, then $G(y)=G(a)$. Since $\theta
(a,y)\vartriangleright g(y)\geq1\vartriangleright g(y)=g(y)\geq G(y)=G(a)$, we obtain

$G(a)=\bigwedge\{\theta(a,y)\vartriangleright g(y)\mid y\in\mathcal{E}%
\}\geq\bigwedge\{g(y)\mid y\in\mathcal{E}\}\geq G(a)$, $\ $

\noindent and this implies $G(a)=\bigwedge\{g(y)\mid y\in\mathcal{E}\}$. Since
$\{g(y)\mid y\in\mathcal{E}\}$ is a finite set, we can write: $G(a)=\ $%
min$\{g(y)\mid y\in\mathcal{E}\}$. \end{proof}

The following corollary is immediate:

\begin{corollary}
\label{cor:fafagaga}
Assume that $\theta$ and $f,g\in
\mathcal{F}(U)$ are of a finite range, and let $a\in U$ and $F=\overline
{\theta}(f)$, $G=\underline{\theta}(g)$.

\noindent(i) If $\{a\}$ is a maximal $E(F)$ class, then $F(a)=f(a)$.

\noindent(ii) If $\{a\}$ is a maximal $\varepsilon (G)$ class, then
$G(a)=g(a)$.
\end{corollary}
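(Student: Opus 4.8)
The plan is to derive both statements directly from Proposition~\ref{prop:famaxfy} by specializing its maximal classes to singletons. For part (i), I would take the maximal $E(F)$ class appearing in Proposition~\ref{prop:famaxfy}(i) to be the singleton $E=\{a\}$. The hypotheses of the corollary guarantee that $\theta$ and $f$ have a finite range, which is exactly what Proposition~\ref{prop:famaxfy} requires, so the formula $F(a)=\max\{f(y)\mid y\in E\}$ is available. Since the only element of $E=\{a\}$ is $a$ itself, the maximum is taken over the one-element set $\{f(a)\}$, and therefore $F(a)=\max\{f(a)\}=f(a)$.

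For part (ii), I would proceed symmetrically, taking the maximal $\varepsilon(G)$ class of Proposition~\ref{prop:famaxfy}(ii) to be $\mathcal{E}=\{a\}$. Because $\theta$ and $g$ are assumed of finite range, that proposition yields $G(a)=\min\{g(y)\mid y\in\mathcal{E}\}$, and evaluating the minimum over the singleton $\{a\}$ gives $G(a)=\min\{g(a)\}=g(a)$.

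There is no genuine obstacle here, which is why the statement is flagged as immediate: each part is simply the $|E|=1$ (respectively $|\mathcal{E}|=1$) instance of the preceding proposition. The only point worth verifying is a bookkeeping one, namely that the finite-range assumptions on $\theta$, $f$ and $g$ stated in the corollary coincide with those under which Proposition~\ref{prop:famaxfy} was proved, so that the $\max$ and $\min$ are well defined and the appeal to that proposition is fully justified.
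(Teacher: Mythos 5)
Your proof is correct and matches the paper exactly: the paper states the corollary is ``immediate'' from Proposition~\ref{prop:famaxfy}, and the intended derivation is precisely your specialization of the maximal classes to the singletons $E=\{a\}$ and $\mathcal{E}=\{a\}$, under the same finite-range hypotheses.
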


\begin{corollary}
\label{cor:fugufvgv}
Assume that $\theta$ and $f\in\mathcal{F}(U)$
are of a finite range, $F=\overline{\theta}(f)$, $G=\underline{\theta}(f)$,
and let $E$ be a maximal $E(F)$ class and $\mathcal{E}$ a maximal
$\varepsilon (G)$ class. 
\noindent(i) If every $\{x\}\subseteq E$ is a maximal $\varepsilon
(G)$ class, then there exists a $u\in E$ with $F(u)=G(u)$.

\noindent(ii) If every $\{x\}\subseteq\mathcal{E}$ is a maximal
$E(F)$ class, then there exists a $v\in\mathcal{E}$ with $F(v)=G(v)$.
\end{corollary}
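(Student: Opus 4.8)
The plan is to exploit the fact that in this corollary both approximations are taken of the \emph{same} reference set $f$, so that $g=f$ in Proposition \ref{prop:famaxfy}(ii) and in Corollary \ref{cor:fafagaga}. Everything then reduces to locating, inside a maximal class of one type, an element whose $f$-value realizes the relevant extremum, and noting that this element is simultaneously a singleton maximal class of the other type.

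For part (i) I would proceed as follows. Since $E$ is a maximal $E(F)$ class, Proposition \ref{prop:famaxfy}(i) tells us that $F$ is constant on $E$ with value $\max\{f(y)\mid y\in E\}$; the maximum is attained because $f$ has a finite range. Pick $u\in E$ at which this maximum is realized, so that $F(u)=f(u)$. By hypothesis $\{u\}\subseteq E$ is a maximal $\varepsilon(G)$ class, and since $G=\underline{\theta}(f)$ (i.e. $g=f$), Corollary \ref{cor:fafagaga}(ii) gives $G(u)=f(u)$. Combining the two equalities yields $F(u)=f(u)=G(u)$, which is the desired element $u$.

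Part (ii) is the dual argument. Applying Proposition \ref{prop:famaxfy}(ii) to the maximal $\varepsilon(G)$ class $\mathcal{E}$ (again with $g=f$), the function $G$ is constant on $\mathcal{E}$ with value $\min\{f(y)\mid y\in\mathcal{E}\}$, and I would choose $v\in\mathcal{E}$ attaining this minimum, so $G(v)=f(v)$. Since $\{v\}\subseteq\mathcal{E}$ is by hypothesis a maximal $E(F)$ class, Corollary \ref{cor:fafagaga}(i) gives $F(v)=f(v)$, whence $F(v)=f(v)=G(v)$.

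There is essentially no serious obstacle once the earlier lemmas are available; the corollary is a direct bookkeeping consequence of Proposition \ref{prop:famaxfy} and Corollary \ref{cor:fafagaga}. The only point deserving attention is conceptual: one must recognize that the chosen extremal element serves a double role, being the point of $E$ where $f$ meets $F$ from below (since $f\le F$) and, as a singleton maximal class of the complementary quasiorder, a point where $f$ meets $G$ from above (since $G\le f$). The finite-range hypothesis on $\theta$ and $f$ is used only to guarantee that the maximum (respectively minimum) of $f$ over the class is attained at an actual element of $U$.
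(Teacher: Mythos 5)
Your proposal is correct and follows essentially the same route as the paper: apply Proposition \ref{prop:famaxfy} (with $g:=f$) to locate an extremal element $u$ (resp.\ $v$) of the maximal class where $F(u)=f(u)$ (resp.\ $G(v)=f(v)$), then use Corollary \ref{cor:fafagaga} on the singleton maximal class of the other type to conclude $F=G$ at that point. The paper's proof disposes of (ii) with the single word ``dually,'' which is exactly the dual argument you wrote out explicitly.
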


\begin{proof} (i) In view of Proposition \ref{prop:famaxfy}(i), for each $x\in E$
we have $F(x)=\ $max$\{f(y)\mid y\in E\}$, i.e. $F(x)=f(u)$, for some $u\in
E$. As $\{u\}$ is a maximal $\varepsilon (G)$ class, by applying
Corollary \ref{cor:fafagaga}(ii) with $g:=f$ we get $G(u)=f(u)$. Hence $F(u)=G(u)$.
(ii) is proved dually. \end{proof}

\begin{example}
\label{ex:factor}
Let us consider the similarity relation $\theta $, a fuzzy set $h$ and its approximations $F=\overline{\theta}(h)$, $G=\underline{\theta}(h)$ given on Figure \ref{fig:theta2} and Table \ref{tab:fuzzy2}.

\begin{figure}[H]
    \centering
    \begin{tikzpicture}
    \draw (-2, 2.3) node {$a$};
    \draw (-2, -0.3) node {$b$};
    \draw (1, 2.3) node {$c$};
    \draw (1, -0.3) node {$d$};
    \draw (3, 2.3) node {$e$};
    \draw (3, -0.3) node {$f$};

    \draw (-2, 0) -- (-2, 2);
    \draw[dashed] (-2, 0) -- (1, 2) -- (-2, 2);
    \draw[dashed] (-2, 0) -- (1, 0) -- (-2, 2);
    \draw[dashed] (3, 0) -- (3, 2);
    \draw[dashed] (1, 0) -- (1, 2);
    
    \draw[fill=white] (-2, 0) circle [radius=2pt];
    \draw[fill=white] (3, 0) circle [radius=2pt];
    \draw[fill=white] (1, 2) circle [radius=2pt];
    \draw[fill=white] (1, 0) circle [radius=2pt];
    \draw[fill=white] (-2, 2) circle [radius=2pt];
    \draw[fill=white] (3, 2) circle [radius=2pt];

    \draw (-2.3, 1) node {$1$};
    \draw (-0.5, -0.3) node {$0.25$};
    \draw (-0.5, 2.3) node {$0.75$};
    \draw (1.5, 1) node {$0.25$};
    \draw (-1.5, 0.75) node {$0.75$};
    \draw (0.5, 0.75) node {$0.25$};
    \draw (3.4, 1) node {$0.5$};
    \end{tikzpicture}
    \caption{The fuzzy similarity relation $\theta$ of Example \ref{ex:factor}}
    \label{fig:theta2}
\end{figure}
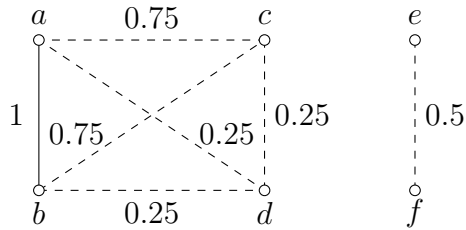

\begin{table}[H]
    \centering
    \begin{tabular}{|P{0.75cm}|P{0.75cm}|P{0.75cm}|P{0.75cm}|P{0.75cm}|P{0.75cm}|P{0.75cm}|}
        \hline
         $u$ & $a$ & $b$ & $c$ & $d$ & $e$ & $f$  \\ \hline
         $h(u)$ & 0 & 1 & 0.25 & 0.5 & 0.5 & 0.75  \\ \hline
         $F(u)$ & 1 & 1 & 0.75 & 0.5 & 0.5 & 0.75 \\ \hline
         $G(u)$ & 0 & 0 & 0.25 & 0.5 & 0.5 & 0.5 \\ \hline
    \end{tabular}
    \caption{The fuzzy set $h$ of Example \ref{ex:factor}}
    \label{tab:fuzzy2}
\end{table}

\noindent The quasiorders $R(F)$ and
$\varrho(G)$, their $E(F)$ and $\varepsilon (G)$ equivalence classes and the partial orders induced on the factor-sets are given in Figure \ref{fig:factor2}.

\begin{figure}[H]
    \centering
    \begin{tikzpicture}
    \draw (-2, 2.3) node {$a$};
    \draw (-2, -0.3) node {$b$};
    \draw (0, 2.3) node {$c$};
    \draw (0, -0.3) node {$d$};
    \draw (2, 2.3) node {$e$};
    \draw (2, -0.3) node {$f$};

    \draw (0, -1) node {$\varrho(G)$};

    \draw (4, 2.3) node {$a$};
    \draw (4, -0.3) node {$b$};
    \draw (6, 2.3) node {$c$};
    \draw (6, -0.3) node {$d$};
    \draw (8, 2.3) node {$e$};
    \draw (8, -0.3) node {$f$};

    \draw (6, -1) node {$R(F)$};
    
    \draw[latex-latex] (-2, 0.1) -- (-2, 1.9);
    \draw[-latex] (0, 2) -- (-1.9, 2);
    \draw[-latex] (0, 2) -- (-1.9, 0.1);
    \draw[latex-latex] (2, 0.1) -- (2, 1.9);

    \draw[-latex] (0, 0) -- (-1.9, 1.9);
    \draw[-latex] (0, 0) -- (-1.9, 0);
    \draw[-latex] (0, 0) -- (0, 1.9);

    \draw[latex-latex] (4, 0.1) -- (4, 1.9);
    \draw[-latex] (6, 2) -- (4.1, 2);
    \draw[-latex] (6, 2) -- (4.1, 0.1);
    \draw[latex-] (8, 0.1) -- (8, 2);

    \draw[-latex] (6, 0) -- (4.1, 1.9);
    \draw[-latex] (6, 0) -- (4.1, 0);
    \draw[-latex] (6, 0) -- (6, 1.9);
    
    \draw[fill=white] (-2, 0) circle [radius=2pt];
    \draw[fill=white] (2, 0) circle [radius=2pt];
    \draw[fill=white] (0, 0) circle [radius=2pt];
    \draw[fill=white] (0, 2) circle [radius=2pt];
    \draw[fill=white] (-2, 2) circle [radius=2pt];
    \draw[fill=white] (2, 2) circle [radius=2pt];

    \draw[fill=white] (4, 0) circle [radius=2pt];
    \draw[fill=white] (8, 0) circle [radius=2pt];
    \draw[fill=white] (6, 0) circle [radius=2pt];
    \draw[fill=white] (6, 2) circle [radius=2pt];
    \draw[fill=white] (4, 2) circle [radius=2pt];
    \draw[fill=white] (8, 2) circle [radius=2pt];

    \draw[rounded corners][color=blue] (-2.5, 2.5) rectangle (-1.5, -0.5) {};
    \draw[rounded corners][color=blue] (0.5, 1.5) rectangle (-0.5, 2.5) {};
    \draw[rounded corners][color=blue] (0.5, -0.5) rectangle (-0.5, 0.5) {};
    \draw[rounded corners][color=blue] (2.5, 2.5) rectangle (1.5, -0.5) {};

    \draw[rounded corners][color=red] (3.5, 2.5) rectangle (4.5, -0.5) {};
    \draw[rounded corners][color=red] (6.5, 1.5) rectangle (5.5, 2.5) {};
    \draw[rounded corners][color=red] (6.5, -0.5) rectangle (5.5, 0.5) {};
    \draw[rounded corners][color=red] (8.5, 1.5) rectangle (7.5, 2.5) {};
    \draw[rounded corners][color=red] (8.5, -0.5) rectangle (7.5, 0.5) {};

    \draw[color=blue] (-2.8, 1) node {$\mathcal{E}_1$};
    \draw[color=blue] (0.8, 2) node {$\mathcal{E}_2$};
    \draw[color=blue] (0.8, 0) node {$\mathcal{E}_3$};
    \draw[color=blue] (1.25, 1) node {$\mathcal{E}_4$};

    \draw[color=red] (3.2, 1) node {$E_1$};
    \draw[color=red] (6.8, 2) node {$E_2$};
    \draw[color=red] (6.8, 0) node {$E_3$};
    \draw[color=red] (8.8, 2) node {$E_4$};
    \draw[color=red] (8.8, 0) node {$E_5$};
    
    \draw (-1, -2) -- (-1, -3) -- (-1, -4);

    \draw[fill=white] (-1, -2) circle [radius=2pt];
    \draw[fill=white] (-1, -3) circle [radius=2pt];
    \draw[fill=white] (-1, -4) circle [radius=2pt];
    \draw[fill=white] (1, -3) circle [radius=2pt];

    \draw [color=blue] (-1.3, -2) node {$\mathcal{E}_1$};
    \draw [color=blue] (-1.3, -3) node {$\mathcal{E}_2$};
    \draw [color=blue] (-1.3, -4) node {$\mathcal{E}_3$};
    \draw [color=blue] (1.3, -3) node { $\mathcal{E}_4$};

    \draw (5, -2) -- (5, -3) -- (5, -4);
    \draw (7, -2.5) -- (7, -3.5);

    \draw[fill=white] (5, -2) circle [radius=2pt];
    \draw[fill=white] (5, -3) circle [radius=2pt];
    \draw[fill=white] (5, -4) circle [radius=2pt];
    \draw[fill=white] (7, -2.5) circle [radius=2pt];
    \draw[fill=white] (7, -3.5) circle [radius=2pt];

    \draw[color=red] (4.65, -2) node {$E_1$};
    \draw[color=red] (4.65, -3) node {$E_2$};
    \draw[color=red] (4.65, -4) node {$E_3$};
    \draw[color=red] (7.35, -3.5) node {$E_4$};
    \draw[color=red] (7.35, -2.5) node {$E_5$};
    
    \end{tikzpicture} 
    \caption{The quasiorders, the factor-sets and their Hasse-diagrams for Example \ref{ex:factor}}
    \label{fig:factor2}
\end{figure}
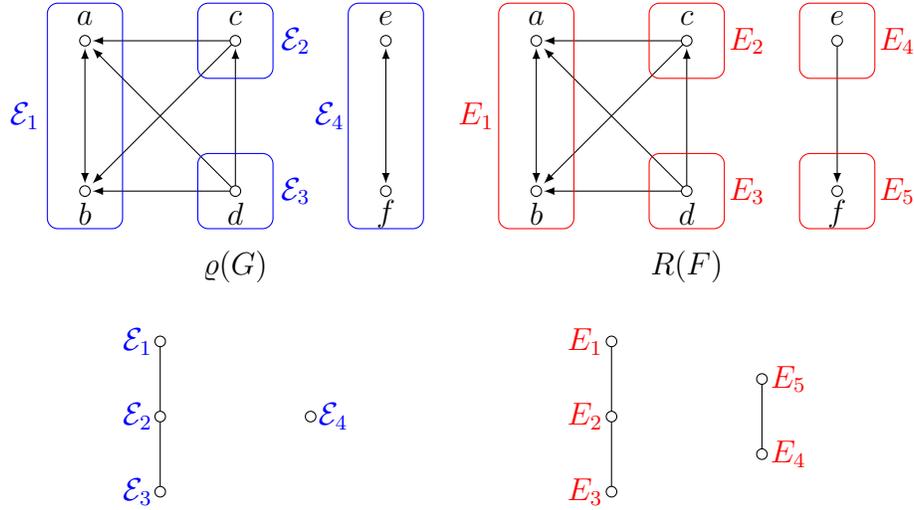
\noindent  Loops are not drawn for any relation. As $\theta$ is symmetric, its edges are undirected, and those with $\theta(x,y)=0$ are not shown either. The maximal $\varepsilon (G)$ classes are $ \mathcal{E}_1$, $ \mathcal{E}_4$, and the maximal $E(F)$ classes are $E_1 $ and $E_5 $. 
In all our examples the approximations are defined by a min t-norm and the KD-implicator max$(1-x,y)$. Clearly, all statements in \ref{lem:assertions}, \ref{cor:assertions}, \ref{cor:fugufvgv} hold.
\end{example}

\section{A characterization of fuzzy rough sets}
\label{sec:frs}

In case of an equivalence $\varrho\subseteq U\times U$, the sets $\{X\subseteq
U\mid X_{\varrho}=X\}$ and $\{X\subseteq U\mid X^{\varrho}=X\}$ coincide and
their members are called $\varrho$\emph{-definable} subsets of $U$. They can
be described as those subsets of $U$ which are the union of some
$\varrho$-equivalence classes, and their set is denoted by Def$(U,\varrho
)$. The rough sets induced by an equivalence relation $\varrho\subseteq
U\times U$ can be characterized by using the \emph{set of} its \emph{singletons}
$S=\{s\in U\mid\varrho(s)=\{s\}\}$, as follows: 

\medskip
\noindent $(A,B)$ is rough set of $\varrho$, if and only if $(A,B)\in\ $Def$(U,\varrho)\times\ $%
Def$(U,\varrho)$, $A\subseteq B$ and $A\cap S=B\cap S$ (see e.g. \cite{JPR}).

\medskip

\noindent In this section we will derive an analogous characterisation for the
fuzzy rough sets with finite ranges and satisfying conditions in (ID). For a fuzzy
approximation space $(U,\theta)$ we will introduce the notations:%
\[
\text{Fix}(\underline{\theta})=\{f\in\mathcal{F}(U)\mid\underline{\theta
}(f)=f\}\text{, Fix}(\overline{\theta})=\{f\in\mathcal{F}(U)\mid
\overline{\theta}(f)=f\}\text{.}%
\]

\noindent Unfortunately, in case of a $\odot$-similarity relation
Fix$(\underline{\theta})$ and Fix$(\overline{\theta})$ coincide only for 
a left-continuous t-norm $\odot$ and
the R-implicator $\vartriangleright$ induced by it.

\begin{theorem}
\label{thm:frs}
Assume that conditions in (ID) are satisfied
and let $(U,\theta)$ be a fuzzy approximation space with a $\odot$-similarity
relation $\theta$ of a finite range, and $F,G\in\mathcal{F}(U)$. Then $(F,G)$
is a fuzzy rough set induced by a fuzzy set with a finite range, if and only if
the following conditions hold:

\noindent (1) $G\in\ $Fix$(\underline{\theta})$, $F\in\ $Fix$(\overline{\theta})$,
$G\leq F$, and $F$ and $G$ have finite ranges;

\noindent (2) If $\mathcal{E}$ is a maximal $\varepsilon (G)$ class such that each
$\{a\}\subseteq\mathcal{E}$ is a maximal $E(F)$

class, then there exists an element $u\in\mathcal{E}$ such that $G(u)=F(u)$;

\noindent (3) If $E$ is a maximal $E(F)$ class such that each $\{a\}\subseteq E$ is a
maximal $\varepsilon (G)$

class, then there exists an element $v\in E$ such that $G(v)=F(v)$.
\end{theorem}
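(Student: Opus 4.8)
The plan is to prove the two directions separately, exploiting the machinery of the quasiorders $R(F)$, $\varrho(G)$ and their maximal classes developed in the preceding sections.

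\medskip

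\noindent\textbf{Necessity.} Suppose $(F,G)=(\overline{\theta}(f),\underline{\theta}(f))$ for some fuzzy set $f$ of finite range; here I use that a fuzzy rough set is the approximation pair of a \emph{single} reference set, so $F=\overline{\theta}(f)$ and $G=\underline{\theta}(f)$ with the \emph{same} $f$. Condition (1) is then immediate: by property (ID) we have $\overline{\theta}(F)=F$ and $\underline{\theta}(G)=G$, so $F\in\mathrm{Fix}(\overline\theta)$ and $G\in\mathrm{Fix}(\underline\theta)$; the inequality $G=\underline\theta(f)\le f\le \overline\theta(f)=F$ follows from reflexivity of $\theta$; and finiteness of the ranges of $F$ and $G$ follows from Lemma \ref{lem:frange}. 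For (2), let $\mathcal E$ be a maximal $\varepsilon(G)$ class all of whose singletons $\{a\}$ are maximal $E(F)$ classes. This is precisely the hypothesis of Corollary \ref{cor:fugufvgv}(ii) (with $g:=f$), which yields a $v\in\mathcal E$ with $F(v)=G(v)$; taking $u:=v$ gives (2). Condition (3) is the symmetric statement, delivered by Corollary \ref{cor:fugufvgv}(i).

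\medskip

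\noindent\textbf{Sufficiency.} Assume (1)--(3) and set $f:=F\wedge G$ — actually, since $G\le F$, the natural reference candidate is to take $h$ squeezed between $G$ and $F$ and show $\overline\theta(h)=F$ and $\underline\theta(h)=G$ simultaneously. The strategy is to build a single $h$ with $G\le h\le F$ such that $h$ agrees with $F$ on (at least) one point of every maximal $E(F)$ class and agrees with $G$ on (at least) one point of every maximal $\varepsilon(G)$ class; then Proposition \ref{prop:hufuhvgv}(i) gives $\overline\theta(h)=F$ and Proposition \ref{prop:hufuhvgv}(ii) gives $\underline\theta(h)=G$, so $(F,G)$ is the fuzzy rough set of $h$ (finite range by Lemma \ref{lem:frange}). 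I would define $h$ pointwise by deciding, for each maximal class, where to pin it to $F$ or to $G$. The key structural tool is Corollary \ref{cor:assertions}(i): when a maximal $E(F)$ class and a maximal $\varepsilon(G)$ class meet, one contains the other, which prevents conflicting demands on overlapping classes and lets me choose the pinning consistently.

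\medskip

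\noindent\textbf{The main obstacle} is exactly this consistency of the construction of $h$: on a maximal $E(F)$ class $E$ I want a point $u$ with $h(u)=F(u)$, while on a maximal $\varepsilon(G)$ class $\mathcal E$ I want a point $v$ with $h(v)=G(v)$, and these requirements can collide when the classes intersect. The generic situation is resolved by Corollary \ref{cor:assertions}(i) (nesting) together with Proposition \ref{prop:famaxfy} (which identifies $F$ and $G$ on maximal classes as a max of $f$-values, resp.\ min of $g$-values, on the class). The genuinely delicate case is when a maximal class degenerates into singletons of the opposite type — precisely the configurations isolated in hypotheses (2) and (3); there a single point must simultaneously carry $h(u)=F(u)$ and $h(u)=G(u)$, which is possible \emph{only} because (2)/(3) guarantee a point with $F(u)=G(u)$. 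Thus the verification reduces to a careful case analysis of how maximal $E(F)$ and maximal $\varepsilon(G)$ classes overlap, showing that in every case one can select the pinning points so that $G\le h\le F$ holds globally while meeting every maximal class of both kinds; conditions (2) and (3) are exactly what make the otherwise obstructed singleton-overlap cases feasible.
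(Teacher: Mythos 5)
Your necessity direction is complete and is exactly the paper's argument: condition (1) from (ID), reflexivity and Lemma \ref{lem:frange}, and conditions (2), (3) from Corollary \ref{cor:fugufvgv} applied to the common reference set $f$. Your sufficiency direction, however, is a plan rather than a proof, and the missing part is precisely where the mathematical work lies. You never define the reference set $h$: you say you will ``pin'' $h$ to $F$ at a point of each maximal $E(F)$ class and to $G$ at a point of each maximal $\varepsilon(G)$ class, and that the verification ``reduces to a careful case analysis,'' but that case analysis is the proof. The paper's construction selects \emph{exactly one} element $a_t$ from each maximal $\varepsilon(G)$ class $\mathcal{E}_t$ according to a priority rule (first an element lying in no maximal $E(F)$ class; failing that, an element with $F=G$; failing that, an element whose singleton is not a maximal $E(F)$ class), sets $f=G$ on the selected points and $f=F$ elsewhere, and then must prove that every maximal $E(F)$ class still contains a point where $f=F$. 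Moreover, you misidentify the main obstacle: it is not only the singleton-collision case that (2)/(3) handle, but the danger that a maximal $E(F)$ class $E_M$ with $|E_M|\geq 2$ is entirely swallowed by the selected set, with $f=G<F$ on all of it. Excluding this is the bulk of the paper's argument: one shows such an $E_M$ would contain some $a_{s^\ast}$ chosen from a class $\mathcal{E}_{s^\ast}$ with $|\mathcal{E}_{s^\ast}|\geq 2$ (using hypothesis (3) to rule out the all-singletons case), and then Corollary \ref{cor:assertions}(i) forces $E_M\subseteq\mathcal{E}_{s^\ast}$ or $\mathcal{E}_{s^\ast}\subseteq E_M$; either inclusion, combined with the pairwise disjointness of distinct maximal $\varepsilon(G)$ classes, would mean two elements were selected from $\mathcal{E}_{s^\ast}$, contradicting the one-per-class selection. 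None of this appears in your proposal, and the nesting corollary alone does not ``prevent conflicting demands'' without the priority rule.

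Two smaller points. First, in the sufficiency direction you cannot invoke Corollary \ref{cor:fugufvgv} itself, since $F$ and $G$ are not yet known to be approximations of a common reference set; hypotheses (2) and (3) are assumed exactly to replace it, and you do use them in that role, which is correct. Second, your opening move ``set $f:=F\wedge G$'' is vacuous (since $G\leq F$ this is just $G$) and you rightly abandon it, but it signals that the actual construction had not been found. In summary: right strategy, right auxiliary results (Propositions \ref{prop:hufuhvgv}, \ref{prop:famaxfy}, Corollary \ref{cor:assertions}), but the explicit selection scheme and the contradiction argument establishing its consistency --- the core of the paper's proof --- are missing.
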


\begin{proof} By definition, $(F,G)$ is a fuzzy rough set if there
exists a map $f\in\mathcal{F}(U)$ such that $F=\overline{\theta}(f)$,
$G=\underline{\theta}(f)$. Suppose that $f$ has a finite range. We prove that
the conditions of Theorem \ref{thm:frs} are satisfied. Indeed,

(1) Property (ID) implies $\overline{\theta}(F)=F$, $\underline{\theta}(G)=G$, hence $F\in
\ $Fix$(\overline{\theta})$ and $G\in\ $Fix$(\underline{\theta})$. Clearly,
$G=\underline{\theta}(f)\leq\overline{\theta}(f)=F$. Because $f$ has a finite
range, in view of Lemma \ref{lem:frange}, $F$ and $G$ also have finite ranges.

\noindent In view of Corollary \ref{cor:fugufvgv}, conditions (2) and (3) are also satisfied.

Conversely, suppose that conditions (1), (2) and (3) are satisfied by $F$ and
$G$. In order to prove that $(F,G)$ is a fuzzy rough set, we will construct a
fuzzy set $f\in\mathcal{F}(U)$ with $F=\overline{\theta}(f)$, $G=\underline
{\theta}(f)$.

Since $F$ and $G$ are of a finite range, in view of Corollary \ref{cor:maxeqclexists}, for each
$E(F)$ class $E$ there exists a maximal $E(F)$ class
$E_{M}$ such that $E\leq_{R(F)}E_{M}$, and for any $\varepsilon (G)$
class $\mathcal{E}$ there is a maximal $\varepsilon (G)$ class
$\mathcal{E}_{M}$ with $\mathcal{E}\leq_{\varrho(G)}\mathcal{E}_{M}$. Denote the
family of maximal $\varepsilon (G)$ classes by $\{\mathcal{E}_{t}\mid
t\in T\}$. As a first step, from each class $\mathcal{E}_{t}$, $t\in T$ we
select exactly one element $a_{t}\in\mathcal{E}_{t}$ as follows:

\noindent 1) If $\mathcal{E}_{t}$ contains an element $p_{t}$ which does not belong to
any maximal $E(F)$ class, then we select it and set $a_{t}:=p_{t}$.

\noindent 2) If in $\mathcal{E}_{t}$ there is no element of type 1), however there
exists an element $q_{t}\in\mathcal{E}_{t}$ with $G(q_{t})=F(q_{t})$, then we
select it and set $a_{t}:=q_{t}$.

\noindent 3) If there are no elements of type 1) or 2) in $\mathcal{E}_{t}$, then we
select an element $r_{t}\in\mathcal{E}_{t}$ such that $\{r_{t}\}$ is not a
maximal $E(F)$ class, and we set $a_{t}:=r_{t}$.

\medskip

First, we show that we can always effectuate such a selection: assume
by contradiction that in some class $\mathcal{E}_{t}$ there are no elements
of type 1), 2) or 3). This means that for each $a_{t}\in\mathcal{E}_{t}$ the
set $\{a_{t}\}$ is a maximal $E(F)$ class. Then by Corollary \ref{cor:fugufvgv}(ii)
there exists an element $v\in\mathcal{E}_{t}$ with $F(v)=G(v)$. Since this
means that $v\in\mathcal{E}_{t}$ is of type 2), this is a contradiction.

As next step, we construct a fuzzy set $f\in\mathcal{F}(U)$ as follows:

\begin{equation}
    f(x)=\left\{
\begin{array}
[c]{l}%
G(x)\text{, if }x\in\{a_{t}\mid t\in T\}\text{;}\\
F(x)\text{, if }x\in U\setminus\{a_{t}\mid t\in T\}
\end{array}
\right. 
\tag{3}
\end{equation}

By its construction, $f$ also has a finite range. Now, we prove that in any
maximal $E(F)$ class $E$ there exists an element $u\in E$ with
$f(u)=F(u)$. By our construction, this would mean that any such class $E$
contains an element $x_{0}\in U\setminus\{a_{t}\mid t\in T\}$ or an element
$a_{t}=q_{t}\in E$ of type 2) with $f(q_{t})=G(q_{t})=F(q_{t})$.

By way of contradiction, assume that there is a maximal $E(F)$
class $E_{M}$ with $E_{M}\subseteq\{a_{t}\mid t\in T\}$ and $F(x)\neq
f(x)=G(x)$, for all $x\in E_{M}$. Then $E_{M}=\{a_{s}\mid s\in S\}$, for some
nonempty $S\subseteq T$. Observe that in this case $E_{M}$ cannot contain
elements of type 1) and 2). Hence, by our construction, for any element
$a_{s}$, $s\in S$ the set $\{a_{s}\}$ is not a maximal $E(F)$ class.
Thus $E_{M}$ is not a one-element set, i.e. $|S|\geq2$. Observe also,
that we can exclude the case when each element $a_{s},s\in S$ belongs to an
$\mathcal{E}_{t}$ class with a single element. Indeed, as in such case each
$\{a_{s}\}\subseteq E_{M}$ would be a maximal $\varepsilon (G)$ class,
and by Corollary \ref{cor:fugufvgv}(i) we would obtain $G(a_{s_{0}})=F(a_{s_{0}})$, for some
$a_{s_{0}}\in E_{M}$, contrary to our assumption. Hence there exists an
element $a_{s^{\ast}}\in E_{M}$ which was chosen from a maximal
$\varepsilon (G)$ class $\mathcal{E}_{s^{\ast}}$ with $|
\mathcal{E}_{s^{\ast}}|\geq2$. Since $a_{s^{\ast}}\in E_{M}\cap
\mathcal{E}_{s^{\ast}}$, in view of Corollary \ref{cor:assertions}(i), we have $E_{M}%
\subseteq\mathcal{E}_{s^{\ast}}$ or $\mathcal{E}_{s^{\ast}}\subseteq E_{M}$.
Since both $E_{M}$ and $\mathcal{E}_{s^{\ast}}$ have at least two elements,
both cases would imply that from the class $\mathcal{E}_{s^{\ast}}$ at least two elements had been
inserted into the set $\{a_{t}\mid t\in T\}$, in
contradiction to our construction for $\{a_{t}\mid t\in T\}$.

Thus we proved that in any maximal $E(F)$ class $E$ there is an
element $u\in E$ with $f(u)=F(u)$. It is also clear, that by our construction
from each maximal $\varepsilon (G)$ class $\mathcal{E}_{t}$, $t\in T$
an element $v=a_{t}\in\mathcal{E}_{t}$ had been selected with $f(v)=G(v)$.
Since by definition $G\leq f\leq F$, applying Proposition \ref{prop:hufuhvgv} we obtain
$F=\overline{\theta}(f)$, $G=\underline{\theta}(f)$, and our proof is
completed. \end{proof}

\section{Further properties of $E(F)$ and $\varepsilon
(G)$ classes}
\label{sec:propofeqc}

In this section we deduce some additional properties of $E(F)$ and $\varepsilon
(G)$ classes which will be used to prove our main Theorem \ref{thm:lattice}.  In the whole section we assume that for all $x,y\in\lbrack0,1]$ condition\
\begin{equation}
x\odot y=\text{min}(x,y)\text{, }x\vartriangleright y:=\ \text{max}%
(n(x),y)\text{, }n\text{ is an involutive negator} \tag{C}%
\end{equation}

\noindent holds, and that $\theta$ is a similarity relation. Then
$\vartriangleright$ is an S-implicator, and for $n(x)=1-x$, we re-obtain the
Kleene-Dienes implicator, therefore our $\vartriangleright$ is an extension of
it. Clearly, if (C) holds then (D) and (ID) are also satisfied. 

\begin{proposition}
\label{prop:maxiff}
(i) $E\subseteq U$ is a maximal
$E(F)$ class if and only if $\theta(a,z)<F(a)=F(b)\leq\theta(a,b)$ ,
for all $a,b\in E$ and $z\notin E$;

\noindent(ii) $\mathcal{E}\subseteq U$ is a maximal $\varepsilon (G)$
class, if and only if $n(\theta(a,b))\leq G(a)=G(b)<n(\theta(a,z))$, for all
$a,b\in\mathcal{E}$ and $z\notin\mathcal{E}$.
\end{proposition}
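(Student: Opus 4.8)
The plan is to reduce both biconditionals to the pointwise membership criteria that are already available under condition (C), namely Proposition~\ref{prop:prop1}(v), which states $(a,b)\in R(F)\Leftrightarrow F(a)\leq\theta(a,b)$ and $(a,b)\in\varrho(G)\Leftrightarrow G(a)\geq n(\theta(a,b))$, together with its negated form, Corollary~\ref{cor:cor1}. These turn every statement about the quasiorders $R(F)$ and $\varrho(G)$ into an inequality between values of $F$ (resp.\ $G$) and $\theta$, so the whole proposition becomes a matter of translating ``maximal class'' into such inequalities.

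For the forward direction of (i), assume $E$ is a maximal $E(F)$ class. The within-class part $F(a)=F(b)\leq\theta(a,b)$ for $a,b\in E$ is exactly Lemma~\ref{lem:assertions}(i). For the outside part, fix $a\in E$ and $z\notin E$. Since $E$ is maximal, $(a,z)\in R(F)$ would force $E\leq_{R(F)}[z]_{E(F)}$ and hence $[z]_{E(F)}=E$, contradicting $z\notin E$; thus $(a,z)\notin R(F)$, and Corollary~\ref{cor:cor1} yields $\theta(a,z)<F(a)$. Combining, $\theta(a,z)<F(a)=F(b)\leq\theta(a,b)$, as required.

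For the converse of (i), suppose the displayed inequalities hold. First, $F(a)=F(b)\leq\theta(a,b)$ together with Proposition~\ref{prop:prop1}(v) and the symmetry of $\theta$ gives $(a,b),(b,a)\in R(F)$ for all $a,b\in E$, so $E$ lies inside a single $E(F)$ class. Next, for $a\in E$ and $z\notin E$ the strict inequality $\theta(a,z)<F(a)$ and Corollary~\ref{cor:cor1} give $(a,z)\notin R(F)$. This single fact does double duty: it shows that no element outside $E$ is $E(F)$-related to any $a\in E$ (so $E$ is exactly an $E(F)$ class, not merely contained in one), and it shows there is no class $E'\neq E$ with $E\leq_{R(F)}E'$ (so $E$ is maximal). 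Part (ii) is proved by the dual argument, replacing $R(F)$, $F$ and Lemma~\ref{lem:assertions}(i) by $\varrho(G)$, $G$ and Lemma~\ref{lem:assertions}(ii), and reading the second halves of Proposition~\ref{prop:prop1}(v) and Corollary~\ref{cor:cor1}, with $n(\theta(a,b))$ and $n(\theta(a,z))$ in place of $\theta(a,b)$ and $\theta(a,z)$.

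The argument is essentially bookkeeping once the membership criteria are invoked; the only point demanding care is that the single biconditional in the statement simultaneously encodes two distinct facts — that $E$ is a genuine equivalence class and that it is a \emph{maximal} one — so in the converse direction one must be sure to extract both from the strict inequality $\theta(a,z)<F(a)$. I expect this to be the main (mild) obstacle, since it is tempting to verify maximality while forgetting to confirm that $E$ is not properly contained in a larger class, or vice versa.
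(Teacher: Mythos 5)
Your proof is correct and follows essentially the same route as the paper: the forward direction combines Lemma \ref{lem:assertions} with the fact that maximality forces $(a,z)\notin R(F)$ (which the paper packages as Lemma \ref{lem:assertions}(iii), while you rederive it and pass through Corollary \ref{cor:cor1}), and the converse direction is the paper's argument verbatim---use Proposition \ref{prop:prop1}(v) to get $(a,b)\in R(F)$ inside $E$ and Corollary \ref{cor:cor1} to exclude all $z\notin E$, which simultaneously yields that $E$ is a full class and that it is maximal. Your closing caution about extracting both facts from the strict inequality is exactly the point the paper's converse handles in the same way.
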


\begin{proof} (i) If $E\subseteq U$ is a maximal $E(F)$ class,
then by Lemma \ref{lem:assertions}(i) and (iii), we have $F(a)=F(b)\leq
\theta(a,b)$ and $\theta(a,z)<F(a)$, for all $a,b\in E$ and $z\notin E$.

Conversely, let $E\subseteq U$ and assume that all the relations from (i) are
satisfied. Then, $F(a)=$ min$(\theta(a,b),F(b))$, for all $a,b\in E$, i.e. we
get $(a,b)\in R(F)$ for all $a,b\in E$, and in view of Corollary \ref{cor:cor1}, we have
$(a,z)\notin R(F)$ for all $z\notin E$. Hence $(a,b)\in R(F)\cap
R(F)^{-1}=E(F)$ holds for all $a,b\in E$, and $(a,z)\notin E(F)$ for each
$z\notin E$. This means that $E$ is an $E(F)$ class. We also get
$E\nleq\lbrack z]_{E(F)}$ for all $z\notin E$, because $(a,z)\notin R(F)$.
Thus $E$ is a maximal $E(F)$ class.

\noindent(ii) If $\mathcal{E}\subseteq U$ is a maximal $\varepsilon
(G)$ class, then in view of Lemma \ref{lem:assertions}(iv) $n(\theta(a,b))\leq G(a)=G(b)$, for
all $a,b\in\mathcal{E}$ and we have $\mathcal{E}\nleq\lbrack
z]_{\varepsilon (G)}$, for any $z\notin\mathcal{E}$. Then $(a,z)\notin\varrho(G)$
implies $G(a)<n(\theta(a,z))$, according to Lemma \ref{lem:assertions}(iv). The converse
implication is proved analogously as in (i). \end{proof}

\begin{lemma}
\label{lem:outer}
Let $F=\overline{\theta}(f)$, $G=\underline
{\theta}(g)$, for some $f,g\in\mathcal{F}(U)$, and let $E$ be an
$E(F)$\ class and $\mathcal{E}$ be an $\varepsilon (G)$ class
such that $E\cap\mathcal{E}\neq\emptyset$. Then the following assertions hold:

\noindent(i) If $(x,y)\notin R(F)$, for some $x\in E$ and $y\in\mathcal{E}$,
then $(x,z)\in R(F)$ implies $(y,z)\in\varrho(G)$, for all $z\notin
E\cup\mathcal{E}$.

\noindent(ii) If $(x,y)\notin\varrho(G)$, for some $x\in\mathcal{E}$ and $y\in
E$, then $(x,z)\in\varrho(G)$ implies $(y,z)\in R(F)$, for all $z\notin
E\cup\mathcal{E}$.
\end{lemma}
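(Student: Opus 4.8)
The plan is to prove (i) directly, and then obtain (ii) either by an analogous argument or by invoking the duality between $R$ and $\varrho$. Let me set up the situation for (i): we are given an element $a\in E\cap\mathcal{E}$, an $x\in E$ and $y\in\mathcal{E}$ with $(x,y)\notin R(F)$, and an arbitrary $z\notin E\cup\mathcal{E}$ for which $(x,z)\in R(F)$. I must deduce $(y,z)\in\varrho(G)$. The natural strategy under condition (C) is to argue entirely through inequalities among the values $\theta(x,y),\theta(x,z),\theta(y,z)$ and the approximation values, using Corollary \ref{cor:cor1} to translate membership (and non-membership) in $R(F)$ and $\varrho(G)$ into strict/non-strict inequalities between $F$ (resp.\ $G$) and $\theta$ (resp.\ $n(\theta)$). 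Concretely, $(x,z)\in R(F)$ gives $F(x)\le\theta(x,z)$, while $(x,y)\notin R(F)$ gives $F(x)>\theta(x,y)$, so $\theta(x,y)<\theta(x,z)$. Since $x\in E$ and $a\in E$, Lemma \ref{lem:assertions}(i) and the maximality-free facts give $F(x)=F(a)\le\theta(x,a)$, and since $y,a\in\mathcal{E}$ we have by Lemma \ref{lem:assertions}(ii) that $G(y)=G(a)\ge n(\theta(y,a))$.

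First I would extract the key chain of $\theta$-inequalities. From $(x,y)\notin R(F)$ and $(x,z)\in R(F)$ we get $\theta(x,y)<\theta(x,z)$. The crucial move is to locate $\theta(y,z)$ relative to these: using $\odot$-transitivity of the similarity relation $\theta$, namely $\min(\theta(x,y),\theta(y,z))\le\theta(x,z)$ and $\min(\theta(x,z),\theta(z,y))\le\theta(x,y)$ (symmetry makes $\theta(y,z)=\theta(z,y)$), I expect to pin down that $\theta(y,z)\le\theta(x,y)$, because if $\theta(y,z)$ were large it would, together with the large $\theta(x,z)$, force $\theta(x,y)$ up by transitivity, contradicting $\theta(x,y)<\theta(x,z)$. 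This gives me an upper bound on $\theta(y,z)$, hence a lower bound $n(\theta(y,z))\ge n(\theta(x,y))$ on the negator side, which is exactly the kind of inequality I need to certify $(y,z)\in\varrho(G)$ via Corollary \ref{cor:cor1}, i.e.\ $G(y)\ge n(\theta(y,z))$.

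Next I would convert the hypothesis $(x,y)\notin R(F)$, which lives on the $F$-side, into information on the $G$-side at $y$. This is where I must use that $x\in E$ and $y\in\mathcal{E}$ both meet the common element $a$. From $F(x)>\theta(x,y)$ together with $F(x)=F(a)\le\theta(a,x)$ and similarity, I aim to derive a relation between $\theta(x,y)$ and $\theta(a,y)=\theta(y,a)$, and then feed $G(y)=G(a)\ge n(\theta(y,a))$ into the estimate. The target is $G(y)\ge n(\theta(y,z))$: combining $G(y)\ge n(\theta(y,a))$ with the bound on $\theta(y,z)$ obtained above should yield $n(\theta(y,z))\ge n(\theta(y,a))$ or a directly comparable estimate, closing the loop. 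Applying Corollary \ref{cor:cor1} then certifies $(y,z)\in\varrho(G)$.

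The hard part, I expect, will be bookkeeping the triangle of transitivity inequalities correctly to land precisely on $G(y)\ge n(\theta(y,z))$ rather than on a weaker or wrongly-oriented bound; in particular, one must be careful because $a\in E\cap\mathcal{E}$ is used on both sides ($F(a)$ via $E$, $G(a)$ via $\mathcal{E}$) and the classes $E$, $\mathcal{E}$ need not be maximal here, so only Lemma \ref{lem:assertions}(i),(ii) and Proposition \ref{prop:prop1} (not the maximal-class refinements) are available. For part (ii), the cleanest route is to note that under (C) the negator $n$ is involutive, so Proposition \ref{prop:prop1}(iv) gives $\varrho(G)=R(n(G))$ and $R(F)=\varrho(n(F))$, and the roles of $E(F)$ and $\varepsilon(G)$ classes are interchanged by passing to $n(F)$, $n(G)$ (cf.\ Lemma \ref{lem:assertions}(v)); applying (i) to the dualized data then yields (ii). Alternatively, (ii) can be proved by repeating the symmetric version of the inequality chase above with the roles of $F$ and $G$, and of $E$ and $\mathcal{E}$, swapped.
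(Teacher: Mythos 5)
Your setup and your treatment of part (ii) are fine: translating (non-)membership via Corollary \ref{cor:cor1} and Proposition \ref{prop:prop1}(v), working through the pivot $a\in E\cap\mathcal{E}$, and deriving (ii) from (i) via $R(F)=\varrho(n(F))$, $\varrho(G)=R(n(G))$ (Proposition \ref{prop:prop1}(iv)) together with Lemma \ref{lem:assertions}(v) is exactly the paper's route for (ii). But the inequality chase you describe for part (i) is oriented backwards at its two decisive points, and as written it cannot close. To certify $(y,z)\in\varrho(G)$ you must prove $G(y)\geq n(\theta(y,z))$, which requires a \emph{lower} bound on $\theta(y,z)$ (so that $n(\theta(y,z))$ is small). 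Your ``crucial move'' produces the opposite: the bound $\theta(y,z)\leq\theta(x,y)$ is indeed true (by min-transitivity, since $\theta(x,y)<F(x)\leq\theta(x,z)$), but it yields $n(\theta(y,z))\geq n(\theta(x,y))$, which makes the target harder, not easier. Likewise your intended final estimate ``$n(\theta(y,z))\geq n(\theta(y,a))$'' combined with $G(y)\geq n(\theta(y,a))$ determines nothing about the relative order of $G(y)$ and $n(\theta(y,z))$; what is needed is the reverse, $n(\theta(y,z))\leq n(\theta(y,a))\leq G(y)$, i.e. $\theta(y,z)\geq\theta(y,a)$.

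The idea missing from your plan is to transfer the $R(F)$-hypotheses from $x$ to the pivot $a$ using that $R(F)$ is a crisp quasiorder (Proposition \ref{prop:prop1}(iii)): from $(a,x)\in R(F)$ and $(x,z)\in R(F)$ you get $(a,z)\in R(F)$, hence $F(a)\leq\theta(a,z)$, while $(x,y)\notin R(F)$ and $(x,a)\in R(F)$ force $(a,y)\notin R(F)$, hence $F(a)>\theta(a,y)$ by Corollary \ref{cor:cor1}. Thus $\theta(a,y)<F(a)\leq\theta(a,z)$, and min-transitivity through $a$ now gives the correct lower bound: $\theta(y,z)\geq\min\left(\theta(y,a),\theta(a,z)\right)=\theta(y,a)\geq n(G(a))=n(G(y))$, the last inequality by Lemma \ref{lem:assertions}(ii) and involutivity of $n$; this is exactly $G(y)\geq n(\theta(y,z))$, i.e. $(y,z)\in\varrho(G)$. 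The paper runs the same argument as a proof by contradiction: assuming $(y,z)\notin\varrho(G)$ forces $(a,z)\notin\varrho(G)$ (else transitivity of $\varrho(G)$ through $a$ would give $(y,z)\in\varrho(G)$), whence $G(a)<n(\theta(a,z))$ together with $G(a)\geq n(\theta(a,y))$ yields $\theta(a,z)<\theta(a,y)$, contradicting $F(a)\leq\theta(a,z)$ and $F(a)>\theta(a,y)$. Your $x$-level inequalities $\theta(x,y)<\theta(x,z)$ and $\theta(y,z)\leq\theta(x,y)$ are correct but lead nowhere; without the step $(a,z)\in R(F)$ the argument never reaches the required lower bound on $\theta(y,z)$, so the gap is genuine even though all your auxiliary facts are true.
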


\begin{proof} Let $a\in E\cap\mathcal{E}$. (i) Observe that the
relations $(x,y)\notin R(F)$ and $(x,a)\in R(F)$ exclude $(a,y)\in R(F)$. Thus
$(a,y)\notin R(F)$ yields $F(a)>\theta(a,y)$, by Corollary \ref{cor:cor1}.
Now, let $(x,z)\in R(F)$ and assume by contradiction $(y,z)\notin\varrho(G)$. Then
$a,y\in\mathcal{E}$ and $z\notin\mathcal{E}$ imply $\theta(a,z)<\theta(a,y)$.
On the other hand, $(a,x)\in R(F)$ and $(x,z)\in R(F)$ imply $(a,z)\in R(F)$.
Hence, Proposition \ref{prop:maxiff}(i) yields $F(a)\leq\theta(a,z)<\theta(a,y)$, a
contradiction to $F(a)>\theta(a,y)$. This proves $(y,z)\in\varrho(G)$.

\noindent(ii) By Proposition \ref{prop:prop1}(iv) we have $\varrho(G)=R(n(G))$,
$R(F)=\varrho(n(F))$, and by Lemma \ref{lem:assertions} (v), $\mathcal{E}$ is an $E%
(n(G))$ class, and $E$ is $\ $an $\varepsilon (n(F))$ class. Hence
$(x,y)\notin\varrho(G)$ for some $x\in\mathcal{E}$ and $y\in E$ and $(x,z)\in
\varrho(G)$ is equivalent to $(x,y)\notin R(n(G))$ and $(x,z)\in R(n(G))$,
therefore, $n(G)=n(\underline{\theta}(g))=\overline{\theta}(n(g))$ and
$n(F)=n(\overline{\theta}(f))=\underline{\theta}(n(f))$ form a pair that replaces
in the context of (ii) the pair $(F,G)$ from (i).
Thus $(y,z)\in\varrho(n(F))=R(F)$, in view of (i). \end{proof}

\begin{corollary}
\label{cor:outer}
Let $F=\overline{\theta}(f)$, $G=\underline
{\theta}(g)$, for some $f,g\in\mathcal{F}(U)$ and let $E$ be an $E%
(F)$\ class and $\mathcal{E}$ an $\varepsilon (G)$ class such that
$E\cap\mathcal{E}\neq\emptyset$.

\noindent(i) If $\mathcal{E}$ is a maximal $\varepsilon (G)$ class,
then $(x,y)\in R(F)$ for all $x\in E$ and $y\in\mathcal{E}$ or $E\subseteq
\mathcal{E}$ and there is no $t\in E$ and $z\notin\mathcal{E}$ with $(t,z)\in
R(F)$.

\noindent(ii) If $\mathcal{E}$ is a maximal $\varepsilon (G)$ class with $E\varsubsetneqq\mathcal{E}$ and there is no element $x\in E$
and $y\in\mathcal{E}\setminus E$ with $(x,y)\in R(F)$, then $E$ is a maximal
$E(F)$\ class.

\noindent(iii) If $E$ is a maximal $E(F)$ class, then $(x,y)\in\varrho(G)$ for
all $x\in\mathcal{E}$ and $y\in E$ or $\mathcal{E}\subseteq E$ and there is no
$t\in\mathcal{E}$ and $z\notin E$ with $(t,z)\in\varrho(G)$.

\noindent(iv) If  $E$ is a maximal $E(F)$ class such that $\mathcal{E}\varsubsetneqq E$ and there is no element
$x\in\mathcal{E}$ and $y\in E\setminus\mathcal{E}$ with $(x,y)\in\varrho(G)$,
then $\mathcal{E}$ is a maximal $\varepsilon (G)$\ class.
\end{corollary}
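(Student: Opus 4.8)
Corollary~\ref{cor:outer} asserts four statements about the interaction between an $E(F)$ class $E$ and an $\varepsilon(G)$ class $\mathcal{E}$ that overlap, under condition (C). The plan is to prove (i) directly and then obtain (iii) by the self-dual symmetry established in Proposition~\ref{prop:prop1}(iv), and to prove (ii) and (iv) likewise as a dual pair. Throughout I would fix some $a \in E \cap \mathcal{E}$, since that common element is what ties the two equivalence classes together and lets me apply the outer-class transfer results.

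For part (i), I would split into two cases according to whether $(x,y)\in R(F)$ holds for all $x\in E$, $y\in\mathcal{E}$. If it does, the first alternative holds and we are done. Otherwise there exist $x\in E$, $y\in\mathcal{E}$ with $(x,y)\notin R(F)$, and I must establish both conclusions of the second alternative, namely $E\subseteq\mathcal{E}$ and the nonexistence of $t\in E$, $z\notin\mathcal{E}$ with $(t,z)\in R(F)$. The containment $E\subseteq\mathcal{E}$ should come from Corollary~\ref{cor:assertions}(i) or (ii): since $\mathcal{E}$ is maximal and $E\cap\mathcal{E}\neq\emptyset$, and since $(x,y)\notin R(F)$ rules out $\mathcal{E}\subseteq E$ being the ``trivial'' direction, I expect to force $E\subseteq\mathcal{E}$. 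For the second conclusion I would argue by contradiction: suppose $(t,z)\in R(F)$ with $t\in E\subseteq\mathcal{E}$ and $z\notin\mathcal{E}$. Now $t\in E$, $y\in\mathcal{E}$, and $(x,y)\notin R(F)$ with $x,t$ in the same $E(F)$ class puts us exactly in the hypothesis of Lemma~\ref{lem:outer}(i): applying it to the pair $(t,y)$ (which fails $R(F)$ because $x,t$ are $E(F)$-equivalent) together with $(t,z)\in R(F)$ would force $(y,z)\in\varrho(G)$ for $z\notin E\cup\mathcal{E}$, contradicting that $\mathcal{E}$ is maximal and $z\notin\mathcal{E}$, hence $(y,z)\notin\varrho(G)$.

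For part (ii), I would again argue by contraposition or contradiction: assuming $E$ is not maximal, I would use Proposition~\ref{prop:maxiff}(i) or the definition of maximality to extract a witness $z\notin E$ with $(t,z)\in R(F)$ for some $t\in E$, and then show this conflicts with the hypotheses ``$E\subsetneq\mathcal{E}$ and no $x\in E$, $y\in\mathcal{E}\setminus E$ has $(x,y)\in R(F)$,'' invoking Lemma~\ref{lem:outer}(i) and Corollary~\ref{cor:assertions} to relocate the offending edge. The parts (iii) and (iv) I would deduce by applying (i) and (ii) to the dualized data: by Proposition~\ref{prop:prop1}(iv) we have $\varrho(G)=R(n(G))$ and $R(F)=\varrho(n(F))$, and by Lemma~\ref{lem:assertions}(v) maximal $E(F)$ classes coincide with maximal $\varepsilon(n(F))$ classes (and symmetrically for $G$); substituting $(n(F),n(G))$ for $(F,G)$ turns the statements of (i),(ii) into those of (iii),(iv).

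The main obstacle I anticipate is bookkeeping in the degenerate overlaps: when $E$ and $\mathcal{E}$ coincide, or when one is a singleton, several of the strict inequalities in Proposition~\ref{prop:maxiff} collapse, and I must check that Lemma~\ref{lem:outer} is still applicable (its hypothesis requires $z\notin E\cup\mathcal{E}$, so I must confirm the witness $z$ really lies outside both classes and not merely outside one of them). The second genuine difficulty is ensuring the dualization in (iii),(iv) is airtight: I need condition (C) to guarantee $n$ is involutive so that $n(n(F))=F$, and I must verify that ``$n(F)=\underline{\theta}(n(f))$'' and ``$n(G)=\overline{\theta}(n(g))$'' (property (D)) legitimately recast $n(F)$ as an upper approximation and $n(G)$ as a lower one, so that the roles of $R$ and $\varrho$ swap exactly as required by parts (i) and (ii).
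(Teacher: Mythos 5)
Your plan reproduces the paper's proof essentially step for step: the same case split for (i), using Corollary~\ref{cor:assertions}(ii) together with the maximality of $\mathcal{E}$ and the transitivity of $R(F)$ to handle the containment, and Lemma~\ref{lem:outer}(i) to rule out an edge $(t,z)\in R(F)$ with $z\notin\mathcal{E}$ (your transfer of the failing pair from $(x,y)$ to $(t,y)$ via $E(F)$-equivalence is an immaterial variant of the paper's transfer of the good edge from $(t,z)$ to $(x,z)$); part (ii) is the paper's Lemma~\ref{lem:outer}(i)-plus-maximality argument stated contrapositively instead of directly; and (iii),(iv) are obtained by exactly the dualization $R(F)=\varrho(n(F))$, $\varrho(G)=R(n(G))$ of Proposition~\ref{prop:prop1}(iv) with Lemma~\ref{lem:assertions}(v), which is what the paper's ``duals'' remark means. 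The only point to tighten is your justification of $E\subseteq\mathcal{E}$ in (i): Corollary~\ref{cor:assertions}(i) is unavailable since $E$ need not be maximal, so one argues as the paper does --- if some $a\in E\setminus\mathcal{E}$ existed, maximality of $\mathcal{E}$ would give $(y,a)\notin\varrho(G)$ for all $y\in\mathcal{E}$, hence $(a,y)\in R(F)$ by Corollary~\ref{cor:assertions}(ii), and transitivity of $R(F)$ would propagate this to all of $E$, contradicting your failing pair.
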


\begin{proof}(i) Let $\mathcal{E}$ be a maximal $\varepsilon (G)$
class and assume $E\nsubseteqq\mathcal{E}$. Then there exists $a\in
E\setminus\mathcal{E}$, and for all $y\in\mathcal{E}$, $(y,a)\notin\varrho(G)$ by
maximality of $\mathcal{E}$. Hence, in view of Corollary \ref{cor:assertions}(ii) we have
$(a,y)\in R(F)$, and because $(x,a)\in R(F)$ for each $x\in E$, we get
$(x,y)\in R(F)$ for all $x\in E$ and $y\in\mathcal{E}$. Consider now the case
when $E\subseteq\mathcal{E}$ and there are $x\in E$, $y\in\mathcal{E}$
with $(x,y)\notin R(F).$ Then $E\cup\mathcal{E=E}$. Assume that there exist
some elements $t\in E$ and $z\notin\mathcal{E}$ with $(t,z)\in R(F)$. Then
$(x,t)\in R(F)$ also yields $(x,z)\in R(F)$. Now, applying Lemma \ref{lem:outer}(i) we
obtain $(y,z)\in\varrho(G)$. Since $\mathcal{E}$ is a maximal
$\varepsilon (G)$ class and $z\notin\mathcal{E}$, this is not
possible, and this means that the second part of (i) holds.

\noindent(ii) Suppose that for all $x\in E$ and $y\in\mathcal{E\setminus}E$ we
have $(x,y)\notin R(F)$, and let $z\notin\mathcal{E}$. In view of Lemma
\ref{lem:outer}(i), $(x,z)\in R(F)$ for some $x\in E$ would imply $(y,z)\in\varrho(G)$, for
all $y\in\mathcal{E}$ - in contradiction to the fact that $\mathcal{E}$ is a
maximal $\varepsilon (G)$ class. Thus we deduce $(x,z)\notin R(F)$, for all
$x\in E$ and $z\notin E$. This means that $E$ is a maximal $E(F)$ class.

\noindent The proofs of (iii) and (iv) are duals of the proofs of (i) and (ii).
\end{proof}

\begin{proposition}
\label{prop:lowerupper}
Let $\theta$ be a similarity relation with
a finite range, and $F=\overline{\theta}(f)$, $G=\underline{\theta}(g)$, for
some $f,g\in\mathcal{F}(U).$

\noindent(i) If $\{a\}\subseteq U$ is a maximal $E(F)$\ class, then for any
$h\in\mathcal{F}(U)$ with $\overline{\theta}(h)(a)\geq F(a)$, we have
$\overline{\theta}(h)(a)=h(a)$.

\noindent(ii) If $\{b\}\subseteq U$ is a maximal $\varepsilon (G)$ class, then
for any $h\in\mathcal{F}(U)$ with $\underline{\theta}((h)(b)\leq G(b)$, we
have $\underline{\theta}(h)(b)=h(b)$.
\end{proposition}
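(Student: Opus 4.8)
The plan is to isolate, in the supremum defining $\overline{\theta}(h)(a)$, the single term coming from $y=a$, and to show that every other term is bounded strictly below $F(a)$, so that the hypothesis $\overline{\theta}(h)(a)\geq F(a)$ forces the supremum to be realized at $y=a$. First I would record the consequence of maximality: since $\{a\}$ is a maximal $E(F)$ class, Proposition \ref{prop:maxiff}(i) applied to the singleton $E=\{a\}$ gives $\theta(a,z)<F(a)$ for every $z\neq a$. Because $\theta$ has a finite range, the set $\{\theta(a,z)\mid z\in U,\ z\neq a\}$ is finite and each of its elements lies strictly below $F(a)$; hence its maximum $M$ satisfies $M<F(a)$ (the case $U=\{a\}$ being trivial).

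Next I would split the defining supremum. Under condition (C) we have $\overline{\theta}(h)(a)=\bigvee\{\min(\theta(a,y),h(y))\mid y\in U\}$, and separating the term $y=a$, for which $\theta(a,a)=1$ yields $\min(\theta(a,a),h(a))=h(a)$, I obtain
\[
\overline{\theta}(h)(a)=\max\Bigl(h(a),\ \bigvee\{\min(\theta(a,y),h(y))\mid y\neq a\}\Bigr).
\]
For each $y\neq a$ we have $\min(\theta(a,y),h(y))\leq\theta(a,y)\leq M<F(a)$, so the second argument of the maximum is at most $M<F(a)$. Combining this with the hypothesis $\overline{\theta}(h)(a)\geq F(a)$, the maximum cannot be attained by the second argument, and therefore $\overline{\theta}(h)(a)=h(a)$, which proves (i).

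Part (ii) is the order-dual statement, and I would prove it analogously. Maximality of the singleton $\{b\}$ together with Proposition \ref{prop:maxiff}(ii) gives $G(b)<n(\theta(b,z))$ for all $z\neq b$, i.e. $\theta(b,z)<n(G(b))$ by involutivity of $n$. Writing $\underline{\theta}(h)(b)=\bigwedge\{\max(n(\theta(b,y)),h(y))\mid y\in U\}$ and isolating $y=b$ (where $n(\theta(b,b))=n(1)=0$ leaves the term $h(b)$), the remaining terms satisfy $\max(n(\theta(b,y)),h(y))\geq n(\theta(b,y))\geq m>G(b)$, where $m=\min\{n(\theta(b,y))\mid y\neq b\}$ is strictly above $G(b)$, again by finiteness of the range of $\theta$. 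The hypothesis $\underline{\theta}(h)(b)\leq G(b)$ then forces the infimum to be attained at $y=b$, so $\underline{\theta}(h)(b)=h(b)$. Alternatively one may derive (ii) from (i) by passing to $n(h)$ and $n(G)=\overline{\theta}(n(g))$, using the duality $\varrho(G)=R(n(G))$ of Proposition \ref{prop:prop1}(iv) together with Lemma \ref{lem:assertions}(v).

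The only genuine obstacle is the strict separation $M<F(a)$ (respectively $m>G(b)$): each individual off-diagonal term is strictly below $F(a)$, but without a uniform gap their supremum could creep up to $F(a)$. This is exactly where the finite-range hypothesis on $\theta$ is indispensable. By contrast the arbitrary fuzzy set $h$ causes no difficulty, since it only ever lowers the off-diagonal terms in (i) (and raises them in (ii)) and hence never interferes with the bound.
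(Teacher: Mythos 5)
Your proof is correct and follows essentially the same route as the paper's: isolate the term $y=a$ (resp.\ $y=b$) in the defining supremum (infimum), use maximality of the singleton class to get $\theta(a,z)<F(a)$ (resp.\ $G(b)<n(\theta(b,z))$) for all $z$ outside it, and invoke the finite range of $\theta$ to turn these pointwise strict inequalities into a uniform strict gap. The only cosmetic difference is that you cite Proposition~\ref{prop:maxiff} where the paper uses Corollary~\ref{cor:cor1} (the two give the same inequality here), and you make explicit the gap element $M$ (resp.\ $m$) that the paper leaves implicit in the phrase ``because $\theta$ is of a finite range.''
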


\begin{proof} (i) If $\{a\}\subseteq U$ is a maximal $E%
(F)$ class, then $F(a)>\theta(a,y)$, for all $y\in U$, $y\neq a$, according to
Corollary \ref{cor:cor1}. Now, we can write:

$\overline{\theta}(h)(a)=\bigvee\{$min$(\theta(a,y),h(y))\mid y\in U\}=$

$h(a)\vee\left(  \bigvee\{\text{min}(\theta(a,y),h(y))\mid y\in U\setminus
\{a\}\}\right)  $,

\noindent and $\bigvee\{$min$(\theta(a,y),h(y))\mid y\in U\setminus\{a\}\}\leq
\bigvee\{\theta(a,y)\mid y\in U\setminus\{a\}\}<F(a)\leq\overline{\theta
}(h)(a)$, because $\theta$ is of a finite range. This implies $\overline
{\theta}(h)(a)=h(a)$.

\noindent(ii) If $\{b\}\subseteq U$ is a maximal $\varepsilon (G)$ class, then
$G(b)<n(\theta(b,y))$, for all $y\in U$, $y\neq b$, according to Corollary
\ref{cor:cor1}. We can write:

$\underline{\theta}((h)(b)=\bigwedge\{$max$(n(\theta(b,y)),h(y))\mid y\in U\}=$

$h(b)\wedge\left(  \bigwedge\{\text{max}(n(\theta(b,y)),h(y))\mid y\in
U\setminus\{b\}\}\right)  $,

\noindent and $\bigwedge\{$max$(n(\theta(b,y)),h(y))\mid y\in U\setminus\{b\}\}\geq
\bigwedge\{n(\theta(b,y))\mid y\in U\setminus\{b\}\}>G(b)\geq$ $\underline
{\theta}(h)(b)$, since $\theta$ is of a finite range. This yields
$\underline{\theta}(h)(b)=h(b)$.\end{proof}

\section{The lattice of fuzzy rough sets}
\label{sec:latticefrs}

Clearly, fuzzy rough sets corresponding to an approximation space $(U,\theta
)$, can be ordered as follows:%
\begin{equation}
\left(  \underline{\theta}(f),\overline{\theta}(f)\right)  \leq\left(
\underline{\theta}(g),\overline{\theta}(g)\right)  \Leftrightarrow
\underline{\theta}(f)\leq\underline{\theta}(g)\text{ and }\overline{\theta
}(f)\leq\overline{\theta}(g)\text{,}%
\tag{4}
\end{equation}

\noindent obtaining a poset $(\mathcal{FR}%
(U,\theta),\leq)$. If $\theta$ is reflexive, then $(\mathbf{0},\mathbf{0})$ and $(\mathbf{1},\mathbf{1})$ are its least and greatest elements.
If conditions in (D) hold, $n(\overline
{\theta}(f))=\underline{\theta}(n(f))$ and $n(\underline{\theta}%
(f))=\overline{\theta}(n(f))$ imply $(n(\overline{\theta}(f),n(\underline
{\theta}(f))\in\mathcal{FR}(U,\theta)$, for all $f\in\mathcal{F}(U)$. As $n$
is an involutive negator, $\Phi\colon\mathcal{FR}(U,\theta
)\rightarrow\mathcal{FR}(U,\theta)$, $\Phi(\left(  \underline{\theta
}(f),\overline{\theta}(f)\right)  )=(n(\overline{\theta}(f),n(\underline
{\theta}(f))$ is an involution, i.e. $\Phi(\Phi\left(  \underline{\theta
}(f),\overline{\theta}(f)\right)  )=\left(  \underline{\theta}(f),\overline
{\theta}(f)\right)  $. Since $\ \left(  \underline{\theta}(f),\overline
{\theta}(f)\right)  \leq\left(  \underline{\theta}(g),\overline{\theta
}(g)\right)  \Leftrightarrow(n(\overline{\theta}(g),n(\underline{\theta
}(g))\leq(n(\overline{\theta}(f),n(\underline{\theta}(f))$, we have
\[
\left(  \underline{\theta}(f),\overline{\theta}(f)\right)  \leq\left(
\underline{\theta}(g),\overline{\theta}(g)\right)  \Leftrightarrow\Phi\left(
\underline{\theta}(g),\overline{\theta}(g)\right)  \leq\Phi\left(
\underline{\theta}(f),\overline{\theta}(f)\right)  \text{,}%
\]
\noindent meaning that $\Phi$ is a dual order-isomorphism. Thus
$(\mathcal{FR}(U,\theta),\leq)$ is a self-dual poset, whenever conditions in (D) hold. 
In this section we will deduce some conditions under which $(\mathcal{FR}(U,\theta),\leq)$ is a lattice. 

\bigskip

Now let $L$ be a complete sublattice of $[0,1]$, and let $\mathcal{F}(U,L)$
stand for the family of all fuzzy sets $f\colon U\rightarrow L$.
The system of all $f\in\mathcal{F}(U,L)$ with a finite range is denoted
by $\mathcal{F}_{fr}(U,L)$. If $L=[0,1]$, then we write simply $\mathcal{F}%
_{fr}(U)$. As $0,1\in L$, we have $\mathbf{0,1}\in\mathcal{F}_{fr}(U,L)$. It
is obvious that for any $f_{1},f_{2}\in\mathcal{F}_{fr}(U,L)$, $f_{1}\vee
f_{2}=$ max$\left(  f_{1},f_{2}\right)  $ and $f_{1}\wedge f_{2}=$
min$\left(  f_{1},f_{2}\right)  $ are of a finite range and their values are in
$L$, hence $(\mathcal{F}_{fr}(U,L),\leq)$ is a bounded distributive lattice.
Clearly, for any $f\in\mathcal{F}(U,L)$ with a finite range and any negator $n$, the
fuzzy set $n(f)$ also has a finite range, i.e. $n(f) \in \mathcal{F}_{fr}(U,L)$. Further, if relation $\theta$ has a
finite range, then in view of Lemma \ref{lem:frange}, for any $f\in\mathcal{F}_{fr}(U)$:
$\underline{\theta}(f),\overline{\theta}(f)\in\mathcal{F}_{fr}(U)$. \ In all what follows, suppose that condition (C) holds with $n(L) \subseteq L $, and $\theta\colon U\times U\rightarrow L$ is a similarity
relation. Then

\begin{center}
$\overline{\theta}(f)(x)=\bigvee\{$min$(\theta(x,y),f(y))\mid y\in U\}$ and
$\underline{\theta}(f)(x)=\bigwedge\{$max$(n(\theta(x,y)),f(y))\mid y\in U\}$,
\end{center}

\noindent\ for all $x\in U$. As $L$ is closed w.r.t. arbitrary joins and
meets, and $n(L) \subseteq L $, we get that $ \underline{\theta}(f),\overline{\theta
}(f)  \in\mathcal{F}_{fr}(U,L)$. Now consider the poset  defined on 
\[
\mathcal{H}:=\{\left(  \underline{\theta}(f),\overline{\theta}(f)\right)  \mid
f\in\mathcal{F}_{fr}(U,L)\}\text{.}%
\]

\noindent We will prove that $(\mathcal{H},\leq)$ is
a lattice, moreover if $U$ or $L$ is finite, then it is a complete lattice. This approach
is motivated by the following examples:

1) If $U$ is a finite set, then $\theta$ and all $f\in\mathcal{F}(U)$
have finite ranges. Hence for $L=[0,1]$ we have $\mathcal{F}_{fr}%
(U,L)=\mathcal{F}(U)$, and $(\mathcal{H},\leq)$ equals
to $(\mathcal{FR}(U,\theta),\leq)$.

2) If $L$ is a finite chain with $0,1\in L$, then any $f\in\mathcal{F}(U,L)$
has a finite range, hence $\mathcal{F}_{fr}(U,L)=\mathcal{F}(U,L)$, and
$(\mathcal{H},\leq)$ is the same as $(\mathcal{F}\mathcal{R}(U,L),\leq)$.

\begin{remark}
\label{rem:complete}
(a) The relations $\underline{\theta}(f_{1}%
)\wedge\underline{\theta}(f_{1})=$ $\underline{\theta}\left(  f_{1}\wedge
f_{2}\right)  $ and $\overline{\theta}(f_{1})\vee\overline{\theta}%
(f_{2})=\overline{\theta}(f_{1}\vee f_{2})$ always hold (see e.g. \cite{DVCG}) for
any $f_{1},f_{2}\in\mathcal{F}(U)$. Assume now that condition (C) holds, or $\odot$ is a left continuous t-norm and $\vartriangleright$ is its $R$-implicator. It is known (see e.g. \cite{MH}) that in this case the equalities

$\bigwedge\{$ $\underline{\theta}(f_{i})\mid i\in I\}=\ \underline{\theta
}\left(  \bigwedge\{f_{i}\mid i\in I\}\right)  $, $\bigvee\{\overline{\theta
}(f_{i})\mid i\in I\}=\overline{\theta}\left(  \bigvee\{f_{i}\mid i\in
I\}\right)  $

\noindent also hold for any (nonempty) system $f_{i}\in\mathcal{F}(U)$, $i\in
I$.

\noindent (b) If now $L\subseteq\lbrack0,1]$ is a complete lattice and
$\theta\colon U\times U\rightarrow L$, then clearly, for any $f_{i}%
\in\mathcal{F}(U,L)$, $i\in I$ we get $\bigwedge\{f_{i}\mid i\in
I\},\bigvee\{f_{i}\mid i\in I\}\in\mathcal{F}(U,L)$ and $\bigwedge\{$ $\underline{\theta}(f_{i})\mid i\in I\}=\underline
{\theta}\left(  \bigwedge\{f_{i}\mid i\in I\}\right) \in\mathcal{F}(U,L) $, $\overline{\theta
}\left(  \bigvee\{f_{i}\mid i\in I\}\right)  \in\mathcal{F}(U,L)$. \\
\noindent (c) As in this case conditions from (ID) also
hold, in view of [4], for a $\odot$-similarity relation
$\theta$, $f\mapsto\underline{\theta}(f)$, $f\in\mathcal{F}(U,L)$ is an interior operator, and the map
$f\mapsto\overline{\theta}(f)$, $f\in\mathcal{F}(U,L)$ is a closure
operator. Hence $\left(  \text{Fix}_{L}\left(  \underline{\theta}\right)
,\leq\right)  $ and $\left(  \text{Fix}_{L}\left(  \overline{\theta}\right)
,\leq\right)  $ are complete lattices, where Fix$_{L}\left(  \underline{\theta
}\right)  :=\{f\in\mathcal{F}(U,L)\mid\underline{\theta}(f)=f\}$ and
Fix$_{L}\left(  \overline{\theta}\right)  :=\{f\in\mathcal{F}(U,L)\mid
\overline{\theta}(f)=f\}$.
\end{remark}

\begin{proposition}
\label{prop:completesubl}
Assume that conditions in (ID) are
satisfied, and let $L\subseteq\lbrack0,1]$ be a complete lattice and
$\theta\colon U\times U\rightarrow L$ be a $\odot$-similarity relation. Then
$\left(  \text{Fix}_{L}\left(  \overline{\theta}\right)  ,\leq\right)  $ and
$\left(  \text{Fix}_{L}\left(  \underline{\theta}\right)  ,\leq\right)  $ are
complete sublattices of $\mathcal{F}(U,L)$.
\end{proposition}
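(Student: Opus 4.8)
The plan is to show that both fixed-point sets are closed under the meets and joins \emph{computed in the ambient lattice} $\mathcal{F}(U,L)$, which is exactly what the phrase \emph{complete sublattice} demands. Note that Remark~\ref{rem:complete}(c) already guarantees that $\mathrm{Fix}_L(\overline{\theta})$ and $\mathrm{Fix}_L(\underline{\theta})$ are complete lattices in their own right (as fixed points of a closure, resp. interior, operator); the genuine content is therefore that their internal meet and join coincide with those of $\mathcal{F}(U,L)$. First I would fix an arbitrary family $\{f_i\mid i\in I\}$ of fixed points and invoke Remark~\ref{rem:complete}(b) to see that $\bigwedge_{i\in I} f_i$ and $\bigvee_{i\in I} f_i$ are again elements of $\mathcal{F}(U,L)$, so the candidate suprema and infima are legitimate; it then remains to check they are fixed.

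For $\mathrm{Fix}_L(\overline{\theta})$ the join is the easy direction: since each $f_i=\overline{\theta}(f_i)$, the join-preservation identity of Remark~\ref{rem:complete}(a) gives $\overline{\theta}\left(\bigvee_{i} f_i\right)=\bigvee_{i}\overline{\theta}(f_i)=\bigvee_{i} f_i$, so $\bigvee_i f_i$ is a fixed point. For the meet I would use only that $\overline{\theta}$ is a closure operator (Remark~\ref{rem:complete}(c)): from $\bigwedge_i f_i\le f_j$ and monotonicity, $\overline{\theta}\left(\bigwedge_i f_i\right)\le\overline{\theta}(f_j)=f_j$ for every $j$, hence $\overline{\theta}\left(\bigwedge_i f_i\right)\le\bigwedge_i f_i$; the reverse inequality is extensivity, so $\bigwedge_i f_i$ is fixed as well.

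The argument for $\mathrm{Fix}_L(\underline{\theta})$ is exactly dual: the meet is inherited from the meet-preservation identity $\underline{\theta}\left(\bigwedge_i f_i\right)=\bigwedge_i\underline{\theta}(f_i)=\bigwedge_i f_i$ of Remark~\ref{rem:complete}(a), while the join is inherited from the interior-operator property of $\underline{\theta}$: $f_j=\underline{\theta}(f_j)\le\underline{\theta}\left(\bigvee_i f_i\right)$ for all $j$ forces $\bigvee_i f_i\le\underline{\theta}\left(\bigvee_i f_i\right)$, and intensivity yields equality. Finally I would dispose of the empty family separately: since $\theta$ is reflexive one checks directly that $\overline{\theta}(\mathbf{1})=\mathbf{1}$, $\overline{\theta}(\mathbf{0})=\mathbf{0}$ and likewise for $\underline{\theta}$, so the top and bottom of $\mathcal{F}(U,L)$ lie in both fixed-point sets.

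The step I expect to be the real content is the \emph{join} for $\overline{\theta}$ (and dually the \emph{meet} for $\underline{\theta}$). For a generic closure operator the join of closed elements is merely the \emph{closure} of their ambient join, so $\mathrm{Fix}$ is in general a complete lattice but \emph{not} a complete sublattice; what rescues us is precisely the distributivity of $\overline{\theta}$ over arbitrary joins (and of $\underline{\theta}$ over arbitrary meets) provided by Remark~\ref{rem:complete}(a), which rests on the present hypotheses. Thus the whole proposition reduces to pairing one half of Remark~\ref{rem:complete}(a) with the routine closure/interior calculus of Remark~\ref{rem:complete}(c) in each of the two cases.
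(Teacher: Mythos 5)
Your proposal is correct and follows essentially the same route as the paper's proof: joins in $\mathrm{Fix}_L(\overline{\theta})$ via the join-preservation identity of Remark~\ref{rem:complete}(a), meets via the closure-operator property (Remark~\ref{rem:complete}(c)), and the dual argument for $\mathrm{Fix}_L(\underline{\theta})$. The only differences are cosmetic: you spell out the standard monotonicity-plus-extensivity argument that the paper merely cites as the closed-set property, and you treat the empty family explicitly, which the paper leaves implicit.
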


\begin{proof} Let $f_{i}\in\ $Fix$_{L}\left(  \overline{\theta
}\right)  $, $i\in I$ arbitrary. Then, in view of Remark \ref{rem:complete}, $\bigvee
\{f_{i}\mid i\in I\}\in\mathcal{F}(U,L)$, and $\overline{\theta}\left(
\bigvee\{f_{i}\mid i\in I\}\right)  =\bigvee\{\overline{\theta}(f_{i})\mid
i\in I\}=\bigvee\{f_{i}\mid i\in I\}$. Hence $\bigvee\{f_{i}\mid i\in I\}\in
$\ Fix$_{L}\left(  \overline{\theta}\right)  $. As Fix$_{L}\left(  \overline{\theta}\right) $ is the system of closed sets of the operator $ f\mapsto\overline{\theta
}(f)$ and $f_{i} \in\ $Fix$_{L}\left(  \overline{\theta
}\right) $, $i\in I$,
we also have $\bigwedge\limits_{i\in I}f_{i}\in\ $%
Fix$_{L}\left(  \overline{\theta}\right)  $. Hence $\left(  \text{Fix}%
_{L}\left(  \overline{\theta}\right)  ,\leq\right)  $ is a complete sublattice
of $\left(  \mathcal{F}(U,L),\leq\right)  $. The claim that $\left(
\text{Fix}_{L}\left(  \underline{\theta}\right)  ,\leq\right)  $ is complete sublattice
of $\left(  \mathcal{F}(U,L),\leq\right)  $ is proved dually.\end{proof}

\begin{corollary}
\label{cor:fafia}
Let $\theta\colon U\times U\rightarrow L$ be
a similarity relation with a finite range on $U$, $f_{i}\in\mathcal{F}(U,L)$,
$i\in I$, $F=%
{\textstyle\bigwedge}
\{\overline{\theta}(f_{i})\mid i\in I\}$ and let $\{a\}\subseteq U$ be a
maximal $E(F)$ class. Then $F(a)=\bigwedge\{f_{i}(a)\mid i\in I\}$.
\end{corollary}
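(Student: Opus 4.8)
The plan is to reduce the statement to Proposition \ref{prop:lowerupper}(i), which already treats a single upper approximation. Before that can be invoked, the first thing I would do is check that $F=\bigwedge\{\overline{\theta}(f_i)\mid i\in I\}$ is itself a fixed point of $\overline{\theta}$, so that the notion of an $E(F)$ class is legitimate and the results of Sections \ref{sec:quasiorders}--\ref{sec:propofeqc} apply to $F$. Under condition (C) the map $f\mapsto\overline{\theta}(f)$ is a closure operator on $\mathcal{F}(U,L)$: it is monotone, extensive since $\theta$ is reflexive, and idempotent by (ID). A pointwise meet of $\overline{\theta}$-closed fuzzy sets is again closed, because $F\leq\overline{\theta}(f_i)$ gives $\overline{\theta}(F)\leq\overline{\theta}(\overline{\theta}(f_i))=\overline{\theta}(f_i)$ for every $i$, whence $\overline{\theta}(F)\leq\bigwedge_{i}\overline{\theta}(f_i)=F$, while $F\leq\overline{\theta}(F)$ by extensivity. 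Thus $F=\overline{\theta}(F)$, so with the choice $f:=F$ we have $F=\overline{\theta}(f)$ and all the machinery developed for upper approximations is available for $F$; in particular ``maximal $E(F)$ class'' is meaningful.

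Next, with $F$ known to be $\overline{\theta}$-closed, I would apply Proposition \ref{prop:lowerupper}(i) once for each index. For every $i\in I$ the fuzzy set $h:=f_i$ satisfies $\overline{\theta}(h)(a)=\overline{\theta}(f_i)(a)\geq F(a)$, simply because $F$ is the pointwise meet of the $\overline{\theta}(f_j)$ and hence $F\leq\overline{\theta}(f_i)$. Since $\{a\}$ is by hypothesis a maximal $E(F)$ class, Proposition \ref{prop:lowerupper}(i) then yields $\overline{\theta}(f_i)(a)=f_i(a)$ for every $i$. Taking the meet over $i$ gives $F(a)=\bigwedge_{i}\overline{\theta}(f_i)(a)=\bigwedge_{i}f_i(a)$, which is exactly the claim.

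I expect the only genuinely non-routine point to be the opening step: realising that the pointwise meet of a family of upper approximations is again closed, so that the singleton $\{a\}$ can legitimately be a maximal $E(F)$ class and Proposition \ref{prop:lowerupper}(i) applies verbatim; after that the argument is immediate. As an alternative to citing Proposition \ref{prop:lowerupper}(i), one could argue the key equality directly: by Proposition \ref{prop:maxiff}(i) a maximal singleton class gives $\theta(a,z)<F(a)$ for all $z\neq a$, and since $\theta$ has finite range $\max_{z\neq a}\theta(a,z)<F(a)$; splitting $\overline{\theta}(f_i)(a)=f_i(a)\vee\bigvee_{y\neq a}\min(\theta(a,y),f_i(y))$ and noting that the second term is at most $\max_{z\neq a}\theta(a,z)<F(a)\leq\overline{\theta}(f_i)(a)$ forces $\overline{\theta}(f_i)(a)=f_i(a)$, which then gives the result after meeting over $i$.
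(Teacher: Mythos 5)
Your proposal is correct and follows essentially the same route as the paper: establish $F=\overline{\theta}(F)$ so that Proposition \ref{prop:lowerupper}(i) applies (the paper cites Proposition \ref{prop:completesubl} for this, whereas you reprove the closure-operator fact inline), then use $\overline{\theta}(f_i)(a)\geq F(a)$ to get $\overline{\theta}(f_i)(a)=f_i(a)$ for each $i$ and take the meet. Your sketched alternative at the end merely unfolds the paper's own proof of Proposition \ref{prop:lowerupper}(i), so it is the same argument one level deeper.
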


\begin{proof} In view of Proposition \ref{prop:completesubl} we have $F=%
{\textstyle\bigwedge}
\{\overline{\theta}(f_{i})\mid i\in I\}\in$\ Fix$_{L}(\overline{\theta})$,
i.e. $F=\overline{\theta}(F)$. Since $\overline{\theta}(f_{i})(a)\geq F(a)$,
$i\in I$, by using Proposition \ref{prop:lowerupper}(i) we obtain $\overline{\theta}%
(f_{i})(a)=f_{i}(a)$, for all $i\in I$. This yields $F(a)=\bigwedge
\{f_{i}(a)\mid i\in I\}$.  \end{proof}

\begin{theorem}
\label{thm:lattice}
Let $\theta\colon U\times U\rightarrow L $ be a similarity relation
of a finite range, and assume that condition (C) holds with a negator satisfying $ n(L)\subseteq L$. 

\noindent(i) If the fuzzy sets $\bigwedge\limits_{i\in I}f_{i}$,
$\underset{i\in I}{\bigwedge}\overline{\theta}(f_{i})$, $f_{i}\in\mathcal{F}(U,L)$, $i\in
I$ have finite ranges, then the infimum of fuzzy rough sets $\left(
\underline{\theta}(f_{i}),\overline{\theta}(f_{i})\right)  $, $i\in I$ exists
in $(\mathcal{FR}(U,L),\leq)$ and its components have finite ranges.

\noindent(ii)  $(\mathcal{H},\leq)=(\{\left(  \underline{\theta}(f),\overline{\theta}(f)\right)
\mid f\in\mathcal{F}_{fr}(U,L)\},\leq)$ is a lattice.

\noindent(iii) If $U$ or $L$ is finite, then $(\mathcal{FR}(U,L),\leq)$ is a
complete lattice.
\end{theorem}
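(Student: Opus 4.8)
The plan is to establish the three parts of Theorem~\ref{thm:lattice} by first proving that infima exist (part (i)), then dualizing via the involution $\Phi$ to get suprema, and finally combining these to conclude the lattice (part (ii)) and complete-lattice (part (iii)) structure. The natural candidate for the infimum of the fuzzy rough sets $\left(\underline{\theta}(f_i),\overline{\theta}(f_i)\right)$, $i\in I$, is the pair whose second component is $F:=\bigwedge_{i\in I}\overline{\theta}(f_i)$ and whose first component is $\bigwedge_{i\in I}\underline{\theta}(f_i)=\underline{\theta}\!\left(\bigwedge_{i\in I}f_i\right)$ (using Remark~\ref{rem:complete}). The first component is automatically a lower approximation, but the crux is that the candidate infimum pair must itself be a fuzzy rough set, i.e.\ there must exist a single fuzzy set $f$ (with finite range) so that $F=\overline{\theta}(f)$ and $\bigwedge_i\underline{\theta}(f_i)=\underline{\theta}(f)$.

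For part (i), I would set $G:=\bigwedge_{i\in I}\underline{\theta}(f_i)=\underline{\theta}(\bigwedge_i f_i)$ and $F:=\bigwedge_{i\in I}\overline{\theta}(f_i)$. By Remark~\ref{rem:complete} and Proposition~\ref{prop:completesubl}, $F\in\mathrm{Fix}_L(\overline{\theta})$ and $G\in\mathrm{Fix}_L(\underline{\theta})$, and both have finite ranges by hypothesis; also $G\le F$ since each $\underline{\theta}(f_i)\le\overline{\theta}(f_i)$. The plan is then to verify that $(F,G)$ satisfies conditions (1)--(3) of Theorem~\ref{thm:frs}, so that $(F,G)$ is a genuine fuzzy rough set induced by some finite-range $f$ with $F=\overline{\theta}(f)$, $G=\underline{\theta}(f)$. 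Condition (1) is the content of the previous sentence. For conditions (2) and (3), I would take a maximal $\varepsilon(G)$ class $\mathcal{E}$ all of whose singletons $\{a\}$ are maximal $E(F)$ classes (resp.\ dually), and produce an element $u\in\mathcal{E}$ with $F(u)=G(u)$. Here Corollary~\ref{cor:fafia} gives $F(u)=\bigwedge_i f_i(u)$ at such singleton-maximal points, while the dual statement (applied to the join via $\Phi$) gives $G(u)=\bigvee_i\underline{\theta}(f_i)(u)$-type control, and reconciling these at a common class is where the singleton-maximality hypotheses of (2)--(3) become exactly what is needed.

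Once $(F,G)\in\mathcal{FR}(U,L)$ is established, it is a lower bound of all $\left(\underline{\theta}(f_i),\overline{\theta}(f_i)\right)$ by construction, and it is the \emph{greatest} lower bound because any lower bound $(\underline{\theta}(g),\overline{\theta}(g))$ must satisfy $\overline{\theta}(g)\le\overline{\theta}(f_i)$ for all $i$, hence $\overline{\theta}(g)\le F$, and similarly on the lower component. This gives part (i). For part (ii), the existence of binary (hence finite) infima follows from (i), and the existence of suprema follows by applying the dual order-isomorphism $\Phi$ established in the text: $\Phi$ turns infima in $\mathcal{H}$ into suprema and vice versa, and $\Phi$ preserves the finite-range class since $n(L)\subseteq L$ guarantees $n(f)\in\mathcal{F}_{fr}(U,L)$. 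Thus $(\mathcal{H},\le)$ has all binary meets and joins and is a lattice. For part (iii), when $U$ or $L$ is finite, \emph{every} $f\in\mathcal{F}(U,L)$ has a finite range, so the finite-range hypotheses in (i) are automatic for arbitrary index sets $I$; therefore arbitrary infima exist by (i), and arbitrary suprema exist by the $\Phi$-duality, yielding a complete lattice (with least element $(\mathbf{0},\mathbf{0})$ and greatest $(\mathbf{1},\mathbf{1})$).

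The main obstacle I anticipate is verifying conditions (2) and (3) of Theorem~\ref{thm:frs} for the candidate pair $(F,G)$, since $F=\bigwedge_i\overline{\theta}(f_i)$ is only a \emph{meet} of closure-operator fixpoints and its maximal $E(F)$ classes need not relate transparently to the individual classes of the $\overline{\theta}(f_i)$. The delicate point is to show that at a maximal $\varepsilon(G)$ class whose singletons are all maximal $E(F)$ classes, the values $F$ and $G$ actually coincide at some point; this is precisely where Corollary~\ref{cor:fafia} (computing $F$ at singleton-maximal points as a pointwise meet) and its dual must be combined, and the structural Corollaries~\ref{cor:assertions} and~\ref{cor:outer} relating the two quasiorders on overlapping classes will do the heavy lifting.
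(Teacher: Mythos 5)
Your reduction of part (i) to Theorem~\ref{thm:frs} fails, and it fails exactly at the spot you flagged as the ``main obstacle'': the candidate pair is wrong. You propose to show that the componentwise meet $(G,F)$ with $G=\bigwedge_{i}\underline{\theta}(f_i)$ and $F=\bigwedge_{i}\overline{\theta}(f_i)$ satisfies conditions (1)--(3) of Theorem~\ref{thm:frs}, hence is itself a fuzzy rough set and is the infimum. But this pair is in general \emph{not} a fuzzy rough set: the paper's own Example~\ref{ex:notfrs} exhibits $\alpha_1,\alpha_2$ over $U=\{a,b,c\}$ whose componentwise meet $\left(\begin{smallmatrix}0.75 & 0.75 & 0.5\\ 0.1 & 0.1 & 0.5\end{smallmatrix}\right)$ (upper row = upper approximation) is not a fuzzy rough set, while the true infimum is $\left(\begin{smallmatrix}0.25 & 0.25 & 0.5\\ 0.1 & 0.1 & 0.5\end{smallmatrix}\right)$, whose upper component is \emph{strictly below} $F$. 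Since $(G,F)$ does satisfy condition (1) in that example (both components are fixpoints by Remark~\ref{rem:complete} and Proposition~\ref{prop:completesubl}, $G\le F$, finite ranges), Theorem~\ref{thm:frs} forces conditions (2)--(3) to \emph{fail} for it there; no combination of Corollary~\ref{cor:fafia} with Corollaries~\ref{cor:assertions} and~\ref{cor:outer} can verify statements that are false. (Your appeal to a ``dual of Corollary~\ref{cor:fafia}'' giving $G(u)=\bigvee_i\underline{\theta}(f_i)(u)$ is also off target, since $G$ is a meet, not a join.) Relatedly, your greatest-lower-bound argument only shows that every lower bound of the family lies below $(G,F)$; since $(G,F)\notin\mathcal{FR}(U,L)$ in general, this does not identify the infimum. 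Remark~\ref{rem:infsup} applies only when the componentwise meet happens to land in $\mathcal{FR}(U,L)$.

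What the paper actually does is keep $G$ as the lower component (legitimate, since $G=\underline{\theta}(\bigwedge_i f_i)$ by Remark~\ref{rem:complete}) but let the upper component drop below $F$. It selects from each maximal $\varepsilon(G)$ class $\mathcal{E}_t$ a single element $b_t$, with a three-tier preference (a point where $G=F$; failing that, a point whose singleton is not an $E(F)$ class; failing that, one whose singleton is not a \emph{maximal} $E(F)$ class), the feasibility of this selection being where Corollary~\ref{cor:fafia} combines with Proposition~\ref{prop:famaxfy}(ii). It then defines $f$ by $f=G$ on $\{b_t\mid t\in T\}$ and $f=F$ elsewhere, gets $\underline{\theta}(f)=G$ from Proposition~\ref{prop:hufuhvgv}(ii), and --- this is the heart of the proof, entirely absent from your proposal --- shows minimality pointwise: every $h$ with $\underline{\theta}(h)\le G$ and $\overline{\theta}(h)\le F$ satisfies $h\le\overline{\theta}(f)$, by a case analysis at the selected points $b_{t_0}$ using Proposition~\ref{prop:lowerupper}(ii), Proposition~\ref{prop:prop2}(i), and Corollaries~\ref{cor:assertions}(ii) and~\ref{cor:outer}(ii), after which idempotence gives $\overline{\theta}(h)\le\overline{\theta}(f)$. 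Your handling of parts (ii) and (iii) --- binary meets from (i), joins via the involution $\Phi$ using $n(L)\subseteq L$, and automatic finite ranges when $U$ or $L$ is finite --- does match the paper and would stand once (i) is repaired.
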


\begin{proof}(i) Denote $G=\underline{\theta}(\bigwedge
\limits_{i\in I}f_{i})$ and $F=\underset{i\in I}{\bigwedge}\overline{\theta
}(f_{i})$. Then $G,F \in \mathcal{F}(U,L)$, by Remark \ref{rem:complete}(b), and we have $\underline{\theta}(G)=G$ and $\overline{\theta}(\overline{\theta
}(f_{i}))=\overline{\theta}(f_{i})$, $ i\in I $, according to Remark \ref{rem:complete}(c). Thus $G\in$\ Fix$_{L}(\underline{\theta
})$. Since 
$\theta$ and $\bigwedge\limits_{i\in I}f_{i}$ have finite ranges, $G$ also has
a finite range. As $\overline{\theta}(f_{i}) \in$  Fix$_{L}(\overline{\theta
})$, Proposition \ref{prop:completesubl} gives $F\in$\ Fix$_{L}(\overline{\theta
})$, and by assumption $F$ has a finite range. Clearly, $G=\underline{\theta
}(\bigwedge\limits_{i\in I}f_{i})\leq\overline{\theta}(f_{i})$, for all $i\in
I$, whence $G\leq F$. 
Using $G$ and $F$ we will construct a fuzzy set $f\in\mathcal{F}(U,L)$ such
that $\left(  \underline{\theta}(f),\overline{\theta}(f)\right)  $ equals to
inf$\{(\underline{\theta}(f_{i}),\overline{\theta}(f_{i}))\mid i\in I\}$.

\medskip
First, from each maximal $\varepsilon (G)$ class $\mathcal{E}_{t}$,
$t\in T$ we select exactly one element $b_{t}\in\mathcal{E}_{t}$ as follows:

\noindent 1) If $\mathcal{E}_{t}$ contains an element $q_{t}\in\mathcal{E}_{t}$ with
$G(q_{t})=F(q_{t})$, then we set $b_{t}:=q_{t}$.

\noindent 2) If there are no such elements in $\mathcal{E}_{t}$, however there exists
an $s_{t}\in\mathcal{E}_{t}$ such that $\{s_{t}\}$ is not an $E(F)$ class,
then we choose it and set $b_{t}:=s_{t}$.

\noindent 3) If there are no elements of type 1) or 2) in in $\mathcal{E}_{t}$, then
we select an element $r_{t}\in\mathcal{E}_{t}$ such that $\{r_{t}\}$ is not a
maximal $E(F)$ class, and we set $b_{t}:=r_{t}$.

\medskip

Now we show that we can always manage such a selection. Indeed, assume by
contradiction that in some class $\mathcal{E}_{z}$ there are no elements of
type 1), 2) and 3). This means that for each $x\in\mathcal{E}_{z}$ the set
$\{x\}$ is a maximal $E(F)$ class. Then in view of Corollary \ref{cor:fafia}, we
have $F(x)=\bigwedge\limits_{i\in I}f_{i}(x)$, for each $x\in\mathcal{E}_{z}$.
As $\bigwedge\limits_{i\in I}f_{i}(x)$ has a finite range, by Proposition \ref{prop:famaxfy}(ii) we get $G(y)=\ $min$\{\bigwedge\limits_{i\in I}%
f_{i}(x)\mid x\in\mathcal{E}_{z}\}$, for all $y\in\mathcal{E}_{z}$, because
$G=\underline{\theta}(\bigwedge\limits_{i\in I}f_{i})$. Hence, there exists an element $v\in\mathcal{E}_{z}$ such that
$G(v)=\bigwedge\limits_{i\in I}f_{i}(v)=F(v)$. Since this result means that
$v$ is an element of type 1) in $\mathcal{E}_{z}$, this is a contradiction.

As next step, we construct a fuzzy set $f\in\mathcal{F}(U,L)$ as
follows:

\begin{equation}
f(x)=\left\{
\begin{array}
[c]{l}%
G(x)\text{, if }x\in\{b_{t}\mid t\in T\}\text{;}\\
F(x)\text{, if }x\in U\setminus\{b_{t}\mid t\in T\}
\end{array}
\right.
\tag{5}
\end{equation}

As $G,F\in\mathcal{F}(U,L)$, we have $f\in\mathcal{F}(U,L)$. Since $F$ and $G$
have finite ranges, $f$ also has a finite range. As from each maximal
$\varepsilon (G)$ class $\mathcal{E}_{t}$, $t\in T$ an element
$b_{t}\in\mathcal{E}_{t}$ was selected and $f(b_{t})=G(b_{t})$, $f\geq G$
hold, by Proposition \ref{prop:hufuhvgv}(ii) we have $\underline{\theta}(f)=G=\underline
{\theta}(\bigwedge\limits_{i\in I}f_{i})$. We prove that $\left(
\underline{\theta}(f),\overline{\theta}(f)\right)  $ is the infimum of the
system $(\underline{\theta}(f_{i}),\overline{\theta}(f_{i})),i\in
I$. Thus we are going to show that $\left(  \underline{\theta}(f),\overline
{\theta}(f)\right)  $ is a lower bound of $(\underline{\theta}(f_{i}%
),\overline{\theta}(f_{i})),i\in I$ and for any $h\in\mathcal{F}(U,L)$ with
$(\underline{\theta}(h),\overline{\theta}(h))\leq(\underline{\theta}%
(f_{i}),\overline{\theta}(f_{i}))$, $i\in I$ we have $(\underline{\theta
}(h),\overline{\theta}(h))\leq\left(  \underline{\theta}(f),\overline{\theta
}(f)\right)  $. As by definition $f\leq F$, we also have $\overline{\theta
}(f)\leq\overline{\theta}(F)=F \leq\overline{\theta}(f_{i})$, $i\in I$. 
Since $\underline{\theta}(f)=\underline{\theta}(\bigwedge\limits_{i\in
I}f_{i})\leq\underline{\theta}(f_{i})$, $i\in I$, now $\left(  \underline
{\theta}(f),\overline{\theta}(f)\right)  $ is a lower bound of $(\underline
{\theta}(f_{i}),\overline{\theta}(f_{i})),i\in I$ and condition $\overline
{\theta}(h)\leq\overline{\theta}(f_{i})$, $i\in I$ is equivalent to
$\overline{\theta}(h)\leq\ \underset{i\in I}{%
{\textstyle\bigwedge}
}\overline{\theta}(f_{i})=F$. Since $\underline{\theta}(f)=\underline{\theta
}(\bigwedge\limits_{i\in I}f_{i})=\bigwedge\limits_{i\in I}\underline{\theta
}(f_{i})$, we also have

$\underline{\theta}(h)\leq\underline{\theta}(f_{i})$,
$i\in I \Longleftrightarrow  \underline{\theta}(h)\leq\bigwedge
\limits_{i\in I}\underline{\theta}(f_{i})=\underline{\theta}(f)=G$. 

\noindent Hence to
prove $(\underline{\theta}(h),\overline{\theta}(h))\leq\left(  \underline
{\theta}(f),\overline{\theta}(f)\right)  $, for all sets $h\in\mathcal{F}(U,L)$ with \\
$(\underline{\theta}(h),\overline{\theta}(h))\leq(\underline{\theta}%
(f_{i}),\overline{\theta}(f_{i}))$, $i\in I$, it is enough to show that
$\overline{\theta}(h)\leq$ $\overline{\theta}(f)$ holds for any $h\in
\mathcal{F}(U,L)$ with $\underline{\theta}(h)\leq G$ and $\overline{\theta
}(h)\leq F$.\medskip

Take any $h$ with this property and any $x\in U$. If $x\in U\setminus
\{b_{t}\mid t\in T\}$ or $x=b_{t_{0}}$ for some $t_{0}\in$ $T$ with
$G(b_{t_{0}})=F(b_{t_{0}})$, then $f(x)=F(x)$, hence $h(x)\leq\overline
{\theta}(h)(x)\leq F(x)=f(x)\leq$ $\overline{\theta}(f)(x)$.

Let $x=b_{t_{0}}$, for some $t_{0}\in T$ such that $G(b_{t_{0}})\neq
F(b_{t_{0}})$. Then $f(b_{t_{0}})=G(b_{t_{0}})$, by our construction. If
$\{b_{t_{0}}\}$ is a maximal $\varepsilon (G)$ class, then in view of
Proposition \ref{prop:lowerupper}(ii), $\underline{\theta}(h)(b_{t_{0}})\leq G(b_{t_{0}})$
implies $h(b_{t_{0}})=\underline{\theta}(h)(b_{t_{0}})\leq G(b_{t_{0}})$, i.e.
we obtain $h(x)\leq G(x)=f(x)\leq\overline{\theta}(f)(x)$.

Assume now that $\mathcal{E}_{t_{0}}$, the maximal $\varepsilon (G)$
class containing $b_{t_{0}}$, has at least two elements. Denote the $E(F)$ class containing $b_{t_{0}}$ by
$E_{0}$. 

If $E_{0}\nsubseteqq\mathcal{E}_{t_{0}}$, then there exists a $z_0\in
E_{0}\setminus\mathcal{E}_{t_{0}}$, and we have $(y,z_0)\notin\varrho(G)$ for each
$y\in\mathcal{E}_{t_{0}}$, because $\mathcal{E}_{t_{0}}$ is a maximal
$\varepsilon (G)$ class. Hence, by Corollary \ref{cor:assertions}(ii), we get $(z,y)\in R(F)$ for
all $z\in E_{0}$ and $y\in\mathcal{E}_{t_{0}}$. Thus $(b_{t_{0}},c)\in R(F)$ for any $c\in\mathcal{E}_{t_{0}}$,
$c\neq b_{t_{0}}$. Clearly, $c\notin\{b_{t}\mid t\in T\}$, because only a single
element $b_{t_{0}}$ was selected from $\mathcal{E}_{t_{0}}$, and hence
$f(c)=F(c)$. Since $\left(  b_{t_{0}},c\right)  \in R(F)$ and $f\leq F$, by
applying Proposition \ref{prop:prop2}(i) we get $\overline{\theta}(f)(b_{t_{0}})=F(b_{t_{0}})$.
Thus we obtain $h(b_{t_{0}})\leq\overline{\theta}(h)(b_{t_{0}})\leq
F(b_{t_{0}})=\overline{\theta}(f)(b_{t_{0}})$, i.e. $h(x)\leq\overline{\theta
}(f)(x)$.

If $E_{0}\subseteq\mathcal{E}_{t_{0}}$, then we claim that $(b_{t_{0}},e)\in
R(F)$ for some element $e\in\mathcal{E}_{t_{0}}\setminus\{b_{t_{0}}\}$ (such
an element exists, because $|\mathcal{E}_{t_{0}}|\geq2$).
Clearly, if $E_{0}$ has at least two elements, then $e$ can be chosen as any
element from $E_{0}\setminus\{b_{t_{0}}\}$. If $E_{0}=\{b_{t_{0}}\}$, then in
view of our construction, the element $b_{t_{0}}$ is of type 3), i.e.
$\{b_{t_{0}}\}$ is an $E(F)$ class which is not maximal. However, if
$(b_{t_{0}},e)\notin R(F)$ would hold for each $e\in\mathcal{E}_{t_{0}%
}\setminus\{b_{t_{0}}\}$, then in view of Corollary \ref{cor:outer}(ii), $\{b_{t_{0}}\}$
would be a maximal $E(F)$ class, contrary to our hypothesis. As no element
different from $b_{t_{0}}$ was selected from $\mathcal{E}_{t_{0}}$, we have
$e\notin\{b_{t}\mid t\in T\}$, and hence $f(e)=F(e)$. Since $(b_{t_{0}},e)\in
R(F)$, repeating now the
previous argument, we obtain again $h(b_{t_{0}})\leq$ $\overline{\theta
}(f)(b_{t_{0}})$, i.e. $h(x)\leq\overline{\theta}(f)(x)$.

Hence for each $x\in U$ we obtained $h(x)\leq\overline{\theta}(f)(x)$. Thus
$h\leq\overline{\theta}(f)$. In view of \cite{DVCG}, this implies
$\overline{\theta}(h)\leq\overline{\theta}\left(  \overline{\theta}(f)\right)
=\overline{\theta}(f)$. Thus $\left(  \underline{\theta}(f),\overline{\theta
}(f)\right)  $ is the infimum of $(\underline{\theta}(f_{i}),\overline{\theta
}(f_{i}))$, $i\in I$. Since $f\in\mathcal{F}(U,L)$ has a finite range,
$\underline{\theta}(f),\overline{\theta}(f)\in\mathcal{F}(U,L)$ also have
finite ranges.

(ii) For any $f_{1},f_{2}\in\mathcal{F}_{fr}(U,L)$, $f_{1}\wedge
f_{2}$ has a finite range. By Lemma \ref{lem:frange}, as $\overline{\theta}(f_{1}%
),\overline{\theta}(f_{2})\in\mathcal{F}_{fr}(U,L)$, $\overline{\theta}%
(f_{1})\wedge\overline{\theta}(f_{2})$ also has a finite range. Applying now
(i) with $I=\{1,2\}$, we get that $(\mathcal{H},\leq)$ is a
$\wedge$-semilattice. Since condition (C) implies property (D), $(\mathcal{H},\leq)$ is self-dual, and hence it is a lattice.

(iii) If $U$ or $L$ is finite, then $ \theta$ and each $f\in\mathcal{F}(U,L)$ have
finite ranges, i.e. $\mathcal{F}(U,L)=$ $\mathcal{F}_{fr}(U,L)$. As for any
$f_{i}$ $\in\mathcal{F}(U,L)$, $i\in I$ we have $\bigwedge\limits_{i\in
I}f_{i}$, $\underset{i\in I}{%
{\textstyle\bigwedge}
}\overline{\theta}(f_{i})\in\mathcal{F}(U,L)$, the fuzzy sets $\bigwedge
\limits_{i\in I}f_{i}$ and $\underset{i\in I}{%
{\textstyle\bigwedge}
}\overline{\theta}(f_{i})$ also have finite ranges. Hence, in view of (i),
inf$\{(\underline{\theta}(f_{i}),\overline{\theta}(f_{i}))\mid
i\in I\}$ always exists, i.e. $(\mathcal{H}$,$\leq)$ is a complete $\wedge
$-semilattice. Since $(\mathcal{H},\leq)$ is self-dual, it is a complete
lattice. \end{proof}

\begin{remark}
\label{rem:infsup}
If for a system $f_{i}\in\mathcal{F}(U,L)$,
$i\in I$ we have
$\left(  \underline{\theta}(f),\overline{\theta}(f)\right) $ = $\left(  \bigwedge\{\underline{\theta}\left(  f_{i}\right)
\mid i\in I\},\bigwedge\{\overline{\theta}\left(  f_{i}\right)  \mid i\in
I\}\right) $, for
an $f\in\mathcal{F}(U,L)$, then $\left(  \underline{\theta}(f),\overline
{\theta}(f)\right)  $ equals to the infimum of $  \left(
\underline{\theta}(f_{i}),\overline{\theta}(f_{i})\right), i\in
I  $. Indeed, for any $h\in\mathcal{F}(U,L)$ with $(\underline
{\theta}(h),\overline{\theta}(h))\leq(\underline{\theta}(f_{i}),\overline
{\theta}(f_{i}))$, $i\in I$ we get $(\underline{\theta}(h),\overline{\theta}(h))\leq\left(
\underline{\theta}(f),\overline{\theta}(f)\right) $, meaning that $\left(
\underline{\theta}(f),\overline{\theta}(f)\right)  $ is the infimum of
$  \left(
\underline{\theta}(f_{i}),\overline{\theta}(f_{i})\right), i\in
I  $. Analogously, $\left(
{\textstyle\bigvee}
\{\underline{\theta}\left(  f_{i}\right)  \mid i\in I\},%
{\textstyle\bigvee}
\{\overline{\theta}\left(  f_{i}\right)  \mid i\in I\}\right) $ is the
supremum of 
$  \left(  \underline{\theta}(f_{i}),\overline{\theta
}(f_{i})\right), i\in I$ whenever $\left(
{\textstyle\bigvee}
\{\underline{\theta}\left(  f_{i}\right)  \mid i\in I\},%
{\textstyle\bigvee}
\{\overline{\theta}\left(  f_{i}\right)  \mid i\in I\}\right) \in \mathcal{FR}(U,L)$. 
\end{remark}

\begin{example}
\label{ex:notfrs}
Here we show how a meet $\left(  \underline{\theta}(f_{1}),\overline{\theta}(f_{1})\right)
\wedge\left(  \underline{\theta}(f_{2}),\overline{\theta}(f_{2})\right)  $ can be calculated by using construction (5) in the proof of Theorem \ref{thm:lattice}. The similarity relation $\theta $ is given on Figure \ref{fig:theta4}, and $L=\{0, 0.1, 0.25, 0.5, 0.75, 1 \} $. The fuzzy sets $f_{1},f_{2}$ and their approximations are given in Table \ref{tab:fuzzy3}.

\begin{figure}[H]
    \centering
    \begin{tikzpicture}

    \draw[dashed] (0, -1) -- (0, 1);
    \draw[dashed] (0, -1) -- (2, 0) -- (0, 1);

    \draw (-0.3, 1) node {$a$};
    \draw (-0.3, -1) node {$b$};
    \draw (2.3, 0) node {$c$};

    \draw (-0.45, 0) node {$0.75$};
    \draw (1.25, 0.75) node {$0.25$};
    \draw (1.25, -0.75) node {$0.25$};
    
    \draw[fill=white] (0, -1) circle [radius=2pt];
    \draw[fill=white] (0, 1) circle [radius=2pt];
    \draw[fill=white] (2, 0) circle [radius=2pt];
    
    \end{tikzpicture}
    \caption{The fuzzy similarity relation $\theta$ of Example \ref{ex:notfrs}}
    \label{fig:theta4}
\end{figure}

 \begin{table}[H]
    \centering
    \begin{tabular}{|P{1.5cm}|P{0.75cm}|P{0.75cm}|P{0.75cm}|}
        \hline
         $u$ & $a$ & $b$ & $c$  \\ \hline
         $f_1(u)$ & 1 & 0.1 & 0.5  \\ \hline
         $\overline{\theta}(f_1)(u)$ & 1 & 0.75 & 0.5  \\ \hline
         $\underline{\theta}(f_1)(u)$ & 0.25 & 0.1 & 0.5 \\ \hline
    \end{tabular}
    \hspace{0.75cm}
    \begin{tabular}{|P{1.5cm}|P{0.75cm}|P{0.75cm}|P{0.75cm}|}
        \hline
         $u$ & $a$ & $b$ & $c$  \\ \hline
         $f_2(u)$ & 0.1 & 1 & 0.5  \\ \hline
         $\overline{\theta}(f_2)(u)$ & 0.75 & 1 & 0.5  \\ \hline
         $\underline{\theta}(f_2)(u)$ & 0.1 & 0.25 & 0.5 \\ \hline
    \end{tabular}
    \caption{The fuzzy sets $f_1$ and $f_2$ of Example \ref{ex:notfrs} and their approximations}
    \label{tab:fuzzy3}
\end{table}

The corresponding fuzzy rough sets are represented in the form  $\alpha_1=
\begin{pmatrix}
1 & 0.75 & 0.5\\
0.25 & 0.1 & 0.5
\end{pmatrix}
$ and $\alpha_2=
\begin{pmatrix}
0.75 & 1 & 0.5\\
0.1 & 0.25 & 0.5
\end{pmatrix}
$, where the first row stands for the upper approximations and the second row shows their lower approximations. Computing the meets $F = \overline{\theta}(f_1) \wedge \overline{\theta}(f_2)$ and $G = \underline{\theta}(f_1) \wedge \underline{\theta}(f_2)$, we obtain the pair $\begin{pmatrix}
F \\
G
\end{pmatrix}=
\begin{pmatrix}
0.75 & 0.75 & 0.5\\
0.1 & 0.1 & 0.5
\end{pmatrix}
$, which is not a fuzzy rough set. The quasiorders induced by $F$ and $G$ are given in Figure \ref{fig:rhor}.

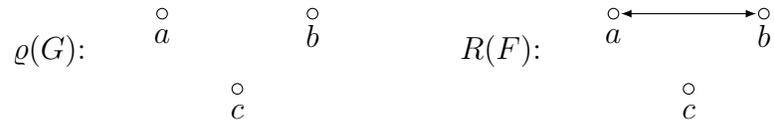
\begin{figure}[H]
    \centering
    \begin{tikzpicture}

    \draw[fill=white] (0, 0) circle [radius=2pt];
    \draw[fill=white] (1, 1) circle [radius=2pt];
    \draw[fill=white] (-1, 1) circle [radius=2pt];

    \draw (0, -0.3) node {$c$};
    \draw (1, 0.7) node {$b$};
    \draw (-1, 0.7) node {$a$};

    \draw (-2.5, 0.5) node {$\varrho(G)$:};

    \draw[latex-latex] (5.1, 1) -- (6.9, 1);

    \draw[fill=white] (6, 0) circle [radius=2pt];
    \draw[fill=white] (7, 1) circle [radius=2pt];
    \draw[fill=white] (5, 1) circle [radius=2pt];

    \draw (6, -0.3) node {$c$};
    \draw (7, 0.7) node {$b$};
    \draw (5, 0.7) node {$a$};
    
    \draw (3.5, 0.5) node {$R(F)$:};
    
    \end{tikzpicture}
    
    \caption{The quasiorders $\varrho(G)$ and $R(F)$.}
    \label{fig:rhor}
\end{figure}

Observe that each element is a maximal $\varepsilon (G)$ class. Hence, applying formula (5) from the proof of Theorem \ref{thm:lattice}, we obtain the reference set $ f:=G$, and as a corresponding fuzzy rough set $\begin{pmatrix}
0.25 & 0.25 & 0.5\\
0.1 & 0.1 & 0.5
\end{pmatrix}
$.
\end{example}

\begin{example}
\label{ex:distlattice}
Let us consider the similarity relation $\theta $ on Figure \ref{fig:theta3}, and set $L= \{0, 0.5, 1 \}$. The lattice $(\mathcal{H},\leq)$ of fuzzy rough sets is shown on Figure \ref{fig:lattice}.

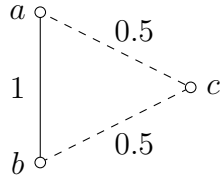
\begin{figure}[H]
    \centering
    \begin{tikzpicture}

    \draw (0, -1) -- (0, 1);
    \draw[dashed] (0, -1) -- (2, 0) -- (0, 1);

    \draw (-0.3, 1) node {$a$};
    \draw (-0.3, -1) node {$b$};
    \draw (2.3, 0) node {$c$};

    \draw (-0.3, 0) node {$1$};
    \draw (1.25, 0.75) node {$0.5$};
    \draw (1.25, -0.75) node {$0.5$};
    
    \draw[fill=white] (0, -1) circle [radius=2pt];
    \draw[fill=white] (0, 1) circle [radius=2pt];
    \draw[fill=white] (2, 0) circle [radius=2pt];
    
    \end{tikzpicture}
    \caption{The similarity relation $\theta$ of Example \ref{ex:distlattice}}
    \label{fig:theta3}
\end{figure}

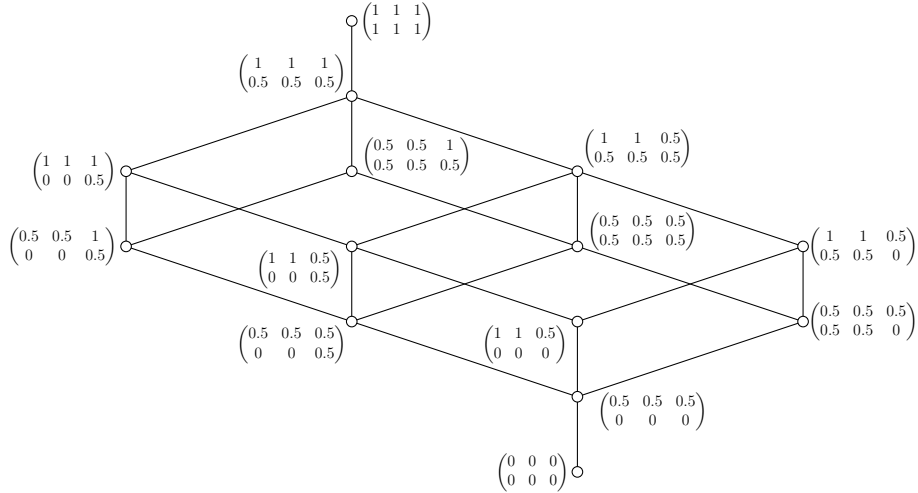
\begin{figure}[H]
    \centering
    \begin{tikzpicture}

    \draw (0, 0) -- (0, 1) -- (3, 2) -- (3, 3) -- (0, 4) -- (-3, 5) -- (-3, 6);
    \draw (0, 1) -- (-3, 2) -- (-6, 3) -- (-6, 4) -- (-3, 5);
    \draw (0, 1) -- (0, 2) -- (-3, 3) -- (-6, 4);
    \draw (3, 2) -- (0, 3) -- (-3, 4);
    \draw (0, 2) -- (3, 3);
    \draw (-3, 2) -- (0, 3);
    \draw (-3, 2) -- (-3, 3);
    \draw (-3, 3) -- (0, 4);
    \draw (0, 3) -- (0, 4);
    \draw (-6, 3) -- (-3, 4) -- (-3, 5);
    
    \draw[fill=white] (0, 0) circle [radius=2pt];
    \draw[fill=white] (0, 1) circle [radius=2pt];
    \draw[fill=white] (-3, 2) circle [radius=2pt];
    \draw[fill=white] (0, 2) circle [radius=2pt];
    \draw[fill=white] (3, 2) circle [radius=2pt];
    \draw[fill=white] (-6, 3) circle [radius=2pt];
    \draw[fill=white] (-3, 3) circle [radius=2pt];
    \draw[fill=white] (0, 3) circle [radius=2pt];
    \draw[fill=white] (3, 3) circle [radius=2pt];
    \draw[fill=white] (-6, 4) circle [radius=2pt];
    \draw[fill=white] (-3, 4) circle [radius=2pt];
    \draw[fill=white] (0, 4) circle [radius=2pt];
    \draw[fill=white] (-3, 5) circle [radius=2pt];
    \draw[fill=white] (-3, 6) circle [radius=2pt];

    \draw (-0.6, 0) node {\scalebox{0.5}{$\begin{pmatrix} 0 & 0 & 0 \\ 0 & 0 & 0 \end{pmatrix} $}};
    \draw (1, 0.8) node {\scalebox{0.5}{$\begin{pmatrix}  0.5 & 0.5 & 0.5 \\ 0 & 0 & 0 \end{pmatrix} $}};
    \draw (3.8, 2) node {\scalebox{0.5}{$\begin{pmatrix} 0.5 & 0.5 & 0.5 \\ 0.5 & 0.5 & 0 \end{pmatrix} $}};
    \draw (3.8, 3) node {\scalebox{0.5}{$\begin{pmatrix} 1 & 1 & 0.5 \\ 0.5 & 0.5 & 0 \end{pmatrix} $}};
    \draw (0.8, 4.3) node {\scalebox{0.5}{$\begin{pmatrix} 1 & 1 & 0.5 \\ 0.5 & 0.5 & 0.5 \end{pmatrix} $}};
    \draw (-3.8, 5.3) node {\scalebox{0.5}{$\begin{pmatrix} 1 & 1 & 1 \\ 0.5 & 0.5 & 0.5 \end{pmatrix} $}};
    \draw (-2.4, 6) node {\scalebox{0.5}{$\begin{pmatrix} 1 & 1 & 1 \\ 1 & 1 & 1 \end{pmatrix} $}};

    \draw (-3.8, 1.7) node {\scalebox{0.5}{$\begin{pmatrix} 0.5 & 0.5 & 0.5 \\ 0 & 0 & 0.5 \end{pmatrix} $}};
    \draw (-6.85, 3) node {\scalebox{0.5}{$\begin{pmatrix} 0.5 & 0.5 & 1 \\ 0 & 0 & 0.5 \end{pmatrix} $}};
    \draw (-6.7, 4) node {\scalebox{0.5}{$\begin{pmatrix} 1 & 1 & 1 \\ 0 & 0 & 0.5 \end{pmatrix} $}};

    \draw (-0.7, 1.7) node {\scalebox{0.5}{$\begin{pmatrix} 1 & 1 & 0.5 \\ 0 & 0 & 0 \end{pmatrix} $}};
    \draw (0.9, 3.2) node {\scalebox{0.5}{$\begin{pmatrix} 0.5 & 0.5 & 0.5 \\ 0.5 & 0.5 & 0.5 \end{pmatrix}$ }};

    \draw (-3.7, 2.7) node {\scalebox{0.5}{$\begin{pmatrix} 1 & 1 & 0.5 \\ 0 & 0 & 0.5 \end{pmatrix} $}};
    \draw (-2.1, 4.2) node {\scalebox{0.5}{$\begin{pmatrix} 0.5 & 0.5 & 1 \\ 0.5 & 0.5 & 0.5 \end{pmatrix}$ }};
    
    \end{tikzpicture}
    \caption{The lattice of fuzzy rough sets for Example \ref{ex:distlattice}}
    \label{fig:lattice}
\end{figure}
\end{example}

\section*{Conclusions}
\label{sec:conc}
The properties of a poset formed by fuzzy rough sets depend strongly
both on the framework in which the approximations
are defined (t-norm $\odot$ - implicator $\vartriangleright$), and on the properties of the approximation space $(U,\theta)$.

The majority of our arguments work only under some
finiteness conditions imposed on the domain or range of the fuzzy reference
sets and of the relation $\theta$. We hope that these conditions can be
replaced with weaker ones (see e.g. \cite{St}) or with
conditions related to some topology defined on $U$.

In case of a finite universe or range set
$L$, we were able to show that $(\mathcal{FR}(U,L),\leq)$
is a lattice only for a similarity relation $\theta$ in a
particular context (min t-norm and S-implicator), by using property (D). It would be interesting to check if the
proof can be extended for fuzzy quasiorders or other types of relations.
Theorem \ref{thm:frs} seems to suggest that such a result can be obtained
even in a general context (of a t-norm and a related implicator) for
a t-similarity relation $\theta$ with some (strong) particular properties, even in the absence of the property (D). This can serve as a further research goal.

Even in conditions of Theorem \ref{thm:lattice}, the lattices formed by fuzzy
rough sets are not distributive in general - this is shown in Example \ref{ex:notdistr}
below. Hence an interesting question could be if these lattices have any
characteristic common properties. We can see that for some particular approximation spaces as in Example \ref{ex:notfrs}, we even obtain a particular
distributive lattice (a so-called double Stone lattice). Therefore, it makes
sense to ask under what conditions imposed on $(U,\theta)$ will we obtain a
distributive lattice $\mathcal{FR}(U,L)$.

\begin{example}
\label{ex:notdistr}
Let $U$, $L$, the similarity relation $\theta$
be as in Example \ref{ex:distlattice}, and let us consider the fuzzy rough sets $\alpha_{1},\alpha_{2}$ from
\ref{ex:distlattice} and $c=%
\begin{pmatrix}
0.5 & 0.5 & 0.5\\
0.5 & 0.5 & 0.5
\end{pmatrix}
$. We prove that $(\alpha_{1}\wedge\alpha_{2})\vee c\neq(\alpha_{1}\vee
c)\wedge(\alpha_{2}\vee c)$: 

\smallskip

Indeed, by Example \ref{ex:distlattice}, $\alpha_{1}\wedge\alpha_{2}=%
\begin{pmatrix}
0.25 & 0.25 & 0.5\\
0.1 & 0.1 & 0.5
\end{pmatrix}
<c$, and hence $(\alpha_{1}\wedge\alpha_{2})\vee c=c$. In view of Remark \ref{rem:infsup} we
have 
\smallskip
$\alpha_{1}\vee c=%
\begin{pmatrix}
1 & 0.75 & 0.5\\
0.5 & 0.5 & 0.5
\end{pmatrix}
$ and $\alpha_{2}\vee c=%
\begin{pmatrix}
0.75 & 1 & 0.5\\
0.5 & 0.5 & 0.5
\end{pmatrix}
$, because
$\alpha_{1}\vee c=%
\begin{pmatrix}
\overline{\theta}(h_{1})\\
\underline{\theta}(h_{1})
\end{pmatrix}
$ and $\alpha_{2}\vee c=%
\begin{pmatrix}
\overline{\theta}(h_{2})\\
\underline{\theta}(h_{2})
\end{pmatrix}
$, where $h_{1} = 1/a + 0.5/b + 0.5/c $ and  $h_{2} = 0.5/a + 1/b + 0.5/c $.

\medskip
Now, observe that $%
\begin{pmatrix}
\overline{\theta}(h_{1})\wedge\overline{\theta}(h_{2})\\
\underline{\theta}(h_{1})\wedge\underline{\theta}(h_{2})
\end{pmatrix}
=%
\begin{pmatrix}
0.75 & 0.75 & 0.5\\
0.5 & 0.5 & 0.5
\end{pmatrix}
$ is a fuzzy rough set induced by the fuzzy set $m = 0.75/a + 0.5/b + 0.5/c $.
In view of Remark \ref{rem:infsup} this means that $(\alpha_{1}\vee c)\wedge(\alpha
_{2}\vee c)=%
\begin{pmatrix}
0.75 & 0.75 & 0.5\\
0.5 & 0.5 & 0.5
\end{pmatrix}
\neq c$.
\end{example}


\bibliography{rough}
\bibliographystyle{elsarticle-harv}

\end{document}